\newtheorem{definition}{Definition}[section]
\newtheorem{theorem}[definition]{Theorem}
\newtheorem{proposition}[definition]{Proposition}
\newtheorem{lemma}[definition]{Lemma}
\newtheorem{corollary}[definition]{Corollary}
\newtheorem{remark}[definition]{Remark}
\newcommand{\nd}{\noindent}
\newcommand{\dG}{{\mathds G}}
\newcommand{\dV}{{\mathds V}}
\newcommand{\dR}{{\mathds R}}
\newcommand{\dC}{{\mathds C}}
\newcommand{\dQ}{{\mathds Q}}
\newcommand{\dN}{{\mathds N}}
\newcommand{\dZ}{{\mathds Z}}
\newcommand{\dP}{{\mathds P}}
\newcommand{\dA}{{\mathbb A}}
\newcommand{\dD}{{\mathds D}}
\newcommand{\dE}{{\mathds E}}
\newcommand{\bDelta}{\mathbf{\Delta}}
\newcommand{\kk}{k}
\newcommand{\op}{\mathrm{op}}
\newcommand{\Mod}[2]{\mathrm{Mod}(#1_{#2})}
\newcommand{\Modrc}[2]{\mathrm{Mod}_{\rc}(#1_{#2})}
\newcommand{\Modc}[2]{\mathrm{Mod}^\mathrm{c}(#1_{#2})}
\newcommand{\Ind}[2]{\mathrm{I}(#1_{#2})}
\newcommand{\Db}[2]{\mathrm{D}^\mathrm{b}(#1_{#2})}
\newcommand{\DbL}[1]{\mathrm{D}^\mathrm{b}(L_{#1})}
\newcommand{\DbI}[2]{\mathrm{D}^\mathrm{b}(\mathrm{I}#1_{#2})}
\newcommand{\EbI}[2]{\mathrm{E}^\mathrm{b}(\mathrm{I}#1_{#2})}
\newcommand{\EbRcI}[2]{\mathrm{E}^\mathrm{b}_{\rc}(\mathrm{I}#1_{#2})}
\newcommand{\EbIC}[1]{\mathrm{E}^\mathrm{b}(\mathrm{I}\dC_{#1})}
\newcommand{\DbIL}[1]{\mathrm{D}^\mathrm{b}(\mathrm{I}L_{#1})}
\newcommand{\EbIL}[1]{\mathrm{E}^\mathrm{b}(\mathrm{I}L_{#1})}
\newcommand{\sP}{\mathsf{P}}
\newcommand{\RR}{\mathrm{R}}
\newcommand{\EE}{\mathrm{E}}
\newcommand{\Dbhol}[1]{\mathrm{D}^\mathrm{b}_\mathrm{hol}(\mathcal{D}_{#1})}
\newcommand{\SolE}[1]{\mathcal{S}ol^\mathrm{E}_{#1}}
\newcommand{\Gmu}{\mathds{G}_{m,u}}
\newcommand{\Sol}[1]{\mathcal{S}ol_{#1}}
\newcommand{\DE}[1]{\mathrm{D}^\mathrm{E}_{#1}}
\newcommand{\cA}{\mathcal{A}}
\newcommand{\cD}{\mathcal{D}}
\newcommand{\cE}{\mathcal{E}}
\newcommand{\cF}{\mathcal{F}}
\newcommand{\cG}{\mathcal{G}}
\newcommand{\cH}{\mathcal{H}}
\newcommand{\cI}{\mathcal{I}}
\newcommand{\cL}{\mathcal{L}}
\newcommand{\cM}{\mathcal{M}}
\newcommand{\cN}{\mathcal{N}}
\newcommand{\cO}{\mathcal{O}}
\newcommand{\cP}{\mathcal{P}}
\newcommand{\cR}{\mathcal{R}}
\newcommand{\cS}{\mathcal{S}}
\newcommand{\cX}{\mathcal{X}}
\newcommand{\cY}{\mathcal{Y}}
\newcommand{\fF}{\mathfrak{F}}
\newcommand{\D}{\displaystyle}
\DeclareMathOperator{\Spec}{\textup{Spec}\,}
\DeclareMathOperator{\Aut}{\textup{Aut}}
\DeclareMathOperator{\DR}{\mathit{DR}}
\DeclareMathOperator{\im}{\textup{im}}
\DeclareMathOperator{\FL}{\textup{FL}}
\DeclareMathOperator{\id}{\textup{id}}
\DeclareMathOperator{\Conv}{\textup{Conv}}
\newcommand{\Supp}{\textup{Supp}}
\newcommand{\Gal}{\textup{Gal}}
\newcommand{\Gm}{\dG_{m,q}}
\newcommand{\indlim}[1]{\mathop{``\varinjlim"}\limits_{#1}}
\long\def\inhibe#1\endinhibe{\relax}
\newcommand{\wt}{\widetilde}
\newcommand{\conv}{\mathbin{\overset{+}{\otimes}}}
\newcommand{\Ihomp}{\cI hom^+}
\newcommand{\Ihom}{\cI hom}
\newcommand{\rc}{\dR\textnormal{-c}}
\newcommand{\real}{\mathop{\mathrm{Re}}}
\begin{document}
\title{Betti structures of hypergeometric equations}
\author{Davide Barco, Marco Hien, Andreas Hohl and Christian Sevenheck}

\renewcommand{\thefootnote}{}
\footnotetext{The first and fourth authors are partially supported by the DFG project SE 1114/3-2. The second author was partially supported by the DFG research fellowship HO 6925/1-1.\\ \noindent 2010 \emph{Mathematics Subject Classification.} 32C38, 14F10, 32S40\\ Keywords: Irregular Riemann-Hilbert correspondence, enhanced ind-sheaves, hypergeometric D-modules}
\renewcommand{\thefootnote}{\arabic{footnote}}

\maketitle

\begin{abstract}
We study Betti structures in the solution complexes of confluent hypergeometric equations. We use the framework of enhanced ind-sheaves and the irregular Riemann-Hilbert correspondence of D'Agnolo--Kashiwara. The main result is a group theoretic criterion that ensures that enhanced solutions of such systems are defined
over certain subfields of $\dC$. The proof uses a description of the hypergeometric systems as exponentially twisted Gau\ss--Manin systems of certain Laurent polynomials.
\end{abstract}

\section{Introduction}
\label{sec:Introduction}

The aim of this paper is to study Betti structures in solutions of certain univariate hypergeometric $\cD$-modules. These differential systems have a long history, starting from work of  Euler and Gau\ss. In modern language, they are the most basic examples of rigid $\cD$-modules. Recent applications include quantum differential equations of toric manifolds (\cite{Giv7}, \cite{Ir2,Ir-Periods} or \cite{ReiSe, ReiSe2}) and an in-depth study of the Hodge theoretic properties of such systems (\cite{Sa15, SevCast, SevCastReich, Sa16}). From the Hodge theoretic point of view, but also for many other applications, it is important to find subfields of the complex numbers over which the solutions of these equations can be defined. This also fits to the more general program to understand Betti structures in  holonomic $\cD$-modules, see e.g.\ \cite{Mo7}. A first example of a result in this direction is a theorem of Fedorov (\cite[Theorem 2]{Fedorov}), which was conjectured by Corti and Golyshev (\cite{CortiGolyshev})
and which gives arithmetic conditions on the exponents for a \emph{regular} (i.e. non-confluent) hypergeometric system to underly a \emph{real} variation of polarized Hodge structures. The rigidity property of these systems is used in an essential way in his argument.

In the present paper, we take up the question of the existence of Betti structures in the solution spaces of univariate hypergeometric differential equations. We replace Fedorov's proof by a geometric argument, linking these $\cD$-modules to Gau\ss--Manin systems of certain Laurent polynomials. Our approach is distinct from Fedorov's in at least two ways:
\begin{enumerate}
\item We consider more generally not necessarily regular hypergeometric equations (these irregular ones are usually called \emph{confluent}) using the theory of enhanced solutions
of D'Agnolo--Kashiwara (\cite{DK16}).
\item We discuss a more general setup, where we consider any finite Galois extension $L/K$ such that the solutions of the given hypergeometric systems are a priori defined over $L$ (e.g.\ if all exponents of the system are rational, then $L$ is a cyclotomic field), and we establish and prove a group theoretic criterion for them to be defined over $K$.
\end{enumerate}
As an application, we get a general criterion for the enhanced solutions to be defined over the real numbers (similar in shape to the one from \cite{Fedorov}), and, if all exponents are rational, we determine when the solutions are defined over $\dQ$. One can deduce in a rather straightforward way similar statements for the perverse sheaf of (classical) solutions, and we obtain in particular, in the non-confluent case, a new proof of Fedorov's result.

The paper is organized as follows:
We first discuss in Section \ref{sec:BettiConj} the following question: Given a Galois extension $L/K$ and an enhanced ind-sheaf defined over $L$, what are criteria to ensure that it comes from (i.e.\ is obtained by extension of scalars from) an object defined over $K$? This is applied in Section \ref{sec:Laurent}, where we introduce hypergeometric modules and prove a geometric realization of them, relying on earlier work of Schulze--Walther (\cite{SchulWalth2}) and Reichelt (\cite{Reich2}). Our main result is then Theorem \ref{theo:MainTheo2}, which gives the group theoretic criterion alluded to above. In order to illustrate this result, we formulate here a shortened version, which covers the cases we are mostly interested in.
\begin{theorem}\label{theo:MainTheoIntro}
Let numbers $\alpha_1,\ldots,\alpha_n,\beta_1,\ldots,\beta_m \in [0,1)$ be given,
where $n\geq m$ and where we assume that $\alpha_i\neq \beta_j$ for all $i\in\{1,\ldots,n\}$ and all $j\in \{1,\ldots,m\}$. Consider the differential operator in one variable
$$
P:=\prod_{i=1}^{n}\left(q\partial_q-\alpha_i\right)-q\cdot \prod_{j=1}^{m}\left(q\partial_q-\beta_j\right)
$$
and let $\cH(\alpha;\beta):=\cD_{\Gm}/\cD_{\Gm} \cdot P \in \mathrm{Mod}_\mathrm{hol}(\cD_{\Gm})$ be the corresponding irreducible hypergeometric module on the one-dimensional torus $\Gm$. Suppose that $L\subset \dC$ is a field such that $e^{2\pi i \alpha_i}$, $e^{2\pi i \beta_j}\in L$ for all $i,j$. Let $K\subset L$ be such that $L/K$ is a finite Galois extension. Then if $\Gal(L/K)$ induces actions on
$\{e^{2\pi i\alpha_1},\ldots,e^{2\pi i\alpha_n}\}$ and on $\{e^{2\pi i\beta_1},\ldots,e^{2\pi i\beta_m}\}$,
the enhanced ind-sheaf $\SolE{\Gm}(\cH(\alpha;\beta))$ (see section \ref{sectionRH} below) is defined over $K$, that is, comes from an enhanced ind-sheaf over $K$ by extension of scalars.\end{theorem}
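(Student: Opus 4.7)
The plan is to combine the geometric realization of the hypergeometric $\cD$-module $\cH(\alpha;\beta)$ established in Section \ref{sec:Laurent} (following Schulze--Walther and Reichelt) with the Galois descent criterion developed in Section \ref{sec:BettiConj}. Concretely, up to an elementary twist one realizes $\cH(\alpha;\beta)$ as an exponentially twisted Gau\ss--Manin system
\[
\pi_+\Bigl(\cO\cdot e^f\otimes \bigotimes_{i=1}^n p_i^+\cK_{\alpha_i}\otimes\bigotimes_{j=1}^m q_j^+\cK_{\beta_j}\Bigr),
\]
where $\pi,p_i,q_j$ are morphisms of algebraic tori defined over $\dQ$, $f$ is a Laurent polynomial with rational coefficients which is symmetric separately in the $x$-block and in the $y$-block of its variables, and $\cK_c$ denotes the rank-one Kummer $\cD$-module on $\Gm$ with monodromy $e^{2\pi i c}$. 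Applying $\SolE{\Gm}$ and invoking the compatibility of this functor with direct and inverse images and tensor products (D'Agnolo--Kashiwara) yields an analogous presentation of the enhanced solution complex as a pushforward of the enhanced exponential ind-sheaf attached to $f$, tensored with pullbacks of the enhanced Kummer ind-sheaves $L_{\alpha_i}, L_{\beta_j}$.

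Next I would analyze how this presentation transforms under $\sigma\in\Gal(L/K)$. Since the ambient spaces, the morphisms $\pi,p_i,q_j$ and the polynomial $f$ are all defined over $\dQ$, they are $\sigma$-invariant, and so is the enhanced exponential ind-sheaf attached to $f$. The ind-sheaf $L_c$ depends functorially on the monodromy value $c$, providing canonical identifications $\sigma^*L_c\cong L_{\sigma(c)}$. By the hypothesis on $\Gal(L/K)$, $\sigma$ induces permutations $\sigma_\alpha\in S_n$ and $\sigma_\beta\in S_m$ of the indices. These permutations lift to coordinate-permutation automorphisms of the source of $\pi$ which leave $f$ invariant and commute with $\pi$. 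Composing the Kummer identifications with the pullback along such an automorphism gives, for each $\sigma$, a canonical isomorphism $\varphi_\sigma\colon \sigma^*\SolE{\Gm}(\cH(\alpha;\beta))\xrightarrow{\sim}\SolE{\Gm}(\cH(\alpha;\beta))$.

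It then remains to check that the collection $\{\varphi_\sigma\}$ satisfies the cocycle identity $\tau^*\varphi_\sigma\circ\varphi_\tau=\varphi_{\sigma\tau}$, assembling into a Galois descent datum. Feeding this datum into the descent criterion of Section \ref{sec:BettiConj} will then produce an enhanced ind-sheaf over $K$ whose extension of scalars to $\dC$ recovers $\SolE{\Gm}(\cH(\alpha;\beta))$.

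The main technical obstacle is precisely this cocycle verification, complicated by possible multiplicities among the exponents. When the $e^{2\pi i\alpha_i}$ (and the $e^{2\pi i\beta_j}$) are pairwise distinct, the permutations $\sigma_\alpha,\sigma_\beta$ are uniquely determined by $\sigma$, and the cocycle identity reduces to the functoriality of $c\mapsto L_c$ together with the canonical symmetric structure of the product torus. In the presence of multiplicities the permutation $\sigma_\alpha$ is only determined up to its stabilizer, and one must verify that this ambiguity is absorbed by automorphisms of the Gau\ss--Manin presentation that act trivially after $\pi_+$. Making this bookkeeping precise---keeping track of the natural transformations arising from the six-functor compatibilities of $\SolE{\Gm}$, of the functoriality of the Kummer construction, and of the coordinate-permutation action on the source of $\pi$---is the technical heart of Theorem \ref{theo:MainTheo2}, and the group-theoretic hypothesis enters exactly at this point.
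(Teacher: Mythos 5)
Your overall strategy---realizing the hypergeometric module as an exponentially twisted Gau\ss--Manin system and then applying Galois descent for enhanced ind-sheaves---is the paper's strategy, and your discussion of the Galois action on the Kummer data together with $f$-invariant coordinate permutations is exactly the content of $G$-goodness (Definition \ref{def:Ggood}) and Proposition \ref{prop:ConjProperty}. (One smaller imprecision: descent via Proposition \ref{prop:KStructure} applies to $\dR$-constructible objects in the heart of the standard t-structure over $L$, so one must first pass to the $L$-lattice $\cF^\gamma$ of $\Sol{\dG}(\cO^\gamma_\dG)$ and use Lemma \ref{lemmaMochizukiDegreeZero} to get concentration in degree zero, rather than ``conjugating'' the $\dC$-object $\SolE{(\Gm)_\infty}(\cH(\alpha;\beta))$ directly; also, the cocycle verification is not really the technical heart---for permutation actions it is essentially immediate.)

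The genuine gap is at your first step. The geometric realization (Proposition \ref{prop:HypGMSystem}) exists only under the additional hypotheses $n>m$ and $\alpha_1=0$: the strict inequality is used essentially (e.g.\ in the non-characteristicity/smoothness argument on $(\dG_m^N)^\vee$), and a vanishing exponent upstairs is needed to write the module as $\kappa^+p_+\cE^{\gamma,f}$. Theorem \ref{theo:MainTheoIntro} is stated for $n\geq m$ and arbitrary $\alpha_i,\beta_j\in[0,1)$, so ``up to an elementary twist'' does not suffice: in the regular case $n=m$, or when no $\alpha_i$ vanishes, or when some $\beta_j=0$, the module $\cH(\alpha;\beta)$ is not of this form and your argument never gets started. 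The paper bridges this by two reduction steps your proposal omits. First (Corollary \ref{cor:CaseNonZero}), when all $\beta_j\neq 0$ one uses Katz's identity \eqref{eq:FLTrafoKatz} relating $\cH(\alpha;\beta)$ to the auxiliary module $\cH(0,\alpha;\beta)$ (to which Theorem \ref{theo:MainTheo} applies); transferring the $K$-structure back requires the exact sequence \eqref{eq:SplitQ}, a quiver argument in $\Perv_K(\Delta_r,0)$ showing that the solution perverse sheaf of $\cD_{\dA^1}/\cD_{\dA^1}Q$ descends, and a gluing argument with distinguished triangles to upgrade this to a $K$-structure on enhanced solutions before applying the enhanced Fourier--Laplace functors (which preserve $K$-structures by Lemma \ref{lemmaExtensionCompatibility}). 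Second (Theorem \ref{theo:MainTheo2}), vanishing $\beta_j$'s are handled by induction on their number via Katz's second identity $\cH(\alpha;\beta)=\kappa^+\mathrm{inv}_+j^+\FL(j_+\cH')$. Without these steps your argument proves only the special case $n>m$, $\alpha_1=0$, not the theorem as stated; the actual proof of Theorem \ref{theo:MainTheoIntro} in the paper is then just the verification that the hypothesis on $\Gal(L/K)$ yields $G$-goodness of $\gamma=(\beta_{r+1},\ldots,\beta_m,\alpha_1,\ldots,\alpha_n)$ and $G$-invariance of $f$, followed by an appeal to Theorem \ref{theo:MainTheo2}.
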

The proof of this theorem will be given right after the proof of Theorem \ref{theo:MainTheo2} on page \pageref{page:proofMainTheoIntro}.
As an example (see Theorem \ref{theo:RealStruct})
the above criterion applies if those of the numbers $\alpha_i$ and $\beta_j$ which are non-zero are symmetric around $\frac{1}{2}$, in which case (taking $L=\dC$ and $K=\dR$) we obtain that the enhanced solutions of the system $\cH(\alpha;\beta)$ are defined over $\dR$. A similar reasoning leads to a criterion (Theorem \ref{theo:RatStruct}) showing the existence of rational structures.
Finally, in  Section \ref{sec:Stokes}, we draw some consequences for Stokes matrices associated to the irregular singular point of confluent hypergeometric equations.\vspace{0.3cm}

\textbf{Acknowledgements.} We are grateful to Takuro Mochizuki for some explanations about his work on Betti structures and enhanced ind-sheaves, and for pointing out some ideas on the proofs in Section~\ref{sec:Stokes}. We also thank Andrea D'Agnolo for useful correspondence during the preparation of this work.

Finally, we thank the anonymous referees for their very careful reading of the paper, and for suggesting a number of improvements to the text.

\section{Betti structures and enhanced ind-sheaves}
\label{sec:BettiConj}

The functors of complex conjugation on the category of complex vector spaces and complexification of real vector spaces are well-known. In this chapter, we develop a theory of Galois conjugation and extension of scalars for more general field extensions $L/K$ in the context of sheaves, ind-sheaves and enhanced ind-sheaves. Since the latter are the topological counterpart of holonomic $\cD$-modules via the irregular Riemann--Hilbert correspondence, this will produce a framework for studying the question of when solutions of a differential system (a priori defined over $\dC$) admit a structure over a subfield $K$ of $\dC$. In this case, we will say that the differential system carries a $K$-Betti structure (or $K$-structure for short, see Definition \ref{def:KStructure} below).
In particular, we will prove that in certain cases, the fact that an object over $L$ is isomorphic to all its Galois conjugates (in a compatible way) implies that this object already comes from an object defined over $K$.

\subsection{Enhanced ind-sheaves}\label{sect:enhanced}
In \cite{DK16}, the authors introduced the category of enhanced ind-sheaves, which we will briefly recall here.
We assume all topological spaces to be \emph{good} in this section (i.e.\ Hausdorff, locally compact, second countable and of finite flabby dimension).

Let $k$ be an arbitrary field, and let $X$ be a topological space. We denote by $\Mod{\kk}{X}$ the category of sheaves of $\kk$-vector spaces and by $\Db{\kk}{X}$ its bounded derived category with the six Grothendieck operations $\RR\cH om_{\kk_X}$, $\otimes$, $\RR f_*$, $f^{-1}$, $\RR f_!$ and $f^!$.

The category $\Ind{\kk}{X}$ of \emph{ind-sheaves} on $X$ is the category of ind-objects for $\Modc{\kk}{X}$, the category of compactly supported sheaves of $\kk$-vector spaces on $X$ (see \cite{KS01} and \cite{KS06} for the theory of ind-sheaves and -objects). In other words, an object $F\in\Ind{\kk}{X}$ is of the form
$$F=\indlim{i\in I} \cF_i,$$
where the $\cF_i\in\Modc{\kk}{X}$ form a small filtrant inductive system and $\indlim{}$ denotes\vspace{-0.3cm} the inductive limit in the category of functors $\Modc{\kk}{X}^\op\to \mathrm{Mod}(\dZ)$ (i.e.\ one considers the sheaves $\cF_i$ after Yoneda embedding).

There is a fully faithful and exact embedding from the category of (not necessarily compactly supported) sheaves into the category of ind-sheaves
$\iota_X\colon \Mod{\kk}{X}\hookrightarrow \Ind{\kk}{X}$. If there is no confusion, the functor $\iota_X$ will often be omitted in the notation. This embedding has an exact left adjoint, denoted by $\alpha_X$, which in turn has an exact left adjoint $\beta_X$.  Moreover, one has a formalism of six Grothendieck operations on $\Ind{\kk}{X}$, where one denotes the proper direct image by $\RR f_{!!}$ to distinguish it from the operation $\RR f_!$ for sheaves, since it is not compatible with $\iota_X$. The derived category of $\Ind{\kk}{X}$ is denoted by $\DbI{\kk}{X}$.

Now, let $\cX=(X,\widehat{X})$ be a bordered space, i.e.\ a pair of topological spaces such that $X\subseteq \widehat{X}$ is an open subset. One defines the category of enhanced ind-sheaves on $\cX$ by two successive quotients (we refer to \cite{DK16} and \cite{EnhPerv} for details on this construction):
Denote by $\sP=\mathrm{P}^1(\dR)$ the real projective line and define the bordered space $\dR_\infty\vcentcolon=(\dR,\sP)$. Then
$$\DbI{\kk}{\cX\times\dR_\infty}\vcentcolon= \DbI{\kk}{\widehat{X}\times\sP}/\DbI{\kk}{(\widehat{X}\times\sP)\setminus (X\times\dR)},\qquad \DbI{\kk}{\cX}\vcentcolon= \DbI{\kk}{\widehat{X}}/\DbI{\kk}{\widehat{X}\setminus X}$$
and
$$\EbI{\kk}{\cX}\vcentcolon= \DbI{\kk}{\cX\times\dR_\infty}/\pi^{-1}\DbI{\kk}{\cX},$$
where $\pi\colon \cX\times\dR_\infty\to \cX$ is the morphism of bordered spaces induced by the projection. The total quotient functor $Q\colon \DbI{\kk}{\widehat{X}\times\sP}\to \EbI{\kk}{\cX}$ has a fully faithful left adjoint, which we will denote by $\cL$ (it is denoted by $\RR j_{\cX\times\dR_\infty !!}\mathrm{L}^\EE$ in \cite{EnhPerv}).

The category of enhanced ind-sheaves still comes with the six-functor formalism and these operations are denoted by $\RR\Ihomp$, $\conv$, $\EE f_*$, $\EE f^{-1}$, $\EE f_{!!}$ and $\EE f^!$ (for a morphism $f$ of bordered spaces). One also has a duality functor, denoted by $\DE{\cX}$. In addition, for an object $\cF\in \Db{\kk}{X}$, one has the operation
\begin{equation}\label{eq:pitens}
\EbI{\kk}{\cX}\to \EbI{\kk}{\cX}, \quad H\mapsto \pi^{-1}\cF \otimes H
\end{equation}
induced by the tensor product on $\DbI{\kk}{\widehat{X}\times\sP}$. We will in particular abbreviate $H_V\vcentcolon=\pi^{-1}k_V\otimes H$ for a subset $V\subset X$, and we recall that $H_V\cong \mathrm{E}i_{V_\infty!!}\mathrm{E}i_{V_\infty}^{-1}H$, where $i_{V_\infty}\colon V_\infty=(V,\overline{V})\to \cX$ is the embedding and $\overline{V}$ denotes the closure of $V$ in $\widehat{X}$) (see \cite[Lemma 2.7.6]{EnhPerv}).

We will often encounter the objects
\begin{equation}\label{eq:DefExp}
\kk^\EE_\cX\vcentcolon= \indlim{a\to\infty} \kk_{\{t\geq a\}}\in\EbI{\kk}{\cX}\qquad \textnormal{and} \qquad \dE^{\phi}_\kk\vcentcolon= \kk^\EE_\cX\conv \kk_{\{t+\phi\geq 0\}}\cong \indlim{a\to\infty}\kk_{\{t+\phi \geq a\}}\in\EbI{\kk}{\cX},
\end{equation}
where $\{t\geq a\}\vcentcolon= \{(x,t)\in\widehat{X}\times\sP;x\in X, t\in \dR, t\geq a\}$, and for a continuous function $\phi\colon U\to \dR$ on an open subset $U\subseteq X$ we set $\{t+\phi\geq a\}\vcentcolon= \{(x,t)\in\widehat{X}\times\sP; x\in U, t\in \dR, t+\phi(x)\geq a\}$.

Moreover, for each $H\in\EbI{\kk}{\cX}$, we set
\begin{equation}\label{eq:Sh}
\mathsf{sh}_{\cX}(H)=
\alpha_{X} j^{-1} \mathrm{R}\overline{\pi}_* \mathrm{R}\Ihom (\kk_{\{t\geq 0\}}\oplus \kk_{\{t\leq 0\}},\mathcal{L}H)\in \Db{\kk}{X},
\end{equation}
where $j\colon X\hookrightarrow \widehat{X}$ is the embedding and $\overline{\pi}\colon \widehat{X}\times\mathsf{P}\to \widehat{X}$ is the projection. One calls $\mathsf{sh}_\cX$ the \emph{sheafification functor} for enhanced ind-sheaves on the bordered space (see \cite{DKsheafification} for a detailed study).

On a real analytic manifold $X$, one has the notions of $\dR$-constructible sheaves and subanalytic ind-sheaves on $\widehat{X}$ (see \cite{KS90} and \cite{KS01}, where subanalytic ind-sheaves were called ``ind-$\dR$-constructible ind-sheaves''). This gives the full subcategories $\Modrc{k}{X}\subset\Mod{k}{X}$, $\mathrm{D}^{\mathrm{b}}_{\rc}(k_{X})\subset\Db{k}{X}$ and $\mathrm{I}_{\mathrm{suban}}(k_{X})$, $\mathrm{D}^{\mathrm{b}}_{\mathrm{suban}}(\mathrm{I}k_X)$.
In the case where $\cX$ is a real analytic bordered space (i.e.\ $\widehat{X}$ is a real analytic manifold and $X\subset \widehat{X}$ is a subanalytic subset), the full subcategory $\EbRcI{\kk}{\cX}\subset\EbI{\kk}{\cX}$ consisting  of $\dR$-constructible enhanced ind-sheaves was introduced in \cite{DK16, EnhPerv}.

The \emph{standard t-structure} (cf.\ \cite[§3.4]{DK16}, \cite{EnhPerv}) on $\EbI{\kk}{\cX}$ is the one induced by the standard t-structure on the derived category $\DbI{\kk}{\widehat{X}\times\sP}$, i.e.
\begin{align*}
\mathrm{E}^{\leq n}(\mathrm{I}\kk_\cX)=\{ H\in\EbI{\kk}{\cX}; \cL(H)\in \mathrm{D}^{\leq n}(\mathrm{I}\kk_{\widehat{X}\times\sP}) \}\\
\mathrm{E}^{\geq n}(\mathrm{I}\kk_\cX)=\{ H\in\EbI{\kk}{\cX}; \cL(H)\in \mathrm{D}^{\geq n}(\mathrm{I}\kk_{\widehat{X}\times\sP}) \}
\end{align*}
for $n\in\dZ$. Its heart $\mathrm{E}^0(\mathrm{I}\kk_\cX)$ therefore consists of those objects $H\in\EbI{\kk}{\cX}$ such that $\cL(H)$ is concentrated in degree zero. This is an abelian category. The associated cohomology functors are denoted by $\cH^n$.

In \cite{EnhPerv}, the authors define generalized t-structures $({}^p\EE_{\rc}^{\leq c}(\mathrm{I}k_\cX),{}^p\EE_{\rc}^{\geq c}(\mathrm{I}k_\cX))_{c\in\dR}$ on the category $\EbRcI{k}{\cX}$ for so-called perversities $p\colon \mathbb{Z}_{\geq 0}\to\dR$. Its heart ${}^p\EE_{\rc}^0(\mathrm{I}k_\cX)\vcentcolon={}^p\EE_{\rc}^{\leq 0}(\mathrm{I}k_\cX)\cap{}^p\EE_{\rc}^{\geq 0}(\mathrm{I}k_\cX)$ is a quasi-abelian category. In particular, they introduce the \emph{middle perversity (generalized) t-structure} for $p(n)=-\frac{n}{2}$, which is denoted by $({}^{1/2}\EE_{\rc}^{\leq c}(\mathrm{I}k_\cX),{}^{1/2}\EE_{\rc}^{\geq c}(\mathrm{I}k_\cX))_{c\in\dR}$. We refer to loc.~cit. for details.

\subsection{Galois descent for enhanced ind-sheaves}
In this section, we generalize two constructions, which are well-known for vector spaces, to the category of enhanced ind-sheaves: For a given finite Galois extension $L/K$, we consider, firstly, conjugation of an enhanced ind-sheaf over $L$ with respect to elements of the Galois group and, secondly, extension of scalars on an enhanced ind-sheaf over $K$. We establish some properties of these functors, and we will then show that the existence of suitable isomorphisms between an object over $L$ and its conjugates implies that the object comes from an object over the subfield $K$ by extension of scalars. This procedure is often called ``Galois descent'' and classical references for this construction on vector spaces can, for example, be found in \cite[§10.2]{Jac}, \cite[§AG.14]{BorelLAG}, \cite[§17]{Waterhouse}, \cite[§3.2]{Winter}. Let us also point out the exposition \cite{Conrad}.

Let $L/ K$ be a Galois extension and $G$ its Galois group.
For an element $g\in G$ and an $L$-vector space $V$, one can define the \emph{$g$-conjugate of $V$}, denoted by $\overline{V}^g$, as follows: As a $K$-vector space, $\overline{V}^g=V$, and the action of $L$ is given by $l\cdot v\vcentcolon= g(l)v$.

One easily checks that this construction (by applying it to sections over any open set) defines $g$-conjugation functors for sheaves of $L$-vector spaces on a topological space $M$:
\begin{equation}\label{eq:ConjSheaves}
\mathrm{Mod}(L_M)\to \mathrm{Mod}(L_M), \cF\mapsto \overline{\cF}^g
\end{equation}
(The restriction morphisms of $\overline{\cF}^g$ are the same as those of $\cF$.)
This functor is exact for any $g\in G$ and hence induces a functor on the derived category of sheaves
$$\Db{L}{M}\to\Db{L}{M}, \cF^\bullet \mapsto \overline{\cF^\bullet}^g.$$

The following lemma shows that conjugation is a very ``tame'' operation.
\begin{lemma} \label{lemmaConjCompatShv}
Let $f\colon X\to Y$ be a morphism of topological spaces and let $\cF,\cF_1,\cF_2\in\Db{L}{X}$, $\cG\in\Db{L}{Y}$. Then we have isomorphisms for any $g,h\in G$
\begin{itemize}
\item[(i)] $\overline{\mathrm{R}f_*\cF}^g\cong \mathrm{R}f_*\overline{\cF}^g$ and $\overline{\mathrm{R}f_!\cF}^g\cong \mathrm{R}f_!\overline{\cG}^g$,
\item[(ii)] $\overline{\mathrm{R}f^{-1}\cG}^g\cong \mathrm{R}f^{-1}\overline{\cG}^g$ and $\overline{\mathrm{R}f^!\cG}^g\cong \mathrm{R}f^!\overline{\cG}^g$,
\item[(iii)] $\overline{\cF_1\otimes\cF_2}^g\cong \overline{\cF_1}^g\otimes\overline{\cF_2}^g$ and $\RR\cH om_{L_X}(\overline{\cF_1}^g,\overline{\cF_2}^g)\cong \overline{\RR\cH om_{L_X}(\cF_1,\cF_2)}^g$,
\item[(iv)] $\overline{\overline{\cF}^g}^h\cong \overline{\cF}^{gh}$.
\end{itemize}
\end{lemma}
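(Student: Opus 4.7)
The key observation driving the proof is that for any $g\in G$, the conjugation functor $\cF\mapsto \overline{\cF}^g$ leaves the underlying sheaf of abelian groups (indeed, the underlying sheaf of $K$-vector spaces) completely unchanged; only the $L$-module structure is twisted by precomposition with the field automorphism $g$. Its quasi-inverse is $g^{-1}$-conjugation, so it is an exact autoequivalence of $\Mod{L}{M}$. The plan is therefore to verify each isomorphism first on the non-derived level, where it reduces to a bookkeeping exercise about $L$-actions on sections, and then to bootstrap to the derived category by exactness.

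Item (iv) is immediate from unwinding the definition: the underlying $K$-sheaf of $\overline{\overline{\cF}^g}^h$ is $\cF$, and for a local section $s$ the twisted action is $l\cdot s = h(l)\cdot_{g} s = g(h(l))\,s = (gh)(l)\,s$. For (i), both $f_*$ and $f_!$ are computed as sections with support condition over opens of $Y$, so the underlying $K$-sheaves of $f_*\overline{\cF}^g$ and $\overline{f_*\cF}^g$ coincide and the twisted $L$-action manifestly matches; the same holds for $f_!$. For (ii), $f^{-1}$ is defined by a stalkwise limit that is a construction of $K$-vector spaces, and $f^!$ for topological spaces can be reduced via its characterization in terms of $f^{-1}$ and orientation/dualizing data (or, alternatively, one uses that conjugation commutes with $f^{-1}$ and with $\RR\cH om$, and $f^!$ is built from these via Verdier duality). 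I plan to first establish these two equalities on the abelian-category level, which is essentially a diagram chase on sections, and to note that naturality in $g$ makes these isomorphisms canonical.

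The genuine check sits in (iii), since $\otimes_L$ and $\RR\cH om_L$ depend on the $L$-action. For the tensor product, the universal property gives the isomorphism: an $L$-balanced map out of $\overline{\cF_1}^g\otimes_L\overline{\cF_2}^g$ is the same datum as a $K$-bilinear map $\cF_1\times\cF_2\to \cG$ satisfying $\phi(g(l)v_1,v_2)=\phi(v_1,g(l)v_2)$, which by bijectivity of $g$ is the same as an $L$-balanced map out of $\cF_1\otimes_L\cF_2$, and the two $L$-actions (twisted vs.\ pre-twist) agree. For $\RR\cH om_L$ the same bijectivity argument shows that the condition $\phi(lv)=l\phi(v)$ is equivalent to $\phi(g(l)v)=g(l)\phi(v)$, identifying the two internal Hom sheaves. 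Once these non-derived statements are in hand, the derived versions follow because the exact autoequivalence $\overline{(\cdot)}^g$ preserves injective, projective and flat objects: an injective (resp.\ flat) resolution of $\cF$ becomes, after $g$-conjugation, an injective (resp.\ flat) resolution of $\overline{\cF}^g$, and applying the relevant functor yields the derived isomorphism from the non-derived one.

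I do not expect a serious obstacle here; the main task is organizational, in keeping track of which $L$-action one is referring to at each step, especially in (iii) where the $L$-action truly enters the construction. Exactness of conjugation is what allows the derived statements to be obtained cleanly from their sheaf-level counterparts, so the proof is essentially a uniform unwinding once the language of (\ref{eq:ConjSheaves}) is carefully deployed.
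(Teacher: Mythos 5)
Most of your argument coincides with the paper's: items (i), (iii) and (iv) are, as you say, section-level bookkeeping (the underlying $K$-sheaf is unchanged and only the $L$-action is twisted), and the derived statements follow from exactness of conjugation; your direct stalkwise/colimit argument for $f^{-1}$ is also fine (the paper instead gets it by adjunction from (i)).

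The one genuine problem is your treatment of $f^!$. In the generality of this lemma ($f$ an arbitrary morphism of good topological spaces, $\cG$ an arbitrary object of $\Db{L}{Y}$), $f^!$ is \emph{only} defined as the right adjoint of $\RR f_!$ furnished by Poincar\'e--Verdier duality; it is not computed from $f^{-1}$ together with orientation/dualizing data (that formula requires $f$ to be, e.g., a topological submersion), and it is not recoverable as $D_X f^{-1} D_Y$ or from $f^{-1}$ and $\RR\cH om$ without constructibility-type finiteness hypotheses. So neither of your two suggested reductions applies as stated, and this half of (ii) is left unproved. The repair is short and is exactly what the paper does: first record the ``twisted adjunction''
\begin{equation*}
\mathrm{Hom}_{\Db{L}{X}}\bigl(\overline{\cF_1}^g,\cF_2\bigr)\;\cong\;\mathrm{Hom}_{\Db{L}{X}}\bigl(\cF_1,\overline{\cF_2}^{\,g^{-1}}\bigr),
\end{equation*}
which holds because $g$-conjugation is an equivalence with quasi-inverse $g^{-1}$-conjugation (a fact you already noted). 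Since you have shown in (i) that conjugation commutes with $\RR f_!$ (and with $\RR f_*$), uniqueness of adjoints then forces $\overline{f^!\cG}^g\cong f^!\overline{\cG}^g$ (and, by the same token, gives the $f^{-1}$ case for free). With this substitution your proof is complete and matches the paper's.
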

\begin{proof}
\begin{description}
\item[\textnormal{(i)}]
By definition, for the underived direct image functor we have $(f_*\cF)(U)=\cF(f^{-1}(U))$ for any open subset $U\subseteq X$. Since conjugation for sheaves is defined on sections, this yields a  natural isomorphism $\overline{f_*\cF}^g\cong f_*\overline{\cF}^g$. The functor $\overline{(\bullet)}^g$ is exact, and hence one obtains the first statement by derivation. The second isomorphism follows similarly: We note that $f_!\cF$ is a subsheaf of $f_*\cF$ and that the above isomorphism induces an isomorphism $\overline{f_!\cF}^g\cong f_!\overline{\cF}^g$, and conclude again by deriving the composition of functors.
\item[\textnormal{(ii)}] It is not difficult to see that
$$\mathrm{Hom}_{\Db{L}{X}}\big(\overline{\cF_1}^g,\cF_2\big) \cong \mathrm{Hom}_{\Db{L}{X}}\big(\cF_1,\overline{\cF_2}^{g^{-1}}\big)$$
for $\cF_1,\cF_2\in\Db{L}{X}$. Using this, the statements in (ii) follow from (i) by adjunction.
\item[\textnormal{(iii) and (iv)}] follow directly from the corresponding statements for vector spaces (for the tensor product, note that $g$-conjugation is compatible with sheafification, or derive it by adjunction from the statement about $\mathrm{R}\cH om$).
\end{description}
\end{proof}

By the general theory of ind-sheaves (cf.\ \cite[p.\ 7]{KS01}), $g$-conjugation further extends to a functor on ind-sheaves
$$\Ind{L}{M}\to\Ind{L}{M}, F=\indlim{i} \cF_i \mapsto \overline{F}^g=\indlim{i}\overline{\cF_i}^g.$$
Since this functor is still exact (cf.\ \cite[p.\ 11]{KS01}), we get a $g$-conjugation on the derived category of ind-sheaves
$$\DbIL{M}\to\DbIL{M}, F^\bullet \mapsto \overline{F^\bullet}^g.$$
A corresponding statement like Lemma \ref{lemmaConjCompatShv} holds for ind-sheaves.
Moreover, conjugation behaves nicely with respect to the functors $\iota_X$, $\alpha_X$ and $\beta_X$ between sheaves and ind-sheaves, as the following result shows.
\begin{lemma}
Let $\cF\in\DbL{X}$, $F\in\DbIL{X}$ and $g\in G$. Then there are isomorphisms
\begin{itemize}
	\item[(i)] $\overline{\iota_X\cF}^g\cong \iota_X\overline{\cF}^g$,
	\item[(ii)] $\overline{\alpha_X F}^g\cong \alpha_X\overline{F}^g$,
	\item[(iii)] $\overline{\beta_X\cF}^g\cong \beta_X\overline{\cF}^g$.
\end{itemize}
\end{lemma}

\begin{proof}
	Let us prove the statements in the non-derived case, i.e.\ for $\cF\in\Mod{L}{X}$ and $F\in \mathrm{I}(L_X)$. This is enough since the functors $\iota_X$, $\alpha_X$ and $\beta_X$ are exact.
	
	The functor $\iota_X$ is given by $$\iota_X\cF=\indlim{U\subset\subset X} \cF_U,$$ where $U$ ranges over the relatively compact open subsets of $X$. Therefore, by definition of conjugation for ind-sheaves, we obtain
	$$\overline{\iota_X\cF}^g=\indlim{U\subset\subset X}\overline{\cF_U}^g\cong \indlim{U\subset\subset X}(\overline{\cF}^g)_U=\iota_X(\overline{\cF}^g).$$
	This proves (i). Using this and the fact that $\alpha_X$ is left adjoint to $\iota_X$, we get for any $\cG\in \Mod{L}{X}$
	\begin{align*}
	\mathrm{Hom}_{\Mod{L}{X}}\big(\overline{\alpha_X F}^g,\cG\big)&\cong \mathrm{Hom}_{\Mod{L}{X}}\Big(\alpha_X F,\overline{\cG}^{g^{-1}}\Big)\cong \mathrm{Hom}_{\mathrm{I}(L_X)}\Big(F,\iota_X\overline{\cG}^{g^{-1}}\Big)\\
	&\cong \mathrm{Hom}_{\mathrm{I}(L_X)}\Big(F,\overline{\iota_X\cG}^{g^{-1}}\Big)\cong \mathrm{Hom}_{\mathrm{I}(L_X)}\Big(\overline{F}^g,\iota_X\cG\Big)\\
	&\cong \mathrm{Hom}_{\Mod{L}{X}}\big(\alpha_X\overline{F}^g,\cG\big),
	\end{align*}
	hence (ii) follows. Accordingly, one proves (iii), using that $\beta_X$ is the left adjoint of $\alpha_X$.
\end{proof}

Now, the category $\EbIL{\cX}$ of enhanced ind-sheaves on a bordered space $\cX=(X,\widehat{X})$ is a quotient category of $\DbIL{\widehat{X}\times\sP}$, and it can be checked that the above conjugation functor for ind-sheaves on $M=\widehat{X}\times\sP$ induces a well-defined functor
$$\EbIL{\cX}\to\EbIL{\cX}, H\mapsto \overline{H}^g.$$

The following lemma is not difficult to prove, and it shows that conjugation functors are compatible with many of the standard operations.

\begin{lemma}\label{lemmaConjCompat}
Let $\cX=(X,\widehat{X})$ and $\cY=(Y,\widehat{Y})$ be bordered spaces and $f\colon \cX\to \cY$ a morphism. Let $H,H_1,H_2\in \EbIL{\cX}$, $\cF\in\Db{L}{X}$ and $J\in \EbIL{\cY}$. Then for any $g,h\in G$ we have the following isomorphisms:
\begin{itemize}
\item[(i)] $\overline{\mathrm{E}f_*H}^g\cong \mathrm{E}f_*\overline{H}^g$ and $\overline{\mathrm{E}f_{!!}H}^g\cong \mathrm{E}f_{!!}\overline{H}^g$,
\item[(ii)] $\overline{\mathrm{E}f^{-1}J}^g\cong \mathrm{E}f^{-1}\overline{J}^g$ and $\overline{\mathrm{E}f^!J}^g\cong \mathrm{E}f^!\overline{J}^g$,
\item[(iii)] $\overline{H_1\conv H_2}^g\cong \overline{H_1}^g\conv \overline{H_2}^g$, $\overline{\RR\Ihomp(H_1,H_2)}^g\cong \RR\Ihomp(\overline{H_1}^g,\overline{H_2}^g)$ and $\overline{\pi^{-1}\cF\otimes H}^g\cong \pi^{-1}\overline{\cF}^g\otimes \overline{H}^g$,
\item[(iv)] $\overline{\overline{H}^g}^h\cong \overline{H}^{gh}$
\end{itemize}
\end{lemma}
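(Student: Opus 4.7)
The plan is to reduce everything to the analogous statements already available one level below, namely for ind-sheaves on $\widehat{X}\times\sP$ (where the ind-sheaf analogue of Lemma \ref{lemmaConjCompatShv} holds, as mentioned in the text), and to check that the conjugation functor descends correctly through the two successive Verdier quotients defining $\EbIL{\cX}$. The key observation making this work is that $\overline{(\bullet)}^g$ is the identity on the underlying sheaf of abelian groups: only the $L$-action is twisted by $g$. In particular, conjugation preserves supports, fixes all ``geometric'' constant (ind-)sheaves like $L_{\{t\geq a\}}$, $L_V$, and $L_{\{t+\phi\geq a\}}$ up to canonical isomorphism, and is exact.

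First I would check that conjugation descends to $\EbIL{\cX}$. The category is defined as $\DbIL{\widehat{X}\times\sP}$ localised along $\DbIL{(\widehat{X}\times\sP)\setminus(X\times\dR)}$ and then along $\pi^{-1}\DbIL{\cX}$. Since conjugation is exact and support-preserving, it stabilises both thick subcategories, so it induces a well-defined triangulated endofunctor on the quotient, and this endofunctor commutes with the left adjoint $\cL$ and with the quotient $Q$ up to canonical isomorphism.

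Next I would verify the six statements in parallel, using the fact that all the enhanced operations are defined from the six operations on $\DbIL{\widehat{X}\times\sP}$ (possibly combined with convolutions with fixed constant ind-sheaves such as $L_{\{t\geq 0\}}$). For (i) and (ii), once conjugation is realised at the ind-sheaf level on $\widehat{X}\times\sP$, the statements follow from the ind-sheaf version of Lemma \ref{lemmaConjCompatShv} applied to the induced morphism $\widetilde f\colon \widehat X\times\sP\to\widehat Y\times\sP$ and passage to the quotient. For (iii), the convolution $\conv$ is defined via $\RR\mu_{!!}$ along $\mu(t_1,t_2)=t_1+t_2$, so compatibility with conjugation again follows from the ind-sheaf level together with $\overline{L_{\{t\geq 0\}}}^g\cong L_{\{t\geq 0\}}$; the statement about $\pi^{-1}\cF\otimes H$ is immediate once one knows $\overline{\pi^{-1}\cF}^g\cong\pi^{-1}\overline{\cF}^g$ (from (ii)) and that $\otimes$ commutes with $g$-conjugation, which is a direct consequence of Lemma \ref{lemmaConjCompatShv}(iii) on sections. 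Item (iv) is immediate from the corresponding identity $\overline{\overline{V}^g}^h=\overline V^{gh}$ for $L$-vector spaces, applied sectionwise and transported through the two quotients. Finally, (v) follows because the standard t-structure on $\EbIL{\cX}$ is transported via $\cL$ from the standard t-structure on $\DbIL{\widehat X\times\sP}$, conjugation commutes with $\cL$, and conjugation is exact on $\Ind{L}{\widehat X\times\sP}$, hence commutes with the cohomology functors.

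The only genuine obstacle is the bookkeeping in the descent: one must check that each natural isomorphism obtained on $\DbIL{\widehat{X}\times\sP}$ descends to an isomorphism of the quotient-level operations rather than merely producing a diagram that commutes up to objects in the localising subcategory. Since conjugation is exact and stabilises those subcategories, this verification is mechanical but should be carried out once and for all at the start so that each of (i)--(v) reduces to a one-line computation.
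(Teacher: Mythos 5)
Your proposal is correct and follows essentially the same route as the paper: the paper's proof simply notes that the enhanced operations are induced by operations on ind-sheaves, so (i)--(iv) follow from Lemma \ref{lemmaConjCompatShv} and its ind-sheaf analogue, while (v) follows from exactness of conjugation. Your additional verification that conjugation descends through the two Verdier quotients is exactly the check the paper alludes to just before the lemma, so the extra detail is welcome but not a different argument.
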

\begin{proof}
(i)--(iv) Since the operations on enhanced ind-sheaves are induced by operations on ind-sheaves, this follows from Lemma \ref{lemmaConjCompatShv} and the corresponding statements for ind-sheaves.
\end{proof}

Conjugation associates to an object over $L$ a different object over $L$. On the other hand, given an arbitrary field extension $L/K$, one can extend scalars, starting from objects defined over $K$: Classically, if $V$ is a $K$-vector space, then $L\otimes_K V$ is a vector space over $L$. This construction naturally extends to sheaves: Let $\cF\in \Mod{K}{X}$ and denote by $L_X$ (resp.\ $K_X$) the constant sheaf with stalk $L$ (resp. $K$), then $L_X\otimes_{K_X}\cF\in \Mod{L}{X}$. Since tensor products over fields are exact, we obtain a functor
\begin{equation}\label{eq:extensionSh}
    \Db{K}{X}\to\Db{L}{X}, \quad \cF\mapsto L_X\otimes_{K_X}\cF.
\end{equation}
Similarly, if $F=\indlim{i\in I}\cF_i\in\Ind{K}{X}$ for $\cF_i\in\Modc{K}{X}$, then $L_X\otimes_{K_X}F\cong\iota_X(L_X)\otimes_{K_X}F\indlim{i\in I}(L_X\otimes_{K_X}\cF_i)\in \Ind{L}{X}$. (Note that $L_X$ means the ind-sheaf $\iota_X(L_X)$ here.) This again extends to a functor between derived categories, and it is compatible with some basic operations: We have the following lemma for ind-sheaves (where (i) and (ii) hold correspondingly for sheaves).
\begin{lemma}\label{IndlemmaExtensionCompatibility}
Let $L/K$ be a field extension and $f\colon X\to Y$ a morphism of topological spaces. Let $F\in\DbI{K}{X}$, $\cF\in \Db{K}{X}$ and $G\in\DbI{K}{Y}$. Then we have isomorphisms
\begin{itemize}
    \item[(i)] $ Rf_{!!}(L_X\otimes_{K_X}F)\cong L_Y\otimes_{K_Y} Rf_{!!}F $,
    \item[(ii)] $ f^{-1}(L_Y\otimes_{K_Y}G)\cong L_X\otimes_{K_X} f^{-1}G $,
    \item[(iii)] $\iota_X(L_X\otimes_{K_X}F)\cong L_X\otimes\iota_X(F)$,
    \item[(iv)]$\alpha_X(L_X\otimes_{K_X}\cF)\cong L_X\otimes\alpha_X(\cF)$,
    \item[(v)]$\beta_X(L_X\otimes_{K_X}F)\cong L_X\otimes\beta_X(F)$.
\end{itemize}
\end{lemma}
\begin{proof}
\begin{itemize}
    \item[(i)] Noting that $L_X\cong f^{-1}L_Y$, this follows from the projection formula for ind-sheaves (see the first isomorphism in \cite[Theorem 5.2.7]{KS01}).
\item[(ii)] This follows as above, using the second isomorphism in \cite[Proposition 4.3.2]{KS01}.
\item[(iii)--(v)] These follow from the commutation between tensor product and $\iota_X,\alpha_X,\beta_X$ (see \cite[Proposition 4.2.3, Proposition 4.2.12]{KS01}). Moreover, note that $\alpha_X\circ\iota_X=\id$ and $\beta_X(L_X)=\iota_X(L_X)$ (cf.\ \cite[p.\ 50]{KS01}).
\end{itemize}
\end{proof}

Using notation as in \eqref{eq:pitens}, the functor of extension of scalars for ind-sheaves on $\widehat{X}$ induces a functor
\begin{equation}\label{eq:extension}
\EbI{K}{\cX}\to\EbI{L}{\cX}, \quad H\mapsto \pi^{-1}L_X\otimes_{\pi^{-1}K_X} H
\end{equation}
for any bordered space $\cX=(X,\widehat{X})$. (We emphasize here the field over which we take the tensor product.)

\begin{definition}\label{def:KStructure}
We will say that an object $H\in\EbI{L}{\cX}$ has a $K$-structure if it is contained in the essential image of the functor \eqref{eq:extension}.\\ Similarly, we say that an object $\cF\in\Db{L}{X}$ has a $K$-structure if it is contained in the essential image of the functor \eqref{eq:extensionSh}.
\end{definition}

We will give a statement about compatibility of the functor \eqref{eq:extension} with the six operations on enhanced ind-sheaves below (Lemma~\ref{IndlemmaExtensionCompatibility}).
If we restrict our focus to $\dR$-constructible enhanced ind-sheaves, we can also prove compatibility of extension of scalars with the sheafification functor.

As a preparation, we prove two lemmas about compatibility with certain Hom functors.
The following statement was suggested to us by Takuro Mochizuki, to whom we are very grateful for this.

\begin{lemma}\label{lemma:MocRc}
Let $X$ be a real analytic manifold and let $\cF_1,\cF_2\in\mathrm{Mod}_{\rc}(K_X)$ be $\dR$-constructible sheaves with compact support. Then there is a natural isomorphism of $L$-vector spaces
$$L\otimes_K \mathrm{Hom}_{K_X}(\cF_1,\cF_2) \overset{\cong}{\longrightarrow} \mathrm{Hom}_{L_X}(L_X\otimes_{K_X}\cF_1,L_X\otimes_{K_X}\cF_2).$$
\end{lemma}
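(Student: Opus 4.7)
The plan is to rewrite the statement through the adjunction for extension of scalars, reducing it to a compactness (= commutation with arbitrary direct sums) property of $\cF_1$ in $\mathrm{Mod}(K_X)$, and then to establish that compactness by a subanalytic dévissage.

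First I would define the natural map $\varphi$ by sending a pure tensor $l \otimes f$ to the $L_X$-linear morphism $l' \otimes s \mapsto (ll') \otimes f(s)$. Via the adjunction between $L_X\otimes_{K_X}(-)$ and the forgetful functor $\mathrm{Mod}(L_X) \to \mathrm{Mod}(K_X)$, the target of $\varphi$ identifies canonically with $\mathrm{Hom}_{K_X}(\cF_1, L_X\otimes_{K_X}\cF_2)$. Fixing a $K$-basis $(e_i)_{i\in I}$ of $L$ and checking stalks, one has $L_X\otimes_{K_X}\cF_2 \cong \bigoplus_{i\in I}\cF_2$ as $K_X$-modules. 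Under these identifications $\varphi$ becomes the canonical comparison map
$$\bigoplus_{i\in I}\mathrm{Hom}_{K_X}(\cF_1,\cF_2) \longrightarrow \mathrm{Hom}_{K_X}\Bigl(\cF_1,\bigoplus_{i\in I}\cF_2\Bigr),$$
so the problem reduces to showing that $\cF_1$ is a compact object of the abelian category $\mathrm{Mod}(K_X)$, i.e.\ that $\mathrm{Hom}_{K_X}(\cF_1,-)$ commutes with arbitrary direct sums.

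To establish compactness, I would invoke the subanalytic triangulation theorem: since $\cF_1$ is $\rc$ with compact support, a compact subanalytic neighborhood of $\Supp(\cF_1)$ admits a finite subanalytic triangulation compatible with the stratification that makes $\cF_1$ constructible. This produces a finite filtration of $\cF_1$ whose successive subquotients are of the form $K_Z^{n_Z}$, with $Z$ a closed compact subanalytic simplex. Since compact objects in an abelian category are stable under finite colimits and extensions, the problem reduces to verifying that $K_Z$ is compact in $\mathrm{Mod}(K_X)$ for such $Z$. In the basic case, $\cH om_{K_X}(K_Z,\cG)$ is supported on $Z$ with stalk $\cG_z$ at each $z\in Z$, so the stalkwise formula gives $\cH om_{K_X}(K_Z,\bigoplus_i\cG_i)\cong \bigoplus_i\cH om_{K_X}(K_Z,\cG_i)$; combined with the fact that $\Gamma(X,-)$ commutes with direct sums of sheaves supported on the compact set $Z$, this yields compactness of $K_Z$.

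The hard part will be the dévissage step, carried out in the abelian category $\mathrm{Mod}(K_X)$ rather than in a derived category: one has to verify that the commutation of $\mathrm{Hom}_{K_X}(-,\bullet)$ with direct sums (a left-exact-only statement) genuinely propagates through the short exact sequences encoding the triangulation-adapted filtration, which requires a careful snake-lemma bookkeeping. A secondary point is that throughout the reduction one must track the canonical comparison map $\varphi$ itself, rather than arguing only that the source and target of the lemma are abstractly isomorphic.
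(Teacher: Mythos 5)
Your reduction discards the hypotheses on $\cF_2$, and that is where it breaks. After the adjunction and the choice of a $K$-basis of $L$, the lemma amounts to the bijectivity of the comparison map $\bigoplus_{i\in I}\mathrm{Hom}_{K_X}(\cF_1,\cF_2)\to\mathrm{Hom}_{K_X}(\cF_1,\bigoplus_{i\in I}\cF_2)$ for copies of one fixed $\dR$-constructible, compactly supported sheaf $\cF_2$; you replace this by the much stronger assertion that $\cF_1$ is a compact object of $\mathrm{Mod}(K_X)$, and that assertion is false. Take $X=\dR$, $\cF_1=K_{(0,1)}$ (which is $\dR$-constructible with compact support $[0,1]$) and $\cG_n=K_{[a_n,b_n]}$ for disjoint closed intervals $[a_n,b_n]\subset(0,1)$ accumulating at $0$. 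Since $\cF_1=j_!K_{(0,1)}$ for the open embedding $j\colon(0,1)\hookrightarrow\dR$, one has $\mathrm{Hom}_{K_X}(\cF_1,\cG)\cong\Gamma((0,1);\cG|_{(0,1)})$, and the section of $\bigoplus_n\cG_n$ over $(0,1)$ equal to $1$ on every $[a_n,b_n]$ is locally a finite sum of sections of the summands (the intervals accumulate only at $0\notin(0,1)$) but lies in no finite subsum; hence $\mathrm{Hom}_{K_X}(\cF_1,\bigoplus_n\cG_n)\neq\bigoplus_n\mathrm{Hom}_{K_X}(\cF_1,\cG_n)$. The lemma survives only because all summands are the single constructible sheaf $\cF_2$: on each stratum of a triangulation adapted to both sheaves, $\cF_2$ is constant, so a section of $\bigoplus_{i\in I}\cF_2$ over a connected stratum involves finitely many indices, and only finitely many strata meet the compact supports. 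So a correct proof must use constructibility and compact support of both $\cF_1$ and $\cF_2$ simultaneously, which is exactly what the paper's sketch does (common triangulation, finitely many strata, constant sheaves on single strata, induction on the dimension of strata). Note also that if you were tacitly assuming $L/K$ finite, the statement would be trivial for arbitrary sheaves, since $\mathrm{Hom}$ commutes with finite direct sums; the paper needs it for extensions such as $\dC/K$ with $K$ a number field (e.g.\ via Lemma~\ref{lemmaIhomExtensions} and Corollary~\ref{corSheafificationExtension}), so the infinite-index case is the whole content, and that is precisely where your reduction fails.

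Two intermediate steps are also incorrect as written. First, a triangulation-adapted dévissage of $\cF_1$ has successive subquotients $K_\sigma^{n_\sigma}$ with $\sigma$ a \emph{locally closed} (relatively open) simplex, not a closed one; a finite filtration with all subquotients constant on closed sets does not exist in general: already $K_{(0,1)}$ admits no nonzero subsheaf of the form $K_Z$ with $Z$ closed, because $\mathrm{Hom}_{K_X}(K_Z,K_{(0,1)})=\Gamma_Z(\dR;K_{(0,1)})=0$; and it cannot exist, since $K_Z$ with $Z$ compact closed is indeed a compact object and compact objects are stable under extensions, while $K_{(0,1)}$ is not compact by the counterexample above. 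Second, your base-case justification is wrong: for $Z$ closed one has $\mathrm{Hom}_{K_X}(K_Z,\cG)=\Gamma_Z(X;\cG)$, so the stalk of $\cH om_{K_X}(K_Z,\cG)$ at $z\in Z$ is $(\Gamma_Z\cG)_z$, not $\cG_z$ (for instance $\cH om_{K_X}(K_{\{0\}},K_\dR)$ has vanishing stalk at $0$); the compactness of $K_Z$ for compact closed $Z$ does hold, but for the support-theoretic reason just indicated, not via your stalk formula.
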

\begin{proof}
We only sketch the idea of proof here: $\dR$-constructible sheaves on real analytic manifolds can be considered as constructible sheaves on simplicial complexes (see \cite[Chap.\ VIII]{KS90}). By compactness of the support, finitely many strata are sufficient to describe the sheaves. One starts by proving the statement for constant sheaves on single strata and concludes by induction on the dimension of the strata.
\end{proof}
Thanks to the lemma above, it is easy to prove the following.

\begin{lemma}\label{lemmaIhomExtensions}
		Let $X$ be a real analytic manifold, $\cF\in\mathrm{D}^\mathrm{b}_{\rc}(L_X)$ and $G\in\mathrm{D}^{\mathrm{b}}_{\mathrm{suban}}(\mathrm{I}L_X)$ such that $\cF\cong L_X\otimes_{K_X}\cF_K$ for some $\cF_K\in\mathrm{D}^\mathrm{b}_{\rc}(K_X)$ and $G\cong L_X\otimes_{K_X}G_K$ for some $G_K\in\mathrm{D}^{\mathrm{b}}_{\mathrm{suban}}(\mathrm{I}K_X)$, then we have an isomorphism in $\DbI{L}{X}$
	$$\mathrm{R}\cI hom(\cF,G)\cong L_X\otimes_{K_X} \mathrm{R}\cI hom(\cF_K,G_K).$$
	In particular, for $\cF, \cG\in\mathrm{D}^\mathrm{b}_{\rc}(L_X)$ such that $\cF\cong L_X\otimes_{K_X}\cF_K$ and $\cG\cong L_X\otimes_{K_X}\cG_K$ for some $\cF_K, \cG_K\in\mathrm{D}^\mathrm{b}_{\rc}(K_X)$, there is an isomorphism in $\Db{L}{X}$
	$$\mathrm{R}\cH om(\cF,\cG)\cong L_X\otimes_{K_X} \mathrm{R}\cH om(\cF_K,\cG_K).$$
\end{lemma}

\begin{proof}
	Let us first assume that $\cF_K,\cG_K$ are concentrated in degree $0$. We can write $G_K=\indlim{i} \cG_i$ for some $\cG_i\in\mathrm{Mod}_{\rc}^c(K_X)$ (by definition of $\mathrm{I}_{\mathrm{suban}}(K_X)$) and hence $G=\indlim{i} L_X\otimes_{K_X} \cG_i$. Then by \cite[Corollary 4.2.8(iii) and Proposition 4.2.4]{KS01} we have
	\begin{equation}\label{IhomC}\mathcal{I}hom(\cF,G)\cong \indlim{i} \mathcal{I}hom(\cF,L_X\otimes_{K_X}\cG_i)\cong \indlim{i} \mathcal{H}om(L_X\otimes_{K_X}\cF_K,L_X\otimes_{K_X}\cG_i).\end{equation}
	Let us show
	\begin{equation}\label{HomC} \mathcal{H}om(L_X\otimes_{K_X}\cF_K,L_X\otimes_{K_X}\cG_i)\cong L_X\otimes_{K_X}\mathcal{H}om(\cF_K,\cG_i)\end{equation}
	Let $U$ be a suffiently small ball in $X$, then, denoting by $C$ the (compact) support of $\cG_i$, we have
	\begin{align*}
		\Gamma(U;\mathcal{H}om(L_X\otimes_{K_X}\cF_K,L_X\otimes_{K_X}\cG_i))&=\mathrm{Hom}_{L_U}(L_U\otimes_{K_U}\cF_K|_U,L_U\otimes_{K_U}\cG_i|_U)\\
		&\cong \mathrm{Hom}_{L_X}(L_X\otimes_{K_X}(\cF_K)_{C\cap U},L_X\otimes_{K_X}(\cG_i)_U)\\
		&\cong L\otimes_K \mathrm{Hom}_{K_X}((\cF_K)_{C\cap U},(\cG_i)_U)\\
		&\cong L\otimes_K \mathrm{Hom}_{K_U}(\cF_K|_U,\cG_i|_U)
	\end{align*}
where the second line follows by full faithfulness of extension by zero (from $U$ to $X$) and the third line from Lemma~\ref{lemma:MocRc}, since the sheaves $(\cF_K)_{C\cap U}$ and $(\cG_i)_U$ are compactly supported and $\dR$-constructible.
On the other hand, the tensor product $L_{X}\otimes_{K_X}\mathcal{H}om(\cF_K,\cG_i)$ is the presheaf associated to
$$U\mapsto L_X(U)\otimes_{K}\mathrm{Hom}_{K_U}(\cF_K|_U,\cG_i|_U)$$
On a basis of the topology consisting of small open balls $U$, we have $L_X(U)=L$ and this coincides with the above, so \eqref{HomC} follows.
Together with \eqref{IhomC} and the fact that $\indlim{}$ commutes with tensor products, this proves the non-derived version of the first isomorphism in lemma.

By deriving functors (note that $L_X\otimes_{K_X}(\bullet)$ is exact), one gets the first statement of the lemma in the derived case.

The second assertion follows now by applying the functor $\alpha_X$ since we have $\alpha_X\circ \RR \cI hom \cong \cH om$ and $\alpha_X$ commutes with tensor products (cf.\ \cite[Propositions 4.2.3 and 4.2.4]{KS01}
\end{proof}

We can now state the above-mentioned compatibility of the sheafification functor with extension of scalars.

\begin{corollary}\label{corSheafificationExtension}
Let $\cX$ be a real analytic bordered space and $H\in\EbRcI{L}{\cX}$ such that $H\cong \pi^{-1}L_X\otimes_{\pi^{-1}K_X} H_K$ for some $H_K\in\EbRcI{K}{\cX}$. Then
$$
\mathsf{sh}_\cX(H)\cong L_X\otimes_{K_X}\mathsf{sh}_\cX(H_K).
$$
\end{corollary}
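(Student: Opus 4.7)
The plan is to unpack the definition
$$\mathsf{sh}_\cX(H) = \alpha_X\, j^{-1}\, \mathrm{R}\overline{\pi}_*\, \mathrm{R}\Ihom\bigl(L_{\{t\geq 0\}}\oplus L_{\{t\leq 0\}},\, \cL H\bigr)$$
and show that each of the five operations $\cL$, $\mathrm{R}\Ihom(L_{\{t\geq 0\}}\oplus L_{\{t\leq 0\}},-)$, $\mathrm{R}\overline{\pi}_*$, $j^{-1}$ and $\alpha_X$ commutes with extension of scalars from $K$ to $L$. Writing $H\cong\pi^{-1}L_X\otimes_{\pi^{-1}K_X}H_K$ and chaining the resulting isomorphisms then produces $\mathsf{sh}_\cX(H)\cong L_X\otimes_{K_X}\mathsf{sh}_\cX(H_K)$.

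First, I would verify that the left adjoint $\cL$ to the quotient $Q$ commutes with $L\otimes_K(-)$, i.e.\
$$\cL(\pi^{-1}L_X\otimes_{\pi^{-1}K_X}H_K)\cong L_{\widehat{X}\times\sP}\otimes_{K_{\widehat{X}\times\sP}}\cL H_K.$$
Using the explicit description $\cL\cong \mathrm{R}j_{\cX\times\dR_\infty!!}\mathrm{L}^\EE$ and that $\mathrm{L}^\EE$ can be realized via convolution with the canonical $\dZ$-defined object $k^\EE_\cX$, this reduces to compatibility of convolution (Lemma \ref{lemmaExtensionCompatibility}(iii)) and of $\mathrm{R}j_{!!}$ with extension of scalars at the ind-sheaf level (directly analogous to Lemma \ref{lemmaExtensionCompatibility}(i)). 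Since $H\in\EbRcI{L}{\cX}$, the image $\cL H$ is ind-$\dR$-constructible, and $L_{\{t\geq 0\}}\oplus L_{\{t\leq 0\}}$ trivially carries a $K$-structure, so Lemma \ref{lemmaIhomExtensions} applies and gives
$$\mathrm{R}\Ihom\bigl(L_{\{t\geq 0\}}\oplus L_{\{t\leq 0\}},\cL H\bigr)\cong L_{\widehat{X}\times\sP}\otimes_{K_{\widehat{X}\times\sP}}\mathrm{R}\Ihom\bigl(K_{\{t\geq 0\}}\oplus K_{\{t\leq 0\}},\cL H_K\bigr).$$

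The remaining three compatibilities are standard. Since $L_{\widehat{X}\times\sP}\cong\overline{\pi}^{-1}L_{\widehat{X}}$, the projection formula for ind-sheaves \cite[Proposition 3.3.13]{DK16} yields
$$\mathrm{R}\overline{\pi}_*\bigl(L_{\widehat{X}\times\sP}\otimes_{K_{\widehat{X}\times\sP}} F\bigr)\cong L_{\widehat{X}}\otimes_{K_{\widehat{X}}}\mathrm{R}\overline{\pi}_* F.$$
The pullback $j^{-1}$ commutes with tensor products over constant sheaves and sends $L_{\widehat{X}}$ to $L_X$. Finally, $\alpha_X$ is an exact left adjoint preserving filtrant colimits and sending $\iota_X L_X$ to $L_X$, and therefore commutes with $L\otimes_K(-)$. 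Composing these four isomorphisms delivers the claim.

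The most delicate step will be the first, since $\cL$ is characterized abstractly as a left adjoint; once an explicit model is invoked, tracking extension of scalars through it becomes routine. Lemma \ref{lemmaIhomExtensions} is the main new input, while the rest reduces to six-functor compatibilities already recorded in \cite{DK16}.
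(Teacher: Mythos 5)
Your proposal is correct and essentially matches the paper's proof: both unpack the definition of $\mathsf{sh}_\cX$, invoke Lemma~\ref{lemmaIhomExtensions} for the $\mathrm{R}\cI hom$ step, and use that $\mathrm{R}\overline{\pi}_*$ (which equals $\mathrm{R}\overline{\pi}_{!!}$ since $\overline{\pi}$ is proper, which is what justifies your appeal to the projection formula), $j^{-1}$ and $\alpha_X$ commute with extension of scalars, while you additionally spell out the compatibility of $\cL$ that the paper leaves implicit. One cosmetic slip: $\mathrm{L}^\EE$ is given by convolution with $k_{\{t\geq 0\}}\oplus k_{\{t\leq 0\}}$ rather than with $k^\EE_\cX$, but since that kernel also carries an obvious $K$-structure your argument is unaffected.
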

\begin{proof}
By definition (see formula \eqref{eq:Sh}), one has
$$\mathsf{sh}_\cX(H)=\alpha_X j^{-1}\mathrm{R}\overline{\pi}_*\mathrm{R}\cI hom\big(\pi^{-1}L_X\otimes_{\pi^{-1}K_X} (K_{\{t\geq 0\}}\oplus K_{\{t\leq 0\}}),\pi^{-1}L_X\otimes_{\pi^{-1}K_X} \cL H_K\big).$$
By Lemma~\ref{lemmaIhomExtensions}, the functor $\mathrm{R}\cI hom$ is compatible with $K$-structures (since there is only a usual sheaf in the first component). Moreover, the functor $\alpha_X$ commutes with tensor products and $\mathrm{R}\overline{\pi}_*=\mathrm{R}\overline{\pi}_{!!}$ (note that $\overline{\pi}\colon \widehat{X}\times\mathsf{P}\to\widehat{X}$ is proper) as well as $j^{-1}$ commute with extension of scalars (similarly to Lemma~\ref{lemmaExtensionCompatibility}). This concludes the proof.
\end{proof}

Let us now study relations between extension of scalars and the seix operations for enhanced ind-sheaves: We have similar compatibilities as in Lemma~\ref{IndlemmaExtensionCompatibility}(i)--(ii) for the case of enhanced ind-sheaves. In the setting we are mostly interested in, namely when $\cX=(X,\widehat{X})$ is a bordered space attached to a complex algebraic variety (i.e.\ $\widehat{X}$ is a compactification of $X$) and all objects involved are $\dR$-constructible, we can also prove compatibilities for the functors $\EE f_*$, $\EE f^!$ and $\RR \cI hom^+$ by duality.

\begin{lemma}\label{lemmaExtensionCompatibility}
Let $L/K$ be a field extension and $f\colon \cX=(X,\widehat{X})\to \cY=(Y,\widehat{Y})$ a morphism of bordered spaces. Let $F,F_1,F_2\in\EbI{K}{\cX}$ and $G\in\EbI{K}{\cY}$. Then we have isomorphisms
\begin{itemize}
    \item[(i)] $\EE f_{!!}(\pi^{-1}L_X\otimes_{\pi^{-1}K_X}F)\cong \pi^{-1}L_Y\otimes_{\pi^{-1}K_Y}\EE f_{!!}F $,
    \item[(ii)] $\EE f^{-1}(\pi^{-1}L_Y\otimes_{\pi^{-1}K_Y}G)\cong \pi^{-1}L_X\otimes_{\pi^{-1}K_X}\EE f^{-1}G $,
    \item[(iii)] $(\pi^{-1}L_X\otimes_{\pi^{-1}K_X}F_1)\conv (\pi^{-1}L_X\otimes_{\pi^{-1}K_X}F_2)\cong \pi^{-1}L_X\otimes_{\pi^{-1}K_X} (F_1\conv F_2)$.
\end{itemize}
Assume now in addition that $X$ and $Y$ are complex manifolds, $X\subset \widehat{X}$ and $Y\subset \widehat{Y}$ are relatively compact. Let moreover $F,F_1,F_2\in\EbRcI{K}{\cX}$ and $G\in\EbRcI{K}{\cY}$. Then we have isomorphisms
\begin{itemize}
	\item[(iv)] $\DE{\cX}(\pi^{-1}L_X\otimes F)\cong \pi^{-1}L_X\otimes \DE{\cX}F$,
	\item[(v)] $\EE f_* (\pi^{-1}L_Y\otimes F)\cong \pi^{-1}L_X\otimes \EE f_* F$,
	\item[(vi)] $\EE f^! (\pi^{-1}L_Y\otimes G)\cong \pi^{-1}L_X\otimes \EE f^! G$,
	\item[(vii)] $\RR \cI hom^+ (\pi^{-1}L_X\otimes_{\pi^{-1}K_X}F_1, \pi^{-1}L_X\otimes_{\pi^{-1}K_X}F_2)\cong \pi^{-1}L_X\otimes_{\pi^{-1}K_X}\RR\cI hom^+(F_1,F_2)$.
\end{itemize}
\end{lemma}
\begin{proof}
\begin{itemize}
    \item[(i)] Since all the operations involved are induced by operations on $\DbI{K}{\cX\times\dR_\infty}$, it is enough to prove an isomorphism
$$\RR \tilde{f}_{!!}(\pi^{-1}L_X\otimes_{\pi^{-1}K_X}F)\cong \pi^{-1}L_Y\otimes_{\pi^{-1}K_Y}\RR \tilde{f}_{!!}F $$
for any $F\in \DbI{K}{\cX\times\dR_\infty}$, where $\tilde{f}=f\times\id_{\dR_\infty}$. Noting that $\pi^{-1}L_X\cong \tilde{f}^{-1}\pi^{-1}L_Y$, this follows from the projection formula for ind-sheaves on bordered spaces (see the first isomorphism in \cite[Proposition 3.3.13]{DK16}).
\item[(ii)] Similarly, here we need to prove an isomorphism
$$\RR \tilde{f}^{-1}(\pi^{-1}L_Y\otimes_{\pi^{-1}K_Y}G)\cong \pi^{-1}L_X\otimes_{\pi^{-1}K_X}\RR \tilde{f}^{-1}G $$
for any $G\in\DbI{K}{\cY\times\dR_\infty}$, and this follows as above, using the second isomorphism in \cite[Proposition 3.3.13]{DK16}.
\item[(iii)] The convolution product is defined on $\DbI{K}{\cX\times\dR_\infty}$ by
$$F_1\conv F_2\vcentcolon= \mathrm{R}\mu_{!!}(q_1^{-1}F_1\otimes q_2^{-1}F_2),$$
where the maps $q_1,q_2,\mu\colon \cX\times\dR_\infty^2\to\cX\times\dR_\infty$ are given by the projections and by addition of the real variables, respectively. Hence, one concludes as in (i) and (ii).

\item[(iv)] Since $X\subset \widehat{X}$ is relatively compact and $F$ is $\dR$-constructible, there exists $\cF\in \mathrm{D}^\mathrm{b}_{\rc}(K_{X\times \dR})$ such that $F\cong K^\EE_\cX\conv \cF$. Then, by \cite[Lemma 2.8.3]{EnhPerv} and the definition of the duality functor, we get (with $a$ denoting the involution $(x,t) \mapsto (x,-t) $):
	\begin{align*}
		\DE{\cX} (\pi^{-1}L_X\otimes F) &\cong \DE{\cX}\big(L^\EE_\cX \conv (L_{X\times\dR}\otimes \cF)\big)\cong L^\EE_\cX \conv a^{-1}\mathrm{D}_{X\times \dR}(L_{X\times\dR}\otimes \cF)\\
		&\cong L^\EE_\cX \conv a^{-1}\mathrm{R}\mathcal{H}om_{L_{X\times\dR}}(L_{X\times\dR}\otimes \cF,\omega_{X\times\dR}^L)\\
		&\cong L^\EE_\cX \conv a^{-1}\mathrm{R}\mathcal{H}om_{L_{X\times\dR}}(L_{X\times\dR}\otimes \cF,L_{X\times\dR}\otimes \omega_{X\times\dR}^K)\\
		&\cong L^\EE_\cX \conv a^{-1}\big(L_{X\times\dR}\otimes \mathrm{R}\mathcal{H}om_{K_{X\times\dR}}(\cF,\omega_{X\times\dR}^K)\big)\\
		&\cong \pi^{-1}L_\cX\otimes ( K^\EE_\cX \conv a^{-1} \mathrm{D}_{X\times\dR}\cF) \cong \pi^{-1}L_\cX\otimes \DE{\cX}F.
	\end{align*}
Here, $\omega_{X\times\dR}^L$ denotes the dualizing complex in $\DbL{X\times\dR}$ (and similarly for $K$), and the fourth isomorphism follows from the fact that $\omega_{X\times\dR}^L\cong L_{X\times\dR}[2d_X]$ (and similarly for $K$), where $d_X$ is the complex dimension of $X$, because $X$ is orientable. The fifth isomorphism follows from Lemma~\ref{lemmaIhomExtensions}.
\item[(v)] This follows from (i) and (iv), since under the assumption that $X\subset\widehat{X}$ and $Y\subset\widehat{Y}$ are relatively compact, $f$ is semi-proper (in the sense of \cite[Definition 2.3.5]{EnhPerv}) and hence $\EE f_*\cong \DE{\cY}\EE f_*\DE{\cX}$ and $\EE f_*$ preserves $\dR$-constructibility (see \cite[Proposition 3.3.3(iv)]{EnhPerv}).
\item[(vi)] This follows from (ii) and (iv), using $\EE f^!\cong \DE{\cX}\EE f^{-1}\DE{\cY}$ and the fact that $\EE f^{-1}$ preserves $\dR$-constructibility (see \cite[Proposition 3.3.3(iii)]{EnhPerv}).
\item[(vii)] This follows from (iii) and (iv), since $\RR\cI hom^+(\bullet,\bullet)\cong \DE{\cX}(\,\bullet\,\conv\DE{\cX}(\bullet))$ and the functor $\RR\cI hom^+$ preserves $\dR$-constructibility (see \cite[Proposition 4.9.13]{DK16}).
\end{itemize}
\end{proof}

Extension of scalars also has good properties in connection with the standard and perverse t-structures on enhanced ind-sheaves introduced in \cite{EnhPerv}.

\begin{proposition}\label{prop:tExactnessScalarExt}
	Let $\cX=(X,\widehat{X})$ be a bordered space. The functor
	$$\Phi_{L/K}\colon \EbI{K}{\cX}\to\EbIL{\cX}, H\mapsto \pi^{-1}L_X\otimes_{\pi^{-1}K_X} H$$
	is t-exact with respect to the standard t-structure on enhanced ind-sheaves, i.e.\ we have
	\begin{align*}
		\Phi_{L/K}(\EE^{\leq c}(\mathrm{I}K_\cX))\subset \EE^{\leq c}(\mathrm{I}L_\cX) \qquad \text{and}\qquad
		\Phi_{L/K}(\EE^{\geq c}(\mathrm{I}K_\cX))\subset \EE^{\geq c}(\mathrm{I}L_\cX).
	\end{align*}
	Moreover, if $X$ is a complex manifold and $X\subset\widehat{X}$ is relatively compact, then this functor is t-exact with respect to the perverse (generalized) t-structure on $\dR$-constructible enhanced ind-sheaves, i.e.\ it satisfies
	\begin{align*}
		\Phi_{L/K}({}^p\EE_{\rc}^{\leq c}(\mathrm{I}K_\cX))\subset {}^p\EE_{\rc}^{\leq c}(\mathrm{I}L_\cX) \qquad \text{and}\qquad
		\Phi_{L/K}({}^p\EE_{\rc}^{\geq c}(\mathrm{I}K_\cX))\subset {}^p\EE_{\rc}^{\geq c}(\mathrm{I}L_\cX).
	\end{align*}
\end{proposition}
\begin{proof}
	The compatibility with the standard t-structure is clear on the level of ind-sheaves (tensor products over fields are exact). Therefore, it also commutes with the induced standard t-structure on enhanced ind-sheaves since extension of scalars commutes with the functor $\cL$ (which is just a convolution product, $\cL(H)=K_{t\geq 0}\conv H$).
	
	For the second assertion, we recall the definition of the perverse (generalized) t-structure from \cite{EnhPerv}:
	Let $H\in\EbRcI{K}{\cX}$. Then one has $H\in {}^p\EE_{\rc}^{\leq c}(\mathrm{I}K_\cX)$ if and only if for any $k\in\dZ$ we have
	\begin{align*}
		&\EE i_{(X\setminus Z)_\infty}^{-1}H\in\EE^{\leq c+p(k)}(\mathrm{I}K_{(X\setminus Z)_\infty})&&\textnormal{for some closed subanalytic subset $Z\subset\cX$ of dimension smaller than $k$}\\
		&\EE i_{Z_\infty}^!\DE{\cX}H\in\EE^{\geq -c-\frac{1}{2}-p(k)-k}(\mathrm{I}K_{Z_\infty})&&\textnormal{for any closed subanalytic subset $Z\subset\cX$ of dimension at most $k$}
	\end{align*}
Here, $i_{(X\setminus Z)_\infty}$ and $i_{Z_\infty}$ are the embeddings and we refer to \cite{EnhPerv} for more details.
From this definition, we see that, since $\Phi_{L/K}$ commutes with duality and inverse images by Lemma~\ref{lemmaExtensionCompatibility} (note that $H$ is $\dR$-constructible), the statement reduces to the above compatibility with the standard t-structure. The definition of $H\in {}^p\EE_{\rc}^{\geq c}(\mathrm{I}K_\cX)$ is similar and the proof works along the same lines.
\end{proof}

In particular, the heart ${}^p\EE_{\rc}^0(\mathrm{I}K_\cX)\vcentcolon= {}^p\EE_{\rc}^{\leq 0}(\mathrm{I}K_\cX)\cap {}^p\EE_{\rc}^{\geq 0}(\mathrm{I}K_\cX)$ is preserved by extension of scalars (i.e.\ it is sent to the heart ${}^p\EE_{\rc}^0(\mathrm{I}L_\cX)$). Recall that these hearts are quasi-abelian categories.

\begin{corollary}\label{cor:scalarExt-kernel}
	Let $f$ be a strict morphism in ${}^p\EE_{\rc}^0(\mathrm{I}K_\cX)$, then we have
	\begin{align*}
		\Phi_{L/K}(\ker f)&\cong \ker \Phi_{L/K}(f),& \Phi_{L/K}(\mathop{\mathrm{coker}} f)&\cong \mathop{\mathrm{coker}} \Phi_{L/K}(f),\\
		\Phi_{L/K}(\im f)&\cong \im \Phi_{L/K}(f),& \Phi_{L/K}(\mathop{\mathrm{coim}} f)&\cong \mathop{\mathrm{coim}} \Phi_{L/K}(f).
	\end{align*}
\end{corollary}
\begin{proof}
	As follows from \cite[Lemma 4.3]{Br07} and Proposition~\ref{prop:tExactnessScalarExt}, the functor $\Phi_{L/K}$ (being triangulated and t-exact) transforms strict short exact sequences in ${}^p\EE_{\rc}^0(\mathrm{I}K_\cX)$ into strict short exact sequences in ${}^p\EE_{\rc}^0(\mathrm{I}L_\cX)$. This means that it is left and right exact in the sense of \cite{Sch99}, and hence that it preserves kernels and cokernels of strict morphisms (see Definitions 1.1.12, 1.1.17 and 1.1.18 in loc.~cit.). The statement about images and coimages is then easily derived since kernels and cokernels always determine strict morphisms (see \cite[Remark 1.1.2]{Sch99}).
\end{proof}

We now want to use the concept of $g$-conjugation in order to describe when an object defined over a field $L$ actually comes from an object over a subfield $K$ by extension of scalars. To this end, we introduce the following notion. This is inspired by the corresponding results for Galois descent of vector spaces in \cite{Conrad} (and the references therein), where the notion of $G$-structure on an $L$-vector space was (equivalently) formulated in terms of a semilinear action of the Galois group on the underlying $K$-vector space.

From now on, let $L/K$ be a finite Galois extension with Galois group $G$.

\begin{definition}\label{def:GStructure}
Let $X$ be a topological space and let $\cX$ be a bordered space. A $G$-structure on an object $F\in \Db{L}{X}$ (resp.\ $F\in\DbI{L}{X}$, $F\in \EbI{L}{\cX}$) consists of the data of an isomorphism in $\Db{L}{X}$ (resp.\ $\DbI{L}{X}$, $\EbI{L}{\cX}$)
$$\varphi_g\colon F\overset{\cong}{\longrightarrow} \overline{F}^g$$ for each $g\in G$ such that for any $g,h\in G$, we have $\varphi_{gh}=\overline{\varphi_g}^h\circ \varphi_{h}$. Here, $\overline{\varphi_g}^h\colon \overline{F}^h\overset{\cong}{\longrightarrow}\overline{\overline{F}^g}^h=\overline{F}^{gh}$ denotes the isomorphism induced by $\varphi_g$ via the $h$-conjugation functor $\overline{(\bullet)}^h$.
\end{definition}

The following three statements show that the existence of such a $G$-structure on objects concentrated in one degree (with respect to the standard t-structures, i.e.\ sheaves and ind-sheaves rather than complexes thereof) often implies the existence of a structure over the subfield $K$.

\begin{lemma}\label{lemmaGstructureSheaves}
	Let $X$ be a topological space and let $\cF\in\mathrm{Mod}(L_X)$ be a sheaf equipped with a $G$-structure. Then there exists $\cF_K\in\mathrm{Mod}(K_X)$ such that $\cF\cong L_X\otimes_{K_X} \cF_K$. Moreover, if $\cF$ is a local system of finite rank (resp.\ $\dR$-constructible), then $\cF_K$ is a local system of finite rank (resp.\ $\dR$-constructible).
\end{lemma}
\begin{proof}
	Recall that, for all $g\in G$, the underlying sheaf of $K$-vector spaces of $\cF$ and $\overline{\cF}^g$ is the same. Hence, each $\varphi_g\colon \cF\overset{\cong}{\longrightarrow}\overline{\cF}^g$ defines a $K$-linear automorphism of $\cF$. More precisely, we get a $K$-linear automorphism $\varphi_g^U\colon \cF(U)\overset{\cong}{\longrightarrow}\cF(U)$ for any open $U\subseteq X$ which in addition is $g$-semilinear, meaning that $\varphi_g^U(l\cdot v)=g(l)\cdot \varphi_g^U(v)$ for any $l\in L$ and $v\in\cF(U)$.
	
	Now we set for any open $U\subseteq X$
	$$\cF_K(U)\vcentcolon=\cF(U)^G\vcentcolon=\{v\in \cF(U)\mid \varphi_g^U(v)=v \; \text{for any $g\in G$}\}.$$
	This clearly defines a subsheaf of $\cF$. Moreover, the natural morphisms
	$$L\otimes_K \cF_K(U)\to \cF(U)$$
	are isomorphisms (see e.g.\ \cite[Theorem 2.14]{Conrad}). Consequently, the natural morphism of sheaves
	$$L_X\otimes_{K_X} \cF_K\to \cF$$
	is also an isomorphism (noting that $L_X\otimes_{K_X} \cF_K$ is the sheaf associated to the presheaf $U\mapsto L\otimes \cF_K(U)$). This proves the first assertion.
	
	Assume now that $\cF$ was a local system of finite rank $n$. Let $x\in X$ be an arbitrary point and choose a basis $v_1,\ldots,v_n$ of the stalk $(\cF_K)_x$. Denote by $\widetilde{v}_1,\ldots,\widetilde{v}_n$ the induced elements of the stalk $\cF_x$. Then, by the definition of the stalk and since $\cF$ is a local system, there exists a neighbourhood $U$ of $x$ such that the $v_i$ (resp.\ the $\widetilde{v}_i$) can be viewed as sections of $\cF_K$ (resp.\ $\cF$) on $U$ and such that the $\widetilde{v}_i$ induce a basis of $\cF_y$ for any $y\in U$. Consequently, the $v_i$ also form a basis of $(\cF_K)_y$ for all $y\in U$, and hence $\cF_K$ is a local system of rank $n$.
	
	The statement for $\dR$-constructible sheaves follows immediately since $\dR$-constructible sheaves are local systems on the elements of a stratification, and one can take the same stratification for $\cF_K$ and $\cF$ (recall that extension of scalars behaves nicely with respect to inverse images).
\end{proof}

\begin{lemma}\label{lemmaGstructureIndsheaves}
Let $X$ be a real analytic manifold and let $F\in\mathrm{I}_{\mathrm{suban}}(L_X)$ be a subanalytic ind-sheaf equipped with a $G$-structure. Then there exists $F_K\in\mathrm{I}_{\mathrm{suban}}(K_X)$ such that $F\cong L_X\otimes_{K_X} F_K$.
\end{lemma}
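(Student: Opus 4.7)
The plan is to mimic Lemma~\ref{lemmaGstructureSheaves} at the ind-sheaf level: I will exhibit $F$ as a filtered colimit of compactly supported $\dR$-constructible $L$-subsheaves that are themselves stable under the given $G$-structure, descend each of them individually using the sheaf case, and reassemble the result as an ind-sheaf over $K$.

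First, I write $F \cong \indlim{i \in I} \cF_i$ with $\cF_i \in \mathrm{Mod}_{\rc}^c(L_X)$ and, replacing each $\cF_i$ by its image in $F$, assume the structural maps $\cF_i \hookrightarrow F$ are monomorphisms. For every $i$ I set
$$\cG_i \vcentcolon= \sum_{g \in G} \varphi_g^{-1}\bigl(\overline{\cF_i}^g\bigr) \subseteq F,$$
which is again compactly supported and $\dR$-constructible since $G$ is finite. Using the identification $\overline{\varphi_g^{-1}(\overline{\cF_i}^g)}^h \cong (\overline{\varphi_g}^h)^{-1}(\overline{\cF_i}^{gh})$ (compatibility of conjugation, Lemma~\ref{lemmaConjCompatShv}) and rewriting $\overline{\varphi_g}^h = \varphi_{gh} \circ \varphi_h^{-1}$ via the cocycle $\varphi_{gh} = \overline{\varphi_g}^h \circ \varphi_h$, one finds
$$\overline{\cG_i}^h = \sum_{g \in G} \varphi_h\bigl(\varphi_{gh}^{-1}(\overline{\cF_i}^{gh})\bigr) = \varphi_h(\cG_i),$$
the last equality using that $g \mapsto gh$ permutes $G$. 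Hence the $\varphi_h$ restrict to isomorphisms endowing $\cG_i$ with an induced $G$-structure. Reindexing by finite subsets $J \subseteq I$ and setting $\cG_J \vcentcolon= \sum_{i \in J} \cG_i$ produces a cofinal filtered system of $G$-equivariant compactly supported $\dR$-constructible subsheaves of $F$ whose colimit is still $F$.

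Next, I apply Lemma~\ref{lemmaGstructureSheaves} to each $\cG_J$ to obtain $\cG_{J,K} \in \mathrm{Mod}(K_X)$ with $\cG_J \cong L_X \otimes_{K_X} \cG_{J,K}$; compact support is inherited since $\cG_{J,K}$ injects into $\cG_J$, and $\dR$-constructibility descends by faithful flatness of $L/K$, so $\cG_{J,K} \in \mathrm{Mod}_{\rc}^c(K_X)$. For each inclusion $J_1 \subseteq J_2$ the transition $\cG_{J_1} \hookrightarrow \cG_{J_2}$ is $G$-equivariant by construction, and Lemma~\ref{lemma:MocRc} supplies a canonical identification
$$\mathrm{Hom}_{L_X}(\cG_{J_1},\cG_{J_2}) \cong L \otimes_K \mathrm{Hom}_{K_X}(\cG_{J_1,K},\cG_{J_2,K})$$
that intertwines the induced semilinear $G$-action on the left with the Galois action on the $L$-factor on the right; the $G$-invariants on the right are precisely $\mathrm{Hom}_{K_X}(\cG_{J_1,K},\cG_{J_2,K})$, so each transition descends uniquely to a $K$-morphism. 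Setting $F_K \vcentcolon= \indlim{J} \cG_{J,K} \in \mathrm{I}_{\rc}(K_X)$ and using that extension of scalars commutes with filtered colimits then yields $L_X \otimes_{K_X} F_K \cong \indlim{J} \cG_J \cong F$.

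The hardest step will be the construction of the $G$-equivariant cofinal subsystem in the first paragraph: I have to confirm that sums and preimages of subobjects are well behaved in the ind-sheaf category, that they preserve compact support and $\dR$-constructibility, and that the cocycle relation propagates correctly through the verification $\overline{\cG_i}^h = \varphi_h(\cG_i)$. Once this is in place, the descent step is a formal combination of Lemmas~\ref{lemmaGstructureSheaves} and~\ref{lemma:MocRc}.
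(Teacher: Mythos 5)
Your overall strategy (descend a presentation of $F$ piece by piece using the sheaf-level Lemma~\ref{lemmaGstructureSheaves} and the Hom-comparison of Lemma~\ref{lemma:MocRc}) is reasonable in spirit, but it has a genuine gap at its very first step, and everything after depends on it. You assume that, ``replacing each $\cF_i$ by its image in $F$'', you may take the structural maps $\cF_i\to F$ to be monomorphisms, and you then freely form subobjects $\varphi_g^{-1}(\overline{\cF_i}^g)\subseteq F$ and their sums $\cG_i$ inside $F$, treating all of these as objects of $\mathrm{Mod}^c_{\rc}(L_X)$. This is not justified: in $\mathrm{I}(L_X)$ the image of a morphism from a sheaf to an ind-sheaf is computed as $\varinjlim_j \mathrm{Im}(\cF_i\to\cF_j)$, a formal filtered colimit with surjective transition maps, and such an object is in general \emph{not} isomorphic to a sheaf, let alone to a compactly supported $\dR$-constructible one. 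For instance, for $F=\varinjlim_n L_{[0,1/n]\cup\{1\}}$ (transition maps the natural surjections), the image of the first term $L_{[0,1]}$ in $F$ is all of $F$, which is not in the essential image of $\iota_X$; similarly the familiar non-isomorphism $\iota(L_{(0,1)})\not\cong\varinjlim_a L_{[a,1-a]}$ shows that sub-ind-objects of sheaves need not be sheaves, so kernels and sums formed inside $F$ leave your category. In other words, your argument implicitly requires that every object of $\mathrm{I}_{\rc}(L_X)$ be a filtered colimit of $\dR$-constructible compactly supported \emph{sub}sheaves (an ``essentially monomorphic'' presentation), which is false in general and is nowhere established in your proposal; without it, the $G$-stable cofinal system $\{\cG_J\}$ cannot be constructed as described.

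For comparison, the paper's proof sidesteps all of this ind-object bookkeeping: it invokes the Kashiwara--Schapira equivalence $\mathrm{I}_{\rc}(L_X)\simeq \mathrm{Mod}(L_{X_{\mathrm{sa}}})$ with sheaves on the subanalytic site, observes that this equivalence is compatible with $g$-conjugation, and then runs the fixed-point argument of Lemma~\ref{lemmaGstructureSheaves} verbatim on sections over subanalytic open sets to produce $\fF_K$, transporting back via the equivalence over $K$ (whose compatibility with tensor products is Prelli's result). If you want to salvage your approach, you would either have to prove the essential-monomorphicity statement you are using (unlikely, as above), or replace your presentation-based descent by a genuinely categorical Galois-descent argument; the site-theoretic description is precisely what makes the naive sheaf argument apply without such difficulties, and it also renders your Hom-descent step via Lemma~\ref{lemma:MocRc} unnecessary.
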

\begin{proof}
By \cite[Theorem 6.3.5]{KS01} (note that $\mathrm{I}_{\mathrm{suban}}(L_X)$ is denoted by $\mathrm{I}_{\rc}(L_X)$ in loc.~cit.), there is an equivalence of categories
$$\mathrm{I}_{\mathrm{suban}}(L_X)\simeq \Mod{L}{X_\mathrm{sa}},$$
where $X_{\mathrm{sa}}$ is the subanalytic site associated to $X$ (whose open sets are the subanalytic open subsets of $X$ and whose coverings are required to be locally finite) and $\Mod{L}{X_{\mathrm{sa}}}$ is the category of sheaves of $L$-vector spaces on this site. This equivalence is compatible with $g$-conjugation (where $g$-conjugation on subanalytic sheaves is -- similarly to sheaves on the ordinary topology -- defined by considering $g$-conjugation on sections over any subanalytic open set). Hence, via this equivalence $F$ corresponds to a sheaf $\fF\in \mathrm{Mod}(L_{X_{\mathrm{sa}}})$ of $L$-vector spaces on the subanalytic site, and we have a $G$-structure on $\fF$. Now, with the same arguments as in the proof of Lemma~\ref{lemmaGstructureSheaves} (considering for $U$ the open subanalytic subsets of $X$), we can show that there is a sheaf $\fF_K\in\mathrm{Mod}(K_{X_{\mathrm{sa}}})$ such that $\fF\cong L_X\otimes_{K_X}\fF_K$. Finally, we obtain $F_K$ as the ind-sheaf corresponding to $\fF_K$ via the equivalence
$$\mathrm{I}_{\mathrm{suban}}(K_X)\simeq \mathrm{Mod}(K_{X_{\mathrm{sa}}}).$$
Since this equivalence is compatible with tensor products (cf.\ \cite[Proposition 1.3.1]{Prelli}), we can conclude that $F\cong L_X\otimes_{K_X}F_K$.
\end{proof}

\begin{proposition}\label{prop:KStructure}
	Let $\cX=(X,\widehat{X})$ be a real analytic bordered space such that $X\subset \widehat{X}$ is relatively compact. Let $H\in \mathrm{E}^0(\mathrm{I}L_{\cX})\cap\EbRcI{L}{\cX}$ be equipped with a $G$-structure. Then there exists $H_K\in \mathrm{E}^0(\mathrm{I}K_{\cX})\cap\EbRcI{K}{\cX}$ such that $H\cong \pi^{-1}L_X\otimes_{\pi^{-1}K_X}H_K$.
\end{proposition}
	Let us remark that this proposition shows in particular that $H$ has a $K$-structure in the sense of Definition \ref{def:KStructure}, but it also shows that $H_K$ can be chosen to be $\dR$-construcible.
	
\begin{proof}
	We are given isomorphisms $\varphi_g\colon H\to \overline{H}^g$ for each $g\in G$ satisfying the compatibility conditions from Definition~\ref{def:KStructure}.
	We can therefore identify $H$ with the image of the morphism
	\begin{equation}\label{eq:prodConj}
		\bigoplus_{g\in G}\varphi_g\colon H\to \bigoplus_{g\in G} \overline{H}^g.
	\end{equation}
	
	Due to our assumptions that $X\subset \widehat{X}$ is relatively compact and $H$ is $\dR$-constructible (cf.\ \cite[Definition 3.3.1]{EnhPerv} and recall also \cite[Lemma 4.6.3]{DK16}), we can write $H=L^\EE_\cX\conv \cF$ for some $\cF\in \Modrc{L}{\cX\times \dR_\infty}$ satisfying $L_{\{t\geq 0\}}\conv \cF\cong \cF$.
	
	Since $X\subset \widehat{X}$ is relatively compact, we know from \cite[Proposition 4.7.9]{DK16} that a morphism $\varphi_g\colon L^\EE_\cX\conv \cF\to \overline{L^\EE_\cX\conv \cF}^g\cong L^\EE_\cX\conv \overline{\cF}^g$ (where we have used the natural $G$-structure on $L^\EE_\cX$ in the last isomorphism) comes from a morphism $\varphi'_{g,a}\colon L_{\{t\geq -a\}}\conv \cF\to \overline{\cF}^g$ after applying the functor $L^\EE_\cX\conv\bullet$ (and noting that $L^\EE_\cX\conv L_{\{t\geq -a\}}\cong L^\EE_\cX$) for some sufficiently large $a\geq 0$. Note that if $\varphi'_{g,a}$ induces $\varphi_g$ and $b\geq a$, then the induced map $\varphi'_{g,b}\colon L_{\{t\geq -b\}}\conv \cF\to L_{\{t\geq -a\}}\conv \cF\to \overline{\cF}^g$ also induces $\varphi_g$. Moreover, if two morphisms $\varphi'_{g,a}, \widetilde{\varphi}'_{g,a}\colon L_{\{t\geq -a\}}\conv \cF\to \overline{\cF}^g$ induce $\varphi_g$, then there exists $b\geq a$ such that the induced morphisms $\varphi'_{g,b}$ and $\widetilde{\varphi}'_{g,b}$ coincide. Consequently, if $a\geq 0$ is sufficiently large, then for all $g\in G$, there exists $\varphi'_{g,a}\colon L_{\{t\geq -a\}}\conv \cF\to\overline{\cF}^g$ inducing $\varphi_g$ and satisfying \begin{equation}\label{eq:compatPhi}
		\varphi'_{gh,2a}=\overline{\varphi'_{g,a}}^h\circ \varphi'_{h,a}
	\end{equation}
	Let us fix such an $a\geq 0$.
	
	For any $b\geq a$, the morphism \eqref{eq:prodConj} is therefore induced by
	\begin{equation}\label{eq:prodConj2}
		\varphi'_b\vcentcolon=\bigoplus_{g\in G} \varphi'_{g,b}\colon L_{\{t\geq -b\}}\conv \cF\to \bigoplus_{g\in G} \overline{\cF}^g.
	\end{equation}
	Let us define $\cP\vcentcolon=\im(\varphi'_b)\in\Modrc{L}{\cX\times\dR_\infty}$ and note that this image does not depend on the choice of $b\geq a$. Hence, we have $H\cong L^\EE_\cX\conv \cP$.
	
	We will now show that $\cP$ has a $G$-structure: Clearly, there is a natural $G$-structure on the right-hand side of \eqref{eq:prodConj2} given by an isomorphism for each $h\in G$
	$$\psi_h\colon \bigoplus_{g\in G} \overline{\cF}^g\overset{\sim}{\longrightarrow} \bigoplus_{g\in G} \overline{\cF}^{gh}\cong \overline{\bigoplus_{g\in G} \overline{\cF}^g}^h,$$
	where the first isomorphism is given by a permutation of summands and the second one is induced by the natural identification $\overline{\cF}^{gh}\cong \overline{\overline{\cF}^g}^h$.
	
	Let $h\in G$ and consider the morphism
	$$\psi_h^{-1}\circ \overline{\varphi'_b}^h\colon L_{\{t\geq -b\}}\conv \overline{\cF}^g\longrightarrow \bigoplus_{g\in G} \overline{\overline{\cF}^g}^h\overset{\sim}{\longrightarrow} \bigoplus_{g\in G} \overline{\cF}^g.$$
	Clearly, $\psi_h$ induces an isomorphism $\im(\psi_h^{-1}\circ \overline{\varphi'_b}^h)\cong \im(\overline{\varphi'_b}^h)=\overline{\cP}^h$. To obtain a $G$-structure on $\cP$, it therefore suffices to show that $\cP=\im(\psi_h^{-1}\circ \overline{\varphi'_b}^h)$.
	
	Starting from the compatibility \eqref{eq:compatPhi}, one can show that $\varphi'_{2a}=\psi_h^{-1}\circ \overline{\varphi'_a}^h\circ \varphi'_{h,a}$. Therefore, one gets $\cP=\im(\varphi'_{2a})\subseteq \im(\psi_h^{-1}\circ \overline{\varphi'_a}^h)$. Similarly, from \eqref{eq:compatPhi} (applied for the product $gh^{-1}$), we obtain $\varphi'_{2a}=\psi_{h^{-1}}^{-1}\circ \overline{\varphi'_a}^{h^{-1}}\circ \varphi'_{h^{-1},a}$, and this yields $\psi_{h}^{-1}\circ\overline{\varphi'_{2a}}^h= \varphi'_a \circ \overline{\varphi'_{h^{-1},a}}^{h}$. This gives the inclusion $\im(\psi_{h}^{-1}\circ\overline{\varphi'_{a}}^h)=\im(\psi_{h}^{-1}\circ\overline{\varphi'_{2a}}^h)\subset \im(\varphi'_a)=\cP$, as desired.
	
	It follows that $\cP$ has a $G$-structure and hence, by Lemma~\ref{lemmaGstructureSheaves}, there exists $\cP_K\in \Mod{K}{X\times\dR}$ such that $\cP\cong L_X\otimes_{K_X} \cP_K$. Note that one has $\cP_K\in\Modrc{K}{\cX\times\dR_\infty}$. In particular, $H_K\vcentcolon= K^\EE_\cX\conv \cP_K\in\mathrm{E}^0(\mathrm{I}K_{\cX})\cap\EbRcI{K}{\cX}$ is an object satisfying $H\cong \pi^{-1}L_X\otimes_{\pi^{-1}K_X}H_K$ and this proves the proposition.
\end{proof}

\subsection{$\cD$-modules and the enhanced Riemann--Hilbert correspondence}\label{sectionRH}

Let $X$ be a smooth complex algebraic variety. We denote by $\cD_X$ the sheaf of algebraic differential operators on $X$ and by $\mathrm{Mod}_{\mathrm{hol}}(\cD_X)$ the category of holonomic $\cD_X$-modules. Moreover, we denote by $\Dbhol{X}$ the subcategory of the derived category of $\cD_X$-modules consisting of complexes with holonomic cohomologies. For a morphism $f\colon X\to Y$, we will denote the direct and inverse image operations on $\cD_X$-modules by $f_+$ and $f^+$, respectively. The duality functor for $\cD_X$-modules is denoted by $\dD_X$.

Recall that the classical Riemann--Hilbert correspondence for regular holonomic $\cD$-modules gives an equivalence
$$\Sol{X}\colon \mathrm{D}^\mathrm{b}_{\mathrm{reghol}}(\cD_X)^\op\xlongrightarrow{\sim} \mathrm{D}^\mathrm{b}_{\dC\textrm{-c}}(\dC_X)$$
between the derived category of regular holonomic $\cD$-modules and the derived category of $\dC$-constructible sheaves of complex vector spaces on a smooth algebraic variety $X$ (and similarly on a complex manifold).

In \cite{DK16}, the authors established a generalization of this result to (not necessarily regular) holonomic $\cD$-modules, where the category of enhanced ind-sheaves serves as a target category for the Riemann--Hilbert functor. Concretely, for $X$ a complex manifold, they introduce a fully faithful functor
$$\SolE{X}\colon \Dbhol{X}^\op\hookrightarrow \EbRcI{\dC}{X}.$$

The theories of algebraic and analytic $\cD$-modules are often parallel, but differ in certain aspects. Analytification gives a way of associating to an algebraic $\cD$-module on a smooth algebraic variety $X$ an analytic $\cD$-module on the corresponding complex manifold $X^\mathrm{an}$. However, one generally needs to extend the algebraic $\cD$-module to a completion of $X$ first in order not to lose information during this procedure.

The details on an algebraic version of the Riemann--Hilbert correspondence for holonomic $\cD$-modules have been given in \cite{Ito} and we will briefly recall the construction here.

Let $X$ be a smooth complex algebraic variety. Then by classical results of Hironaka there exists a smooth completion $\wt{X}$, i.e.\ a smooth complete algebraic variety containing $X$ as an open subvariety, such that $\wt{X}\setminus X\subset \wt{X}$ is a normal crossing divisor. We denote by $X_\infty=(X,\widetilde{X})$ the (algebraic) bordered space thus defined and by $j\colon X\hookrightarrow\wt{X}$ the inclusion. Although the space $\widetilde{X}$ is not unique, the bordered space $X_\infty$ is determined up to isomorphism. Furthermore, we write $j_\infty^\mathrm{an}\colon X_\infty^\mathrm{an}=(X^\mathrm{an},\wt{X}^\mathrm{an})\to \wt{X}^\mathrm{an}=(\wt{X}^\mathrm{an},\wt{X}^\mathrm{an})$ for the natural morphism of bordered spaces given by the embedding.

\begin{theorem}[{cf.\ \cite[Theorem 3.12]{Ito}}]
Let $X$ be a smooth complex algebraic variety and $\widetilde{X}$ a smooth completion as above. Then the functor
\begin{align*}
\SolE{X_\infty}\colon \Dbhol{X}^\op&\longrightarrow \EbRcI{\dC}{X_\infty^\mathrm{an}}\\
\cM &\longmapsto \EE (j_\infty^\mathrm{an})^{-1} \SolE{\wt{X}^\mathrm{an}}\big((j_+\cM)^\mathrm{an}\big)
\end{align*}
is fully faithful.
\end{theorem}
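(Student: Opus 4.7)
The approach is to factor the functor $\SolE{X_\infty}$ as the composition
\[
\Dbhol{X}^{\op} \xrightarrow{j_+} \Dbhol{\widetilde{X}}^{\op} \xrightarrow{(-)^{\mathrm{an}}} \Dbhol{\widetilde{X}^{\mathrm{an}}}^{\op} \xrightarrow{\SolE{\widetilde{X}^{\mathrm{an}}}} \EbRcI{\dC}{\widetilde{X}^{\mathrm{an}}} \xrightarrow{\EE (j_\infty^{\mathrm{an}})^{-1}} \EbRcI{\dC}{X_\infty^{\mathrm{an}}}
\]
and prove that this composition is fully faithful. Before that I would check that the target object does not depend (up to canonical isomorphism) on the choice of smooth completion $\widetilde{X}$: given two such completions, they admit a common refinement by blow-ups supported on the boundary, and functoriality of $\EE (-)^{-1}$ along the resulting morphism of bordered spaces provides the comparison.

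The first three functors are fully faithful, essentially for formal reasons. For the open immersion $j$ one has the adjunction $j^+ \dashv j_+$ together with $j^+ j_+ \cong \id$, which yields
\[
\mathrm{Hom}_{\Dbhol{\widetilde{X}}}(j_+\cM, j_+\cN) \cong \mathrm{Hom}_{\Dbhol{X}}(j^+ j_+\cM, \cN) \cong \mathrm{Hom}_{\Dbhol{X}}(\cM, \cN).
\]
The analytification functor $\Dbhol{\widetilde{X}} \to \Dbhol{\widetilde{X}^{\mathrm{an}}}$ is fully faithful by GAGA for holonomic $\cD$-modules on the proper smooth variety $\widetilde{X}$, and $\SolE{\widetilde{X}^{\mathrm{an}}}$ is fully faithful by the D'Agnolo--Kashiwara theorem.

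The main obstacle is the last step: being a localization-type functor, $\EE (j_\infty^{\mathrm{an}})^{-1}$ is far from being fully faithful on all of $\EbRcI{\dC}{\widetilde{X}^{\mathrm{an}}}$. One must show that it \emph{is} fully faithful on the essential image of enhanced solutions of analytifications of algebraic holonomic $\cD$-modules on $\widetilde{X}$. Invoking the adjunction $\EE (j_\infty^{\mathrm{an}})^{-1} \dashv \EE j_{\infty *}^{\mathrm{an}}$, this amounts to showing that for every $\cM \in \Dbhol{X}$ the unit morphism
\[
\SolE{\widetilde{X}^{\mathrm{an}}}((j_+\cM)^{\mathrm{an}}) \longrightarrow \EE j_{\infty *}^{\mathrm{an}} \EE (j_\infty^{\mathrm{an}})^{-1} \SolE{\widetilde{X}^{\mathrm{an}}}((j_+\cM)^{\mathrm{an}})
\]
is an isomorphism. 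This is a local question around the boundary divisor $D = \widetilde{X} \setminus X$. I expect to reduce, via compatibility of $\SolE{-}$ with $\cD$-module operations and the Kedlaya--Mochizuki theorem on local decompositions of meromorphic flat connections along a normal crossing divisor, to the case of elementary meromorphic models of the form $\cE^{\varphi} \otimes \cO_{\widetilde{X}}(*D)$, where the enhanced solution is an explicit exponential object of type $\dE^{-\real\varphi}_\dC$ and the unit morphism can be verified to be an isomorphism by direct inspection, using the definition of the bordered space $X_\infty^{\mathrm{an}}$ and the explicit description of $\kk^\EE$ and $\dE^{\phi}_\kk$ recalled in the introductory part of this section.
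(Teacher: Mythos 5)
A preliminary remark: the paper does not prove this statement at all --- it is quoted from Ito's work (\cite[Theorem~3.12]{Ito}), so your proposal has to be measured against the standard argument rather than against a proof in the text. Your skeleton is exactly that standard argument: full faithfulness of $j_+$ via $j^+j_+\cong\id$ and adjunction, full faithfulness of analytification on the proper variety $\wt{X}$ (a GAGA-type statement for coherent/holonomic $\cD$-modules), full faithfulness of $\SolE{\wt{X}^\mathrm{an}}$ by D'Agnolo--Kashiwara, and the correct reduction of the last step, via the adjunction $\EE(j_\infty^\mathrm{an})^{-1}\dashv \EE j^\mathrm{an}_{\infty *}$, to showing that the unit morphism is an isomorphism on objects in the essential image. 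The independence of the choice of completion is also handled correctly. Up to this point the proposal is sound.

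The gap is in your proposed verification of the unit isomorphism. The point you are missing is that $j_+\cM\cong (j_+\cM)(*D)$ for $D=\wt{X}\setminus X$, and that D'Agnolo--Kashiwara prove compatibility of the enhanced solution functor with localization along a hypersurface (see \cite[\S 9.4]{DK16}): $\SolE{\wt{X}^\mathrm{an}}(\cN(*D))\cong \mathrm{R}\cI hom\bigl(\pi^{-1}\dC_{X^\mathrm{an}},\SolE{\wt{X}^\mathrm{an}}(\cN)\bigr)$. Since $\EE j^\mathrm{an}_{\infty *}\EE(j^\mathrm{an}_\infty)^{-1}H\cong \mathrm{R}\cI hom(\pi^{-1}\dC_{X^\mathrm{an}},H)$ for the open bordered embedding $j^\mathrm{an}_\infty$, the unit at $H=\SolE{\wt{X}^\mathrm{an}}\bigl((j_+\cM)^\mathrm{an}\bigr)$ is an isomorphism with no further reduction. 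By contrast, the Kedlaya--Mochizuki route you sketch does not go through as stated: $(j_+\cM)^\mathrm{an}$ is in general \emph{not} a meromorphic flat connection (its singular support can meet $X$), so a reduction to meromorphic models already requires an extra d\'evissage over the interior; and even for meromorphic connections the decomposition into elementary models $\cE^{\varphi}$ exists only after ramification and only formally/asymptotically on sectors of the real blow-up, not as a direct sum decomposition in $\Dbhol{\wt{X}^\mathrm{an}}$, so ``compatibility of $\SolE{}$ with operations'' does not reduce the unit question to the objects $\dE^{\mathrm{Re}\,\varphi}_\dC$; you would in effect have to redo the sectorial normal-form and gluing arguments that underlie D'Agnolo--Kashiwara's localization theorem. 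In short: right factorization, but the final step should simply invoke the known localization compatibility rather than the Kedlaya--Mochizuki reduction, which as written is not available.
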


We will often write $\EbRcI{\dC}{X_\infty}$ for $\EbRcI{\dC}{X_\infty^\mathrm{an}}$, since the category of enhanced ind-sheaves was only defined for analytic bordered spaces (algebraic vartieties are not good topological spaces), and hence there is no risk of confusion. If $f\colon X\to Y$ is a morphism of smooth complex algebraic varieties, we denote by $f_\infty\colon X_\infty^\mathrm{an}\to Y_\infty^\mathrm{an}$ the induced morphism of (analytic) bordered spaces. The enhanced solution functor satisfies many convenient compatibilities, which we summarize in the following lemma. We refer to \cite[Corollary 9.4.10]{DK16} (see also \cite[Proposition 3.13]{Ito}) for statements (i)--(iii). The fourth point easily follows combining \cite[Theorem 9.1.2(iv), Corollary 9.4.9, Lemma 4.3.2, Proposition 4.9.13]{DK16} (see also \cite[Corollary 7.7.8]{KSRegIrr}).

\begin{lemma}\label{lemmaAlgRHCompat}
Let $f\colon X\to Y$ be a morphism of smooth complex algebraic varieties.
\begin{itemize}
    \item[(i)] Let $\cM\in\Dbhol{X}$, then there is an isomorphism in $\EbI{\dC}{Y_\infty}$
    $$\SolE{Y_\infty}(f_+\cM)\cong \EE f_{\infty !!} \SolE{X_\infty}(\cM)[d_X-d_Y],$$ where $d_X$ and $d_Y$ are the (complex) dimensions of $X$ and $Y$, respectively.
    \item[(ii)] Let $\cN\in\Dbhol{Y}$, then there is an isomorphism in $\EbI{\dC}{X_\infty}$
    $$\SolE{X_\infty}(f^+\cN)\cong \EE f_\infty^{-1}\SolE{Y_\infty}(\cN).$$
    \item[(iii)] Let $\cM\in\Dbhol{X}$, then there is an isomorphism in $\EbIC{X_\infty}$
    $$\SolE{X_\infty}(\dD_X\cM)\cong \DE{X_\infty}\SolE{X_\infty}(\cM)[-2d_X],$$
    where $d_X$ is the (complex) dimension of $X$.
    \item[(iv)] Let $\cM\in\Dbhol{X}$ and $\cR\in\mathrm{D}^\mathrm{b}_\mathrm{reghol}(\cD_X)$, then there is an isomorphism in $\EbI{\dC}{X_\infty}$
    $$\SolE{X_\infty}(\cR\otimes_{\cO_X}\cM)\cong \pi^{-1}\Sol{X}(\cR)\otimes \SolE{X_\infty}(\cM).$$
\end{itemize}
\end{lemma}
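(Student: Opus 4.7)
The plan is to reduce each of the three statements to the analytic enhanced Riemann--Hilbert correspondence of D'Agnolo--Kashiwara \cite[Corollary 9.4.10]{DK16}, using the construction of $\SolE{X_\infty}$ recalled just above from \cite[Proposition 3.13]{Ito}. The essential input is that $\SolE{X_\infty}$ is defined as the restriction to the bordered space $X_\infty^{\mathrm{an}}$ of the analytic enhanced solutions of an extended $\cD$-module on a smooth completion $\widetilde{X}$, so each compatibility in the algebraic setting should follow by tracking the corresponding analytic statement through this restriction.

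For (i), I would first replace $\widetilde{X}$ by a suitable blow-up -- permissible since $X_\infty$ is independent of the choice of completion -- so that $f$ extends to a morphism $\bar{f}\colon \widetilde{X}\to \widetilde{Y}$ of smooth completions; this $\bar{f}$ is automatically proper since $\widetilde{Y}$ is complete. The $\cD$-module identity $j'_+f_+ = \bar{f}_+j_+$ together with compatibility of analytification with proper algebraic direct images yields $(j'_+f_+\cM)^{\mathrm{an}}\cong \bar{f}^{\mathrm{an}}_+(j_+\cM)^{\mathrm{an}}$. Applying $\SolE{\widetilde{Y}^{\mathrm{an}}}$ and invoking the analytic proper-pushforward compatibility gives an isomorphism with $\EE \bar{f}^{\mathrm{an}}_{!!}\SolE{\widetilde{X}^{\mathrm{an}}}((j_+\cM)^{\mathrm{an}})[d_X-d_Y]$. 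Restricting to the open bordered subspace via $\EE(j'_\infty)^{-1}$ and using the definition of $\EE f_{\infty!!}$ for morphisms of bordered spaces -- essentially that it arises as the restriction of $\EE\bar{f}^{\mathrm{an}}_{!!}$ to the bordered space -- concludes the argument.

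For (ii), the proof is analogous but cleaner: after extending $f$ to $\bar{f}$ as above, apply the inverse image compatibility from \cite[Corollary 9.4.10]{DK16} on $\widetilde{Y}^{\mathrm{an}}$, then restrict via $\EE(j_\infty^{\mathrm{an}})^{-1}$. Since inverse images for morphisms of bordered spaces are defined exactly by such a restriction, no base change is required. For (iii), the regularity of $\cR$ is crucial, as it ensures that $\Sol{X}(\cR)$ is a genuine constructible sheaf rather than an enhanced object. Working on $\widetilde{X}^{\mathrm{an}}$, I would apply the analytic tensor-product compatibility after pushing $\cR\otimes_{\cO_X}\cM$ forward via $j_+$, using that $\cR$ being regular forces $j_+\cR$ to be regular holonomic on $\widetilde{X}$ (so that the analytic version applies), and then restrict via $\EE(j_\infty^{\mathrm{an}})^{-1}$, noting that $\pi^{-1}(\bullet)\otimes(\bullet)$ is compatible with restriction to bordered subspaces by construction.

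The principal obstacle I expect is the bookkeeping associated to proving that $\EE f_{\infty!!}$ and $\EE f_\infty^{-1}$ on algebraic bordered spaces behave as expected with respect to the open restrictions $\EE(j_\infty^{\mathrm{an}})^{-1}$: each statement becomes intuitively clear once one identifies the bordered-space operation with the restriction of the corresponding analytic operation on the completions, but verifying this identification rigorously requires a careful unraveling of the bordered-space definitions, which is essentially the content of \cite[Proposition 3.13]{Ito} and can be cited rather than reproved.
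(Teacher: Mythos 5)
Your proposal is correct and takes essentially the same route as the paper: the paper offers no independent proof of this lemma, but simply cites \cite[Corollary 9.4.10]{DK16} and \cite[Proposition 3.13]{Ito}, which is exactly the reduction to the analytic enhanced Riemann--Hilbert compatibilities plus the bordered-space bookkeeping that your sketch outlines (and ultimately defers to Ito for). One minor slip: properness of the extension $\bar{f}\colon\widetilde{X}\to\widetilde{Y}$ follows from completeness of $\widetilde{X}$ (a morphism from a complete variety to a separated one is proper), not from completeness of $\widetilde{Y}$.
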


Let $f$ be an algebraic function on $X$. Denote by $\cE^f\in\mathrm{Mod}_{\mathrm{hol}}(\cD_X)$ the algebraic $\cD_X$-module associated to differential operators with solutions $e^f$. Then there is an isomorphism
$$\SolE{X_\infty}(\cE^f)\cong \dE^{\real f}_\dC.$$

Moreover, the usual holomorphic solutions of an algebraic holonomic $\cD_X$-module $\cM$ are recovered from its enhanced solutions via the sheafification functor (see \cite[Lemma 3.16]{Ito}):
$$\mathsf{sh}_{X_\infty}\big(\SolE{X_\infty}(\cM)\big)\cong \Sol{X}(\cM).$$

For later use, we state the following lemma, applying a result of T.\ Mochizuki about the enhanced solutions of meromorphic connections to the algebraic enhanced solution functor.
\begin{lemma}\label{lemmaMochizukiDegreeZero}
Let $X$ be a smooth complex algebraic variety and let $\cM\in\mathrm{Mod}_{\mathrm{hol}}(\cD_X)$ be an integrable connection on $X$ (i.e.\ a holonomic $\cD_X$-module, locally free as an $\cO_X$-module). Then $\SolE{X_\infty}(\cM)\in \mathrm{E}^0(\mathrm{I}\dC_{X_\infty})$.
\end{lemma}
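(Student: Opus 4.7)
The plan is to reduce the statement to a known result of Mochizuki concerning the enhanced solutions of meromorphic flat bundles, and then to invoke t-exactness of pullback along an open embedding of bordered spaces. Concretely, I would fix a smooth algebraic completion $\widetilde{X}$ of $X$ such that $D \vcentcolon= \widetilde{X}\setminus X$ is a normal crossing divisor, so that $X_\infty=(X,\widetilde{X})$; by the very definition of the algebraic enhanced solution functor,
\[
\SolE{X_\infty}(\cM) \cong \EE (j_\infty^{\mathrm{an}})^{-1}\,\SolE{\widetilde{X}^{\mathrm{an}}}\bigl((j_+\cM)^{\mathrm{an}}\bigr).
\]

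Since $\cM$ is an integrable connection on $X$, the algebraic direct image $j_+\cM$ is a meromorphic connection on $\widetilde{X}$ along $D$, whose analytification $(j_+\cM)^{\mathrm{an}}$ is a meromorphic flat bundle on $\widetilde{X}^{\mathrm{an}}$ with poles contained in $D^{\mathrm{an}}$. I would then invoke the result of Mochizuki, which asserts that the enhanced solution complex of such a meromorphic flat bundle lies in the heart of the standard t-structure on enhanced ind-sheaves; that is, $\SolE{\widetilde{X}^{\mathrm{an}}}\bigl((j_+\cM)^{\mathrm{an}}\bigr)\in \mathrm{E}^0(\mathrm{I}\dC_{\widetilde{X}^{\mathrm{an}}})$. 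This is the conceptual heart of the argument and rests, in turn, on the Kedlaya--Mochizuki theorem on the existence of good formal structures (after a sequence of blow-ups above $D$) together with an explicit computation of enhanced solutions in the good normal crossing case, where only the degree-zero cohomology of the solution complex survives.

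To finish, it remains to verify that the inverse image functor $\EE (j_\infty^{\mathrm{an}})^{-1}$ is t-exact for the standard t-structures on $\EbI{\dC}{\widetilde{X}^{\mathrm{an}}}$ and $\EbI{\dC}{X_\infty}$. This is a formal consequence of the way the t-structure is set up: by definition it is induced, via the fully faithful left adjoint $\cL$, from the standard t-structure on the derived category of ind-sheaves on the product with $\sP$, and under this identification the enhanced inverse image along an open embedding of bordered spaces is computed by an open restriction of ind-sheaves, which is exact. The main obstacle in writing the proof out carefully will thus be the precise citation and formulation of Mochizuki's input; granted that, the remaining steps amount to unpacking definitions and checking the compatibility of the standard t-structures with the algebraic enhanced Riemann--Hilbert construction.
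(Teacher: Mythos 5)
Your proposal is correct and follows essentially the same route as the paper: it reduces to the fact that $(j_+\cM)^{\mathrm{an}}$ is a meromorphic connection with poles along the boundary divisor, invokes Mochizuki's result (cited in the paper as \cite[§9, Corollary 5.21]{MocCurveTest}) to place its enhanced solutions in the heart, and concludes by the exactness of $\EE (j_\infty^{\mathrm{an}})^{-1}$ for the standard t-structures, which the paper cites as \cite[Proposition 2.7.3(iv)]{EnhPerv} rather than arguing directly as you sketch.
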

\begin{proof}
If $\cM$ is such a module, then $(j_+\cM)^\mathrm{an}\in \mathrm{Mod}_{\mathrm{hol}}(\cD_{\widetilde{X}^\mathrm{an}})$ is a meromorphic connection with poles on $D\vcentcolon=\widetilde{X}^\mathrm{an}\setminus X^\mathrm{an}$., i.e. $\mathrm{SingSupp}((j_+\cM)^\mathrm{an})\subseteq D$ and $(j_+\cM)^\mathrm{an}\cong (j_+\cM)^\mathrm{an}\otimes_{\cO_{\widetilde{X}^\mathrm{an}}}\cO_{\widetilde{X}^\mathrm{an}}(*D)$. Combining results from \cite[§9, Corollary 5.21]{MocCurveTest}, it follows that $\SolE{\widetilde{X}^\mathrm{an}}((j_+\cM)^\mathrm{an})\in \mathrm{E}^0(\mathrm{I}\dC_{\widetilde{X}^\mathrm{an}})$. Since $\EE (j_\infty^\mathrm{an})^{-1}$ is exact with respect to the standard t-structures on $\EbI{\dC}{\widetilde{X}^\mathrm{an}}$ and $\EbI{\dC}{X_\infty}$ (see \cite[Proposition 2.7.3(iv)]{EnhPerv}), the assertion follows.
\end{proof}

Finally, let us recall that the functor $\SolE{X_\infty}(\bullet)[d_X]$ (where $d_X$ is the complex dimension of $X$) is exact with respect to the standard t-structure on $\Dbhol{X}$ and the middle perversity generalized t-structure on $\EbRcI{\dC}{X_\infty}$. (Recall the notation from Section \ref{sect:enhanced}.) In particular, it sends $\mathrm{Mod}_{\mathrm{hol}}(\cD_X)$ to ${}^{1/2}\EE_{\rc}^0(\mathrm{I}\dC_{X_\infty})$.

We prove the following lemma for later use. Recall that a morphism $h$ in a quasi-abelian category is called \emph{strict} if the canonical morphism $\mathop{\mathrm{coim}} h \to \mathop{\mathrm{im}} h$ is an isomorphism. (We refer to \cite{Sch99} for a detailed study of quasi-abelian categories.)

\begin{lemma}\label{lemma:SolEstrictness}
	Let $g\colon \cM\to\cN$ be a morphism of holonomic $\cD_X$-modules. Then the morphism $$\SolE{X_\infty}(g)[d_X]\colon \SolE{X_\infty}(\cN)[d_X]\to\SolE{X_\infty}(\cM)[d_X]$$
	is strict in the quasi-abelian category ${}^{1/2}\EE_{\rc}^0(\mathrm{I}\dC_{X_\infty})$.
\end{lemma}
\begin{proof}
	Since $\SolE{X_\infty}(\bullet)[d_X]$ is exact with respect to the standard t-structure on $\Dbhol{X}$ and the middle perversity generalized t-structure on $\EbRcI{\dC}{X_\infty}$, it follows (cf.\ \cite[Lemma 4.3]{Br07}) that it sends strict short exact sequences in $\mathrm{Mod}_{\mathrm{hol}}(\cD_X)$ to strict short exact sequences in ${}^{1/2}\EE_{\rc}^0(\mathrm{I}\dC_{X_\infty})$. In particular, it is exact in the sense of \cite{Sch99} and hence preserves kernels and cokernels of arbitrary morphisms in $\mathrm{Mod}_{\mathrm{hol}}(\cD_X)^{\op}$, because the latter category is abelian and hence any morphism is strict. It follows that it also preserves images and coimages.
	
	Now let $g$ be as above. Then the natural isomorphism $\mathop{\mathrm{coim}} g\to \mathop{\mathrm{im}} g$ in $\mathrm{Mod}_{\mathrm{hol}}(\cD_X)$ is mapped to a morphism
	$$\SolE{X_\infty}(\mathop{\mathrm{im}} g)[d_X]\to \SolE{X_\infty}(\mathop{\mathrm{coim}} g)[d_X],$$
	which is still an isomorphism and coincides with the natural morphism
	$$\mathop{\mathrm{coim}}\SolE{X_\infty}(g)[d_X]\to \mathop{\mathrm{im}}\SolE{X_\infty}(g)[d_X].$$
	Hence, $\SolE{X_\infty}(g)[d_X]$ is strict.
\end{proof}

\section{Hypergeometric $\cD$-modules and families of Laurent polynomials}
\label{sec:Laurent}

We briefly recall a few fundamental facts on one-dimensional hypergeometric $\cD$-modules. Standard references are \cite{Ka}, etc. (one may follow \cite[section 2]{SevCast}).
As a matter of notation, we denote by $\dG_m$ a $1$-dimensional algebraic torus, if we want to fix a coordinate, say, $q$, on it, we also write $\Gm$.
\begin{definition}
Let $m,n\in \dZ_{\geq 0}$
and let $\alpha_1,\ldots,\alpha_n,\beta_1,\ldots,\beta_m\in \dC$ be given. Consider the differential operator in one variable
$$
P:=\prod_{i=1}^{n}\left(q\partial_q-\alpha_i\right)-q\cdot \prod_{j=1}^{m}\left(q\partial_q-\beta_j\right),
$$
and the left $\cD_{\Gm}$-module
$$
\cH(\alpha;\beta):=\cD_{\Gm}/\cD_{\Gm} \cdot P \in \mathrm{Mod}_\mathrm{hol}(\cD_{\Gm}).
$$
$\cH(\alpha;\beta)$  is called one-dimensional (or univariate) hypergeometric $\cD$-module.
\end{definition}
\begin{remark}
We will
suppose from now on that all $\alpha_i$ and all $\beta_j$ are real numbers. Although this is not strictly necessary for what follows, it simplifies some arguments, and corresponds to the cases of interest, specifically if one studies Hodge properties of hypergeometric systems.
\end{remark}
We will mainly be concerned with the case where $n\neq m$ and where the system $\cH(\alpha;\beta)$ is irreducible. In this case, we have the following important fact due to Katz.
\begin{proposition}[{see \cite[Proposition 2.11.9, Proposition 3.2]{Ka}}]\label{prop:PropHyp}
Suppose that $n\neq m$. Let $\alpha_i,\beta_j\in \dR$ be given, and consider the $\cD_{\Gm}$-module $\cH(\alpha;\beta)$ as defined above. Then:
\begin{enumerate}
    \item $\cH(\alpha;\beta)$ is irreducible if and only if
    for all $i\in\{1,\ldots,n\}$
and $j\in \{1,\ldots,m\}$ we have $\beta_j-\alpha_i\notin \dZ$.
    \item If $\cH(\alpha;\beta)$ is irreducible, then for any $k,l\in \dZ$, for any $i\in\{1,\ldots,n\}$
and for any $j\in \{1,\ldots,m\}$ we have that
$$
\cH(\alpha;\beta) \cong
\cH(\alpha_1,\ldots,\alpha_i+k,\ldots,\alpha_n;
\beta_1,\ldots,\beta_j+l,\ldots,\beta_m).
$$
Hence, for irreducible hypergeometric modules $\cH(\alpha;\beta)$  we may assume, up to isomorphism, that $\alpha_i,\beta_j\in [0,1)$ with $\beta_j\neq\alpha_i$ for all $i,j$.
\end{enumerate}
\end{proposition}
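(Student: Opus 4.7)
The plan is to handle the two statements separately, relying on the presentation $\cH(\alpha;\beta)=\cD_{\Gm}/\cD_{\Gm}P$ and on the fundamental commutation $f(E)\cdot q=q\cdot f(E+1)$ in $\cD_{\Gm}$, where $E\vcentcolon=q\partial_q$.

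For part (1), I first rephrase irreducibility of $\cH(\alpha;\beta)$ as irreducibility of the operator $P$ in $\cD_{\Gm}$: every non-zero proper submodule of $\cH(\alpha;\beta)$ is cyclic and hence corresponds to a non-trivial right factor of $P$. For the direction ``some $\beta_j-\alpha_i\in\dZ\Rightarrow$ reducible'', I would apply a direct integer shift (built from the commutation relation) to reduce to the case $\alpha_i=\beta_j$, in which the common right factor $E-\alpha_i$ can be extracted from both summands of $P$. For the converse, I would analyse the formal types at $q=0$ (regular singular, exponents $\alpha_i\bmod\dZ$) and at $q=\infty$ (a regular part with exponents $\beta_j\bmod\dZ$, together with an irregular part of slope $1/|n-m|$ when $n\neq m$), and argue that a proper submodule would have to select a proper subset of exponents at both singular points; matching these subsets globally, using also the additional regular singularity at $q=1$, then forces some $\alpha_i\equiv\beta_j\pmod{\dZ}$, contradicting the hypothesis.

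For part (2), iteration reduces us to proving $\cH(\alpha;\beta)\cong\cH(\alpha';\beta)$ with $\alpha_i'=\alpha_i+1$. Under part (1) both modules are simple, so any non-zero $\cD_{\Gm}$-linear map between them is automatically an isomorphism, and it suffices to exhibit $\varphi([1])=[Q]$ for some $Q\in\cD_{\Gm}$. Well-definedness amounts to the identity $PQ\in\cD_{\Gm}P'$, which becomes a finite manipulation in $\cD_{\Gm}$ after expanding both sides using the commutation relation and the observation that $P$ and $P'$ differ only by the single summand $\pm\prod_{k\neq i}(E-\alpha_k)$. A more conceptual alternative is to pass through the Mellin transform, under which $\cH(\alpha;\beta)$ becomes (the middle extension of) a rank-one difference module on $\Spec\dC[s]$ governed by $\prod_i(s-\alpha_i)/\prod_j(s-\beta_j)$; the shift $\alpha_i\mapsto\alpha_i+1$ corresponds to multiplication by a unit on the generic fibre, and this generic isomorphism extends uniquely to one of middle extensions.

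The hard step will be the converse direction in part (1): producing an explicit factorization in the easy direction is a short calculation, whereas ruling out \emph{all} non-trivial factorizations under the non-integrality hypothesis requires interlacing the formal local structures at the three singularities $0,1,\infty$ with the global $\cD_{\Gm}$-module structure, which in Katz's original treatment ultimately appeals to the cohomological rigidity (index of rigidity equal to $2$) of the hypergeometric system.
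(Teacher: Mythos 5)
This proposition is not proved in the paper at all: it is imported verbatim from Katz \cite{Ka} (Propositions 2.11.9 and 3.2), so there is no internal argument to compare yours against, and your sketch has to stand on its own. Part (2) of your plan is essentially fine and is the standard argument: the contiguity operator exists unconditionally (e.g.\ with $E=q\partial_q$ one checks $P_{\alpha'}\cdot(E-\alpha_i)=(E-\alpha_i-1)\cdot P_{\alpha}$ for $\alpha_i'=\alpha_i+1$), it induces a nonzero map between the two modules, and since the irreducibility criterion of (1) depends only on the parameters modulo $\dZ$, source and target are both simple, so the map is an isomorphism; the Mellin-transform variant is also viable.

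The genuine gaps are in part (1). For the direction ``some $\beta_j-\alpha_i\in\dZ$ implies reducible'', reducing to $\alpha_i=\beta_j$ ``by a direct integer shift'' is circular as written: the shift isomorphism is exactly part (2), which you establish only when both source and target are simple, and an irreducible module can never be shifted into one with $\alpha_i=\beta_j$ (the target would be reducible). To repair this you must argue with the unconditional intertwiners themselves: assuming $\cH(\alpha;\beta)$ irreducible, a chain of such nonzero maps, together with the fact that for $n\neq m$ all modules involved are $\cO$-coherent connections on $\Gm$ (hence torsion-free, so a nonzero map to or from a simple module of the same generic rank is forced to be an isomorphism), identifies it with the visibly reducible module having $\alpha_i=\beta_j$ --- none of this is in your sketch. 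More seriously, the converse (``no integer differences implies irreducible'') is only gestured at, and the gesture contains an error: since $n\neq m$, the leading coefficient of $P$ is a power of $q$ times a nonzero constant, a unit on $\Gm$, so $\cH(\alpha;\beta)$ is smooth on all of $\Gm$ and there is no ``additional regular singularity at $q=1$'' (that singularity exists only in the excluded case $n=m$). Hence the proposed matching of exponent sets at $0$, $1$, $\infty$ cannot be run as stated; one has to compare a putative subconnection's exponents at $0$ with those of the regular part at $\infty$ while controlling the slope-$1/|n-m|$ irregular part, and this is precisely where Katz's rigidity/middle-convolution machinery does the work that your outline defers rather than supplies. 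Finally, your identification of submodules with right factors of $P$ needs the hypothesis $n\neq m$: it is valid over the Euclidean ring $\dC(q)\langle\partial_q\rangle$ because $\cH(\alpha;\beta)$ is then an $\cO$-coherent connection, whereas left ideals of $\cD_{\Gm}$ itself need not be principal.
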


A major step in our approach to the existence of Betti structures is a geometric realization of $\cH(\alpha;\beta)$ via a family of Laurent polynomials.
We will prove our main results in several steps, first under some special assumptions on the numbers $\alpha_i$ and $\beta_j$. Namely, suppose that  $n > m$ and
that $\alpha_1=0$. This latter hypothesis is
a technical but crucial assumption to obtain the geometric realization of $\cH(\alpha;\beta)$. We write
$\alpha=(0,\alpha_2,\ldots,\alpha_n)\in \dR^n$,
$\beta=(\beta_1,\ldots,\beta_m)\in \dR^m$ and we define $\gamma=(\gamma_1,\ldots,\gamma_{N-1}):=(\beta_1,\ldots,\beta_m,\alpha_2,\ldots,\alpha_n)\in \dR^{N-1}$,
where $N=n+m$.

Here and later we will use twisted structure sheaves on algebraic tori, defined as follows.
Let $k,l\in \dN_0$ be arbitrary
(but excluding $(k,l)=(0,0)$), and consider any vector
$\gamma=(\gamma_1,\ldots,\gamma_k)\in \dR^k$. We consider the torus $\dG_m^{k+l}$ and the module
\begin{equation}\label{eq:DefOG}
\cO_{\dG_m^{k+l}}^\gamma:=\cO_{\dG_m^k}^\gamma\boxtimes\cO_{\dG_m^l},
\quad \quad \quad \textup{where}
\quad \quad \quad \quad
\cO_{\dG_m^k}^\gamma := \cD_{\dG_m^k}/\left(\partial_{x_i} x_i +\gamma_i\right)_{i=1,\ldots, k}
\end{equation}
In particular, when $N=n+m$ is as above, we put $\dG=\dG_m^{N-1}\times \Gm$ and we consider the sheaves
$\cO_{\dG_m^{N-1}}^\gamma$ and $
\cO_\dG^\gamma=\cO_{\dG^{N-1}}^\gamma\boxtimes \cO_{\Gm}$.
Then the following holds
\begin{proposition}\label{prop:HypGMSystem}
Let $f:=x_1+\ldots+x_m+\frac{1}{x_{m+1}}+\ldots+ \frac{1}{x_{N-1}}+q\cdot x_1\cdot\ldots\cdot x_{N-1}\in \cO_\dG$. Consider the
elementary irregular module $\cE^{\gamma,f}:=\cO^\gamma_\dG\cdot e^f=\cO^\gamma_\dG\otimes_{\cO_\dG}\cE^f$, where
$\gamma=(\beta_1,\ldots,\beta_m,\alpha_2,\ldots,\alpha_n)\in[0,1)^{N-1}$ is as above (i.e., $n>m$ and such that $\alpha_1=0$ and that $\alpha_i\neq \beta_j$ for all $i\in\{1,\ldots,n\}$ and $j\in\{1,\ldots,m\}$).
Write $p:\dG\twoheadrightarrow\Gm$
for the projection to the last factor.

Then we have $\cH^i p_+ \cE^{\gamma,f} =0$ for all $i\neq 0$ and
\begin{equation}\label{eq:FourierHypergeom}
\cH(\alpha;\beta)=\cH(0,\alpha_2,\ldots,\alpha_n;\beta_1,\ldots,\beta_m)\cong\kappa^+\cH^0 p_+ \cE^{\gamma,f}
=\kappa^+  p_+ \cE^{\gamma,f},
\end{equation}
where $\kappa:\Gm \stackrel{\cong}{\longrightarrow} \Gm$ sends
$q$ to $(-1)^m\cdot q$.
\end{proposition}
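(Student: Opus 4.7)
My plan is to prove the proposition by a direct computation of $p_+\cE^{\gamma,f}$ via the relative de Rham complex, followed by identification of the surviving cohomology with $\cH(\alpha;\beta)$ using an explicit cyclic generator and the differential relation it satisfies.

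Since $p\colon\dG\to\Gm$ is smooth and affine of relative dimension $N-1$ and $\cE^{\gamma,f}$ is a rank-one integrable connection, the direct image $p_+\cE^{\gamma,f}$ is represented by the relative Koszul complex associated to the $N-1$ connection operators $\nabla_{x_i\partial_{x_i}}= x_i\partial_{x_i}(f)-1-\gamma_i$ acting on the cyclic generator $e^f$. The family $f_q(x)=x_1+\ldots+x_m+\tfrac1{x_{m+1}}+\ldots+\tfrac1{x_{N-1}}+q\,x_1\cdots x_{N-1}$ is cohomologically tame in the sense of Sabbah (its Newton polytope contains the origin in its interior), so the associated exponentially twisted Gauss--Manin system is concentrated in the middle degree, giving $\cH^ip_+\cE^{\gamma,f}=0$ for $i\neq 0$. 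Alternatively, the vanishing is contained in the GKZ-theoretic descriptions of such pushforwards by Reichelt~\cite{Reich2} and Schulze--Walther~\cite{SchulWalth2}.

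To identify $M := \cH^0p_+\cE^{\gamma,f}$ as a cyclic $\cD_{\Gm}$-module, set $\eta := [\omega\cdot e^f]\in M$ with $\omega=\bigwedge_i dx_i/x_i$ the invariant top form. The Koszul relations $[\omega\cdot g\cdot(x_i\partial_{x_i}(f)-1-\gamma_i)\cdot e^f]+[\omega\cdot x_i\partial_{x_i}(g)\cdot e^f]=0$ (for any polynomial $g$ in the $x_i$) together with $q\partial_q\cdot\eta=[\omega\cdot q\,x_1\cdots x_{N-1}\cdot e^f]$ yield, for $g=1$, the elementary identities
\begin{equation*}
[\omega\cdot x_i\cdot e^f] = (1+\beta_i-T)\eta\ (i\leq m),\qquad [\omega\cdot x_i^{-1}\cdot e^f] = (T-1-\alpha_{i-m+1})\eta\ (i>m),
\end{equation*}
where $T := q\partial_q$. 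Iterating these identities with successively chosen $g$'s (and using the commutation relations $\partial_q T^k=(T+1)^k\partial_q$ and $qT^k=(T-1)^kq$), together with the identity $x_1\cdots x_{N-1}\cdot e^f=\partial_q\cdot e^f$ to link the two families of relations, one arrives at a single differential equation on $\eta$ of the shape
\begin{equation*}
T\prod_{j=2}^n(T-1-\alpha_j)\cdot\eta = (-1)^m q\prod_{j=1}^m(T-1-\beta_j)\cdot\eta.
\end{equation*}
Pulling back by $\kappa\colon q\mapsto(-1)^m q$ (which fixes $T$ and multiplies $q$ by $(-1)^m$) gives $T\prod_{j=2}^n(T-1-\alpha_j)\cdot\eta = q\prod_{j=1}^m(T-1-\beta_j)\cdot\eta$ in $\kappa^+M$, which by Proposition~\ref{prop:PropHyp}(2) defines the same module as $\cH(\alpha;\beta)$ (the integer shifts $\alpha_j\mapsto\alpha_j+1$ for $j\geq 2$ and $\beta_j\mapsto\beta_j+1$ for all $j$ do not affect the isomorphism class).

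We thus obtain a surjection $\cH(\alpha;\beta)\twoheadrightarrow\kappa^+\cH^0p_+\cE^{\gamma,f}$; by irreducibility of $\cH(\alpha;\beta)$ (Proposition~\ref{prop:PropHyp}(1)) and equality of the generic ranks on the two sides (both equal $n$; on the right, this is the Bernstein--Kouchnirenko mixed volume of the Newton polytope of $f_q$), the surjection is an isomorphism. The main obstacle in this argument is the iterated Koszul computation in the third step: the choice of intermediate polynomials $g$ linking the two families of Koszul relations (indexed by $i\leq m$ and $i>m$ respectively) in a consistent manner, the careful tracking of the shifts $+1$ arising from the $\cO^\gamma$-structure, and the sign $(-1)^m$ from the reciprocal exponents in $f$ (which is exactly what $\kappa$ corrects). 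A cleaner alternative bypassing this computation is to derive the proposition by specialization from the GKZ-theoretic realizations of hypergeometric modules in Schulze--Walther~\cite{SchulWalth2} and Reichelt~\cite{Reich2}, which directly provide the required pushforward description.
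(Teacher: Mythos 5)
Your route (a direct fiberwise Koszul computation on the generator $[\omega\,e^f]$) is genuinely different from the paper's, and its elementary identities are correct, but as it stands it has two real gaps. First, the degree-vanishing argument: the parenthetical justification is false — for $m\geq 1$ the origin does not lie in the interior of the Newton polytope of $f_q$ (it is not even contained in it: the exponent vectors $e_1,\dots,e_m$ and $(1,\dots,1)$ all have positive first coordinate, while the remaining ones have first coordinate zero), so the appeal to cohomological tameness via that criterion collapses. What is actually needed is that $f_q$ is non-degenerate in the sense of \cite{Adolphson} for \emph{every} $q\in\Gm$, and this is precisely the nontrivial input where the hypothesis $n>m$ enters: the paper proves it by a toric argument (the columns of the matrix $A$ generate the fan of the total space of $\cO_{\dP^{n-1}}(-1)^m$, whose anticanonical divisor is ample), concluding that the GKZ module $\FL(h_+\cO^\gamma_{\dG_m^{N-1}})$ is smooth on the whole torus and that the embedding $k\colon q\mapsto(q,1,\dots,1)$ is non-characteristic, whence concentration of $p_+\cE^{\gamma,f}$ in degree $0$. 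The fallback ``the vanishing is contained in \cite{SchulWalth2}, \cite{Reich2}'' is too vague: those references provide the presentation of $h_+\cO^{\widetilde{\gamma}}$ (and only after an integer shift $\widetilde{\gamma}=\gamma+c$), not the concentration statement, which requires the non-characteristic restriction argument just described. Note also that your later claim that the generic rank equals $n$ rests on the same unverified non-degeneracy.

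Second, the identification step is incomplete in a way that matters. The relations $[x_i v]=(1+\beta_i-T)\eta$ and $[x_i^{-1}v]=(T-1-\alpha_{i-m+1})\eta$ are correct, the shifted operator you predict is plausible (it checks out for $m=0$, $n=2$), and invoking Proposition \ref{prop:PropHyp}(2) to absorb the integer shifts is legitimate since $\alpha_i-\beta_j\notin\dZ$. But (a) the ``iterated Koszul computation'' producing the single equation is only announced, not carried out, and (b) the asserted \emph{surjection} $\cH(\alpha;\beta)\twoheadrightarrow\kappa^+\cH^0p_+\cE^{\gamma,f}$ presupposes that $\eta=[\omega\,e^f]$ generates $\cH^0p_+\cE^{\gamma,f}$ over $\cD_{\Gm}$, which is nowhere proved. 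Without cyclicity, irreducibility of $\cH(\alpha;\beta)$ only gives an injection, and equality of generic ranks only forces the cokernel to be punctual; you would still have to rule out a skyscraper quotient of $\cH^0p_+\cE^{\gamma,f}$, which your argument does not address. The paper circumvents exactly this issue by computing the whole module at once: it identifies $p_+\cE^{\gamma,f}$ with the twisted Fourier--Laplace transform of $h_+\cO^{\widetilde{\gamma}}_{\dG_m^{N-1}}$ via base change, and the explicit cyclic presentation of Lemma \ref{lem:ApplySchulzeWalther} (where irreducibility is used to justify replacing $\gamma$ by $\gamma+c$) turns the top relative de Rham cohomology into a presentation by the hypergeometric operator, so no separate generation or rank argument is needed. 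If you prefer your ``cleaner alternative'' of quoting \cite{SchulWalth2} and \cite{Reich2}, you would in effect have to reproduce these steps — the torus-embedding lemma with the integer shift, the Fourier--Laplace/base-change identification, and the non-characteristic restriction — which is the paper's proof.
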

As a preliminary step towards the proof of this statement, we have the following result on a torus embedding.
\begin{lemma}\label{lem:ApplySchulzeWalther}
Let $\gamma\in \dR^{N-1}$ be arbitrary, and consider the torus embedding
$$
\begin{array}{rcl}
h:\dG^{N-1}_m & \longrightarrow & \dA^N \\
(x_1,\ldots,x_{N-1}) & \longmapsto &
(x_1\cdot\ldots\cdot x_{N-1},\frac{1}{x_{m+1}},\ldots,\frac{1}{x_{N-1}},x_1,\ldots,x_m)=:(y_1,\ldots,y_N).
\end{array}
$$
Then there
exists an integer vector $c\in\dZ^{N-1}$ such that, writing $\widetilde{\gamma}:=\gamma+c$, we have an isomorphism of left
$\cD_{\dA^N}$-modules:
$$
\begin{array}{rcl}
h_+ \cO^{\widetilde{\gamma}}_{\dG_m^{N-1}} &\cong&
\frac{\D \cD_{\dA^N}}{\D \left(\Box, (E_{m+i-1}+\widetilde{\gamma}_{m+i-1})_{i=2,\ldots,n},
(E_j+\widetilde{\gamma}_j)_{j=1,\ldots,m}\right)} \\ \\
&=&
\frac{\D \cD_{\dA^N}}{\D \left(\Box, (E_{m+i-1}+\alpha_i+c_{m+i-1})_{i=2,\ldots,n},
(E_j+\beta_j+c_j)_{j=1,\ldots,m}\right)},
\end{array}
$$
where
$$
\begin{array}{rclccl}
\Box & := & y_1\cdot\ldots\cdot y_n-y_{n+1}\cdot\ldots\cdot y_N \\
E_{m+i-1} & := & \partial_{y_1}y_1 - \partial_{y_i} y_i &\quad& i=2,\ldots,n\\
E_j & := & \partial_{y_1}y_1+\partial_{y_{n+j}}y_{n+j}&\quad& j=1,\ldots,m
\end{array}
$$
\end{lemma}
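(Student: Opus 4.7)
The map $h$ is a monomial torus embedding determined by the integer matrix $A\in\dZ^{(N-1)\times N}$ whose $k$-th column $a_k$ satisfies $h^*(y_k)=\ux^{a_k}$. Reading off the exponents from the definition of $h$, we have $a_1=(1,1,\ldots,1)^t$, $a_k=-e_{m+k-1}$ for $2\leq k\leq n$, and $a_{n+j}=e_j$ for $1\leq j\leq m$. A direct inspection shows that $A$ has rank $N-1$ and that its kernel $\ker(A\colon\dZ^N\to\dZ^{N-1})$ is the rank-one sublattice generated by $u=(1,\ldots,1,-1,\ldots,-1)^t$ (with $n$ ones followed by $m$ minus-ones); equivalently, the closure of the image of $h$ is the irreducible toric hypersurface $V=V(\Box)\subset\dA^N$, and $h$ is an isomorphism onto its open dense torus.

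The plan is now to invoke the presentation theorem of Schulze--Walther (\cite{SchulWalth2}), which for a monomial embedding $h_A$ of this type identifies $h_+\cO^{\widetilde{\gamma}}_{\dG_m^{N-1}}$ with the quotient of $\cD_{\dA^N}$ by the left ideal generated by the toric ideal $I_A\subset\dC[y_1,\ldots,y_N]$ (acting by multiplication) together with the Euler operators $Z_i+\widetilde{\gamma}_i$ for $i=1,\ldots,N-1$, where $Z_i=\sum_k A_{ik}\,y_k\partial_{y_k}$ is the homogeneity operator attached to the $i$-th row of $A$. The shift $\widetilde{\gamma}=\gamma+c$ by a suitable integer vector $c\in\dZ^{N-1}$ is unavoidable: it reflects both the parameter convention in $\cO^\gamma_{\dG_m^{N-1}}=\cD/(\partial_{x_i}x_i+\gamma_i)_i$ and the particular normal ordering used when writing the Euler operators.

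It remains to match this presentation with the one stated in the lemma. Since $\ker A=\dZ\cdot u$ is one-dimensional, the toric ideal $I_A$ is principal, generated by the single binomial $y^{u^+}-y^{u^-}=y_1\cdots y_n-y_{n+1}\cdots y_N=\Box$, in line with the direct observation that $h^*(\Box)=0$, so that $\Box$ automatically annihilates $h_+\cO^{\widetilde{\gamma}}$ by multiplication. Turning to the Euler operators, row $j$ of $A$ with $1\leq j\leq m$ has nonzero entries $A_{j,1}=A_{j,n+j}=1$, hence $Z_j=\partial_{y_1}y_1+\partial_{y_{n+j}}y_{n+j}-2=E_j-2$; row $m+i-1$ with $2\leq i\leq n$ has nonzero entries $A_{m+i-1,1}=1$ and $A_{m+i-1,i}=-1$, hence $Z_{m+i-1}=\partial_{y_1}y_1-\partial_{y_i}y_i=E_{m+i-1}$. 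Absorbing the integer normal-ordering shifts into $c$ then yields exactly the presentation claimed in the lemma, with $\widetilde{\gamma}_{m+i-1}=\alpha_i+c_{m+i-1}$ and $\widetilde{\gamma}_j=\beta_j+c_j$.

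The main obstacle lies in pinning down the precise conventions in \cite{SchulWalth2} (sign of the parameter, $y$- versus $\partial$-form of the toric ideal, and choice of normal ordering for the Eulerians), since these conventions dictate the explicit value of $c$; fortunately, only the fact that $c\in\dZ^{N-1}$ is needed for the remainder of the paper, so a careful but routine bookkeeping suffices.
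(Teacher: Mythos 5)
There is a genuine gap, and it concerns precisely the point the lemma is about: the role of the integer shift $c$. You treat $c$ as bookkeeping for conventions (normal ordering $Z_j=E_j-2$, sign of the parameter, etc.), and you invoke a ``presentation theorem'' of Schulze--Walther as if it identified $h_+\cO^{\widetilde{\gamma}}_{\dG_m^{N-1}}$ with the quotient of $\cD_{\dA^N}$ by the toric ideal and the Euler operators for \emph{every} parameter. That is not true, and it is exactly why the lemma asserts only the \emph{existence} of some $c$ depending on $\gamma$. The subtlety is that $h$ factors as the closed monomial embedding $h_1\colon\dG_m^{N-1}\hookrightarrow\dG_m^N$ followed by the open embedding $h_2\colon\dG_m^N\hookrightarrow\dA^N$; the direct image under $h_2$ is the localization along $y_1\cdots y_N$, so $h_+\cO^{\widetilde{\gamma}}$ agrees with the GKZ-style quotient on $\dA^N$ if and only if left multiplication by each $y_k$ is invertible on that quotient. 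For ``bad'' (strongly resonant) integer translates of the parameter this fails: on the torus, $\cO^{\gamma}$ and $\cO^{\gamma+c}$ are isomorphic for any $c\in\dZ^{N-1}$, but the corresponding quotients of $\cD_{\dA^N}$ are in general \emph{not} isomorphic to each other, and not all of them compute the direct image (already for $N-1=0$-type toy cases such as $\cD_{\dA^1}/(\partial_x x+\gamma)$ versus its localization one sees the dependence on the integer part of $\gamma$). So the value of $c$ cannot be dictated by conventions alone; it must be chosen as a function of $\gamma$.

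The paper's argument supplies exactly this missing step: after computing $h_{1,+}\cO^{\widetilde{\gamma}}$ by transfer modules (your identification of the matrix $A$, of the principal toric ideal $(\Box)$, and of the Euler operators is fine and matches this part), it invokes \cite[Theorem 3.6]{SchulWalth2} together with \cite[Lemma 1.10]{Reich2} to get a vector $\delta_A\in\dN A$ such that for all parameters in $\delta_A+\dR_{\geq 0}A$ multiplication by every $y_k$ is invertible on the quotient module, and then uses the elementary fact that the full-dimensional cone $\dR_{\geq 0}A$ contains $\gamma+c$ for a suitable integer vector $c$ (by normality of $\dN A$ and \cite[Lemma 1.11]{Reich2} one could even take $\delta_A=0$). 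Only with this choice of $c$ does the quotient coincide with its localization and hence with $h_+\cO^{\widetilde{\gamma}}$. Your proposal would be repaired by replacing the paragraph ``absorbing the integer normal-ordering shifts into $c$'' with this invertibility argument and the cone argument for the existence of $c$.
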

\begin{proof}
Since $h$ is an affine map and an embedding, the functor $h_+$ is exact. Moreover, notice that $h$ factors as $h=h_2\circ h_1$, where
$h_1:\dG_m^{N-1}\hookrightarrow \dG_m^N$ is the closed (monomial) embedding defined exactly as $h$, i.e.\
sending $(x_1,\ldots,x_{N-1})$ to
$(x_1\cdot\ldots\cdot x_{N-1},1/x_{m+1},\ldots,1/x_{N-1},x_1,\ldots,x_m)$,
and where $h_2:\dG_m^N\hookrightarrow \dA^N$ is the canonical open embedding. Then it is an easy exercise using the definition of the direct image (i.e.\ the explicit expression via transfer modules) to show that
$$
h_{1,+} \cO^{\widetilde{\gamma}}_{\dG_m^{N-1}} \cong
\frac{\D \cD_{\dG_m^N}}{\D \left(\Box, (E_{m+i-1}+\alpha_i+c_{m+i-1})_{i=2,\ldots,n},
(E_j+\beta_j+c_j)_{j=1,\ldots,m}\right)}.
$$
Notice that this isomorphism holds for any integer vector $c\in \dZ^{N-1}$ and moreover, the modules $h_{1,+} \cO^{\widetilde{\gamma}}_{\dG_m^{N-1}}$ are all isomorphic (i.e., independent of the choice of $c$) since we have
$\cO^{\gamma}_{\dG_m^{N-1}}\cong \cO^{\widetilde{\gamma}}_{\dG_m^{N-1}}$ for any $c\in \dZ^{N-1}$.

It thus remains to show that
\begin{equation}\label{eq:DirImOpEmbed}
\begin{array}{c}
h_{2,+}
\left(
\frac{\D \cD_{\dG_m^N}}{\D \left(\Box, (E_{m+i-1}+\alpha_i+c_{m+i-1})_{i=2,\ldots,n},
(E_j+\beta_j+c_j)_{j=1,\ldots,m}\right)}
\right)
\\ \\ \cong
\frac{\D \cD_{\dA^N}}{\D \left(\Box, (E_{m+i-1}+\alpha_i+c_{m+i-1})_{i=2,\ldots,n},
(E_j+\beta_j+c_j)_{j=1,\ldots,m}\right)}.
\end{array}
\end{equation}
Since $h_2$ is an open embedding (over the complement
$\dA^N\backslash \dG_m^N=\left\{y_1\cdot\ldots\cdot y_N=0\right\}$), in order to prove equation \eqref{eq:DirImOpEmbed}, it suffices to show that for all $k\in\{1,\ldots, N\}$, left multiplication with $y_k$ is invertible on the module
$$
\frac{\D \cD_{\dA^N}}{\D \left(\Box, (E_{m+i-1}+\alpha_i+c_{m+i-1})_{i=2,\ldots,n},
(E_j+\beta_j+c_j)_{j=1,\ldots,m}\right)}.
$$
For this we use \cite[Theorem 3.6.]{SchulWalth2}, together with \cite[Lemma 1.10]{Reich2}. Consider the matrix
\begin{equation}\label{eq:MatrixA}
A=\left(\begin{array}{c|c|c}
\underline{1}_m & \underline{0}_{m\times(n-1)} & \operatorname{Id}_m\\[3pt]
\hline
 & & \vspace{-10pt}\\
\underline{1}_{n-1} & -\operatorname{Id}_{n-1} & \underline{0}_{(n-1)\times m}\end{array}
\right)
\end{equation}
and notice that its columns are the exponents of the monomial components of the map $h_1$. We will write below $\dN A$ resp. $\dR_{\geq 0}A$ for the monoid resp. the cone generated in $\dZ^{N-1}$ resp. in $\dR^{N-1}$ by the columns of the matrix $A$.

It follows then from
\cite[Theorem 3.6.]{SchulWalth2} and \cite[Lemma 1.10]{Reich2} that there is a vector $\delta_A\in \dN A$ such that for all $\gamma'\in \delta_A+\dR_{\geq 0}A$, multiplication with
$y_k$ for $k\in\{1,\ldots,N\}$ is invertible on the module $
\cD_{\dA^N}/(\Box,(E_{m+i-1}+\gamma'_{m+i-2}),(E_j+\gamma'_j))$. Now since $\dR_{\geq 0} A$ is a cone in $\dR^{N-1}$,
it is clear by an elementary topological argument that we can find a $c\in \dZ^{N-1}$ such that
$\widetilde{\gamma}=\gamma+c \in \delta_A+\dR_{\geq A}$. Notice that it follows from the normality of the semigroup $\dN A$, using \cite[Lemma 1.11]{Reich2},
that we can actually take $\delta_A$ to be zero, but
we will not use this fact here.

We thus obtain that for this choice of $c\in \dZ^{N-1}$, multiplication by $y_k$ for $k\in\{1,\ldots,N\}$ is invertible on
$ \cD_{\dA^N}/\left(\Box, (E_{m+i-1}+\widetilde{\gamma}_{m+i-1})_{i=2,\ldots,n},
(E_j+\widetilde{\gamma}_j)_{j=1,\ldots,m}\right)$,
which implies that
$$
h_+ \cO_{\dG_m^{N-1}}^{\widetilde{\gamma}}
\cong
\frac{\D \cD_{\dA^N}}{\D \left(\Box, (E_{m+i-1}+\widetilde{\gamma}_{m+i-1})_{i=2,\ldots,n},
(E_j+\widetilde{\gamma}_j)_{j=1,\ldots,m}\right)}.
$$

\end{proof}
We need another preparation concerning an important property of the Laurent polynomial $f$ introduced in the statement of Proposition \ref{prop:HypGMSystem}. In order to formulate it, consider more generally the function $F:=\sum_{i=1}^N
\lambda_i\cdot\underline{x}^{\underline{a}_i}$, where
$\underline{a}_1,\ldots,\underline{a}_N$ are the columns of the matrix $A$ from equation \eqref{eq:MatrixA}. Notice that then $f\in \cO_\dG$ is the specialization of $F$ by setting
$\lambda_1=\ldots=\lambda_{N-1}=1$ and $\lambda_N=q$.
\begin{lemma}\label{lem:NonDeg}
For any $\underline{\lambda}\in \dG_m^N$, the function $F(-,\underline{\lambda}) \in \cO_{\dG_m^{N-1}}$ is non-degenerate in the sense of \cite[p.\ 274]{Adolphson}. In particular, $f$ is non-degenerate for any $q\in \Gm$.
\end{lemma}
\begin{proof}
Let us recall the notion of non-degenerateness: Let $\Delta:=\Conv\left(\underline{0},\underline{a}_1,\ldots,\underline{a}_N\right)$ be the convex hull of the vectors $\underline{a}_1,\ldots,\underline{a}_N$ as well as the origin. Then if $\tau\subset \Delta$ is any proper face of $\Delta$ that does not contain $\underline{0}$, we have to show that the Laurent polynomial
$$
F_\tau:=\sum_{\underline{a}_i\in\tau} \lambda_i \cdot \underline{x}^{\underline{a_i}}
$$
has no critical points on $\dG_m^{N-1}$.
Notice that in \cite[Definition 3.8]{ReiSe2}, this property is referred to as having no \emph{bad singularities at infinity}.

In order to prove that this property holds for the functions $F(-,\underline{\lambda})$, we employ an argument from toric geometry: First notice that the vectors $\underline{a}_1,\ldots,\underline{a}_N$ are the primitive integral generators of the rays of the fan of the (non-compact) $(N-1)$-dimensional toric variety $Y_\Sigma:=\dV(\cO_{\dP^{n-1}}(-1)^m)$ (the total space of the
vector bundle which is the direct sum of $m$ copies of the line bundle $\cO_{\dP^{n-1}}(-1)$ on $\dP^{n-1}$).
First notice that since the duals of the bundles $\cO_{\dP^{n-1}}(-1)$ are ample (nef would be sufficient), it follows that $\Supp(\Sigma)$ is convex. This implies
(see \cite[Lemma 5.3]{ReiSe2}) that $\Supp(\Sigma)=\dR_{\geq 0} A$.
In particular, consider the cone $C(\tau)$ over $\tau$, that is
$$
C(\tau):=\dR_{\geq 0} \tau:=\sum_{\underline{a}_i\in \tau} \dR_{\geq 0} \underline{a}_i,
$$
then we have $C(\tau)\subset \Supp(\Sigma)$ (here we use that $\underline{0}$ is not contained in $\tau$).
It follows that $C(\tau)$ is a union of cones of the fan $\Sigma$. We claim that it is actually equal to a single cone of $\Sigma$. Namely, since we have $n>m$, the variety $Y$ is Fano, i.e.\ its anti-canonical class is ample (see e.g.\ \cite[Section 4.2]{ReiSe2} for a summary of the statements needed here). In terms of the toric data of $Y$ (i.e.\ the fan $\Sigma$), this property means that the piecewiese linear function $\psi^{\Sigma}_{-K_Y}$ on $\textup{Supp}(\Sigma)\subset \dR^{N-1}$ defined by $-K_Y$ (which is the sum of the toric invariant divisors on $Y$) is strictly convex.
This implies that no cone $\sigma\in \Sigma$ can be  strictly contained in $C(\tau)$, hence $C(\tau)$ must itself be a cone in $\Sigma$.

Now the argument proceeds as in \cite[Lemma 2.8]{ReiSe}: Suppose that $\tau$ is a face of $\Delta$ (not containing $\underline{0}$) of dimension $s-1$, so that $C(\tau)$ is an $s$-dimensional cone. Write $\underline{a}_{\tau_1},\ldots,\underline{a}_{\tau_s}$ for those vectors $\underline{a}_i$ that appear in $\tau$.
Then since $\Sigma$ is a smooth fan, we know that
$\underline{a}_{\tau_1},\ldots,\underline{a}_{\tau_s}$ are linearly independent. Now if $(x_1,\ldots,x_{N-1})\in \dG_m^{N-1}$ was a critical point of $F_\tau(\underline{x},\underline{\lambda})$,
then the equation
$$
A_\tau\cdot
\begin{pmatrix}
\lambda_{\tau_1}\cdot \underline{x}^{\underline{a}_{\tau_1}} \\
\vdots \\
\lambda_{\tau_s}\cdot \underline{x}^{\underline{a}_{\tau_s}} \\
\end{pmatrix}
=0
$$
(where $A_\tau=\left(\underline{a}_{\tau_1}\,|\ldots|\,
\underline{a}_{\tau_s}\right)$)
would have a non-trivial solution, but this is impossible
since the matrix $A_\tau$ has maximal rank but the entries  $\lambda_{\tau_j}\underline{x}^{\underline{a}_{\tau_j}}$ lie in $\dG_m$, i.e.\ they are non-zero.
\end{proof}
With these preparations, we can give a proof of the proposition by applying some twisted version of a Fourier--Laplace transformation.
\begin{proof}[Proof of Proposition \ref{prop:HypGMSystem}]

We first show the isomorphism
$$
\kappa^+\cH^0 p_+ \cE^{\gamma,f} \cong\cH(\alpha;\beta).
$$
Notice that given $\gamma=(\beta_1,\ldots,\beta_m,\alpha_2,\ldots,\alpha_n)\in [0,1)^{N-1}$, we can find an integer vector $c\in \dZ^{N-1}$ as in the previous lemma \ref{lem:ApplySchulzeWalther}, and consider
$$
\widetilde{\gamma}=\gamma+c=:
(\widetilde{\beta}_1,\ldots,\widetilde{\beta}_m,\widetilde{\alpha}_2,\ldots,\widetilde{\alpha}_n).
$$
Then since $\cH(\alpha;\beta)$ is irreducible, we have an isomorphism of $\cD_{\Gm}$-modules
$$
\cH(\alpha;\beta) \cong \cH(\widetilde{\alpha};\widetilde{\beta})
$$
by Proposition \ref{prop:PropHyp}. Hence, in order to prove the statement, we can replace $\cH(\alpha;\beta)$ by $\cH(\widetilde{\alpha};\widetilde{\beta})$ or, in other words, assume from the very beginning that
the statement of the previous lemma \ref{lem:ApplySchulzeWalther} holds
for $\gamma$ itself.

Consider the following diagram
$$
\begin{tikzcd}
\dG=\dG^{N-1}_m\times\Gm \ar{rr}{h\times \id_{\Gm}} \ar[swap]{dd}{p_1} \ar[bend left=40]{ddrrrr}{p} && \dA^N\times \Gm \ar[swap]{dd}{\pi_1} \ar{rrdd}{\pi_2} \\ \\
\dG_m^{N-1} \ar[hook]{rr}{h} && \dA^N&& \Gm
\end{tikzcd}
$$
where the square is Cartesian. Consider also the exponential module $\cE_{\dA^N\times \Gm}^\psi \in \mathrm{Mod}_\mathrm{hol}(\cD_{\dA^N\times \Gm})$, where $\psi=q\cdot y_1+y_2+\ldots+y_N\in\cO_{\dA^N\times \Gm}$.
Then we have
$$
\begin{array}{rclcl}
\FL^\psi(h_+\cO^\gamma_{\dG_m^{N-1}}) & :=&
\cH^0 \pi_{2,+}\left([\pi_1^+ h_+\cO^\gamma_{\dG_m^{N-1}}]
\otimes_{\cO_{\dA^N\times \Gm}} \cE_{\dA^N\times \Gm}^\psi\right)
&\quad\quad&\textup{Definition of  }\FL
\\ \\
&\cong &
\cH^0 \pi_{2,+}\left(
[(h\times\id_{\Gm})_+p_1^+\cO^\gamma_{\dG_m^{N-1}}]
\otimes_{\cO_{\dA^N\times \Gm}} \cE_{\dA^N\times \Gm}^\psi\right)
&&\textup{base change}
\\ \\
&\cong &
\cH^0 \pi_{2,+}\left(
[(h\times\id_{\Gm})_+\cO^\gamma_{\dG}]
\otimes_{\cO_{\dA^N\times \Gm}} \cE_{\dA^N\times \Gm}^\psi\right)
&&p_1^+\cO^\gamma_{\dG_m^{N-1}}\cong\cO^\gamma_\dG
\\ \\
&\cong &\cH^0 \pi_{2,+}
(h\times\id_{\Gm})_+(\cO^\gamma_{\dG}
\otimes_{\cO_{\dG}} \cE_{\dG}^f)
&&\textup{projection formula and }
\\ &&&&(h\times\id_{\Gm})^+\cE_{\dA^N\times \Gm}^\psi=\cE^f_\dG \\ \\
&\cong&  \cH^0 p_+ \cE^{\gamma,f}
&&\cO^\gamma_{\dG}
\otimes_{\cO_{\dG}} \cE_{\dG}^f=\cE^{\gamma,f} \\
&&&& \textup{and }p=\pi_2\circ (h\times\id_{\Gm}).
\end{array}
$$

It thus remains to prove that
$$
\kappa^+\FL^\psi(h_+\cO^\gamma_{\dG_m^{N-1}})  \cong \cH(\alpha;\beta),
$$
using the explicit expression for $h_+\cO^\gamma_{\dG_m^{N-1}}$ from Lemma \ref{lem:ApplySchulzeWalther} (as well as the remark made at the beginning of the current proof). We have
$$
\begin{array}{rcl}
\pi_1^+h_+\cO^\gamma_{\dG_m^{N-1}}
&\cong&
\frac{\D \cD_{\dA^N}}{\D \left(\Box, (E_{m+i-1}+\alpha_i)_{i=2,\ldots,n},
(E_j+\beta_j)_{j=1,\ldots,m}\right)}\boxtimes
\cO_{\Gm}\\ \\
&\cong&
\frac{\D \cD_{\dA^N\times\Gm}}{\D \left(\Box, (E_{m+i-1}+\alpha_i)_{i=2,\ldots,n},
(E_j+\beta_j)_{j=1,\ldots,m},\partial_q\right)}.
\end{array}
$$
The exponential module $ \cE_{\dA^N\times \Gm}^\psi\cong
\cD_{\dA^N\times\Gm}/ \left(\partial_q-y_1,\partial_{y_1}-q,(\partial_{y_k}-1)_{k=2,\ldots,N}\right)$ is $\cO_{\dA^N\times\Gm}$-locally free of rank $1$, generated by the formal symbol $e^\psi$. Hence, the tensor product
$[\pi_1^+ h_+\cO^\gamma_{\dG_m^{N-1}}]
\otimes_{\cO_{\dA^N\times \Gm}} \cE_{\dA^N\times \Gm}^\psi$ equals $\pi_1^+ h_+\cO^\gamma_{\dG_m^{N-1}}$ as
$\cO_{\dA^N\times\Gm}$-module
(and we denote it by $\pi_1^+ h_+\cO^\gamma_{\dG_m^{N-1}}\cdot e^\psi$), and its $\cD_{\dA^N\times\Gm}$-structure is given by the product rule. We thus have for any $n\in \pi_1^+ h_+\cO^\gamma_{\dG_m^{N-1}}$ and for any $k\in\{2,\ldots, N\}$ that
$$
\begin{array}{rcl}
(\partial_{y_1} y_1  \cdot n) \otimes e^\psi &=& \partial_{y_1} \left(y_1  \cdot n \otimes e^\psi\right)-  y_1\cdot n\otimes q \cdot e^\psi
 = \left(\partial_{y_1} -q \right) y_1 \cdot (n\otimes e^\psi)
 \\ \\
\left(\partial_{y_k} y_k \cdot n\right)  \otimes e^\psi &=& \partial_{y_k} \left(y_k  \cdot n \otimes e^\psi\right)-  y_k \cdot n\otimes e^\psi
=\left(\partial_{y_k}-1\right)y_k\cdot\left(n\otimes e^\psi\right)\\ \\
\left(\partial_q \cdot n\right)\otimes e^\psi & = &
\partial_q\left(n\otimes e^\psi\right)-n\otimes y_1\cdot e^\psi =
\left(\partial_q-y_1\right)\cdot\left(n\otimes e^\psi\right),
\end{array}
$$
from which it follows that
\begin{equation}\label{eq:ExprExpTw}
\begin{array}{l}
[\pi_1^+ h_+\cO^\gamma_{\dG_m^{N-1}}]
\otimes_{\cO_{\dA^N\times \Gm}} \cE_{\dA^N\times \Gm}^\psi\cong
\\ \\ \frac{\D \cD_{\dA^N\times\Gm}}{\D \left(\Box, (E_{m+i-1}+\alpha_i)_{i=2,\ldots,n}, (E_j+\beta_j)_{j=1,\ldots,m},\partial_q\right)}\otimes_{\cO_{\dA^N\times \Gm}} \cE_{\dA^N\times \Gm}^\psi  \cong
\\ \\
\frac{\D \cD_{\dA^N\times\Gm}}{\D \left(\Box, ((\partial_{y_1}-q)y_1-(\partial_{y_i}-1)y_i+\alpha_i)_{i=2,\ldots,n},
((\partial_{y_1}-q)y_1+(\partial_{y_{n+j}}-1)y_{n+j}+\beta_j)_{j=1,\ldots,m},\partial_q-y_1\right)} \cong
\\ \\
\frac{\D \cD_{\dA^N\times\Gm}}{\D \left(\Box, (\partial_{y_1}y_1-q\partial_q-\partial_{y_i}y_i+y_i+\alpha_i)_{i=2,\ldots,n},
(\partial_{y_1}y_1-q\partial_q+\partial_{y_{n+j}}y_{n+j}-y_{n+j}+\beta_j)_{j=1,\ldots,m},\partial_q-y_1\right)}.
\end{array}
\end{equation}
Consider the subalgebra
$$
\widetilde{\cD}
:=\cD_{\Gm}\langle \partial_{y_1},\partial_{y_1}y_1,\partial_{y_2},\partial_{y_2}y_2,\ldots,\partial_{y_N},\partial_{y_N}y_N\rangle
$$
of $\cD_{\dA^N\times \Gm}$. Then there is an isomorphism of $\widetilde{\cD}$-modules $$
[\pi_1^+ h_+\cO^\gamma_{\dG_m^{N-1}}]
\otimes_{\cO_{\dA^N\times \Gm}} \cE_{\dA^N\times \Gm}^\psi\cong
\widetilde{\cD}/(\widetilde{\Box})
$$
where
$$
\widetilde{\Box} =
\partial_q\cdot \prod_{i=2}^n\left(-\partial_{y_1}y_1+q\partial_q+\partial_{y_i}y_i-\alpha_i\right)
-\prod_{j=1}^m(\partial_{y_1}y_1-q\partial_q+\partial_{y_{n+j}}y_{n+j}+\beta_j)
$$
by expressing $y_1$ by $\partial_q$, $y_i$ as $-\partial_{y_1}y_1+q\partial_q+\partial_{y_i}y_i-\alpha_i$ for $i=2,\ldots, n$ and $y_{n+j}$ by $\partial_{y_1}y_1-q\partial_q+\partial_{y_{n+j}}y_{n+j}+\beta_j$ for $j=1,\ldots, m$ which is possible due to the denominator of the right hand side of the isomorphism \eqref{eq:ExprExpTw}.
Now since the map $\pi_2$ is affine, the top cohomology $\cH^0 \pi_{2,+}\left([\pi_1^+ h_+\cO^\gamma_{\dG_m^{N-1}}]
\otimes_{\cO_{\dA^N\times \Gm}} \cE_{\dA^N\times \Gm}^\psi\right)$ is nothing but the $N$-th cohomology of the complex
$\pi_{2,*}\DR_{\dA^N\times\Gm/\Gm}^\bullet\left([\pi_1^+ h_+\cO^\gamma_{\dG_m^{N-1}}]
\otimes_{\cO_{\dA^N\times \Gm}} \cE_{\dA^N\times \Gm}^\psi\right)$.
The latter is the cokernel of the morphism given by left multiplication
by $\partial_{y_k}$ for $k=1,\ldots N$. All monomials $\partial_{y_k}$ and $\partial_{y_k} y_k$ are zero in this cokernel, in particular,
the (class of the) operator $-\partial_{y_1}y_1+q\partial_q+\partial_{y_i}y_i-\alpha_i$ resp. $\partial_{y_1}y_1-q\partial_q+\partial_{y_{n+j}}y_{n+j}+\beta_j$ equals (the class of)
$q\partial_q-\alpha_i$ resp. $-q\partial_q+\beta_j$.

Hence, we obtain
$$
\begin{array}{rcl}
\cH^0 \pi_{2,+}\left([\pi_1^+ h_+\cO^\gamma_{\dG_m^{N-1}}]
\otimes_{\cO_{\dA^N\times \Gm}} \cE_{\dA^N\times \Gm}^\psi\right)
&\cong &\frac{\D \cD_{\Gm}}{\D
\left(\partial_q\prod_{i=2}^n(q\partial_q-\alpha_i)-(-1)^m\prod_{j=1}^m(q\partial_q-\beta_j)\right)} \\ \\
\cong\frac{\D \cD_{\Gm}}{\D
\left((q\partial_q)\prod_{i=2}^n(q\partial_q-\alpha_i)-(-1)^m q \prod_{j=1}^m(q\partial_q-\beta_j)\right)}
&\cong&  \kappa^+\cH(\alpha;\beta)
\end{array}
$$
Since $\kappa$ is an involution, the statement we are after follows.

It remains to prove the vanishing of $\cH^i p_+ \cE^{\gamma,f} $ for $i\neq 0$. First note that the complex $p_+ \cE^{\gamma,f}\in D^b_h(\cD_{\Gm})$ can alternatively be calculated as follows:
\begin{equation}\label{eq:ForFLExp}
p_+\cE^{\gamma,f}\cong \iota^+ \FL_{\Gm}(\phi_+\cO^\gamma_\dG).
\end{equation}
where $\iota: \Gm\hookrightarrow \widehat{\dA}^1 \times \Gm$,
$q\mapsto (1,q)$ is the embedding, where $\FL_{\Gm}:\textup{Mod}(\cD_{\dA^1\times \Gm})\rightarrow
\textup{Mod}(\cD_{\widehat{\dA}^1\times \Gm})$ is the partial Fourier transformation with respect to the first factor, and where we write $\phi=(f,p):\dG\rightarrow \dA^1\times\Gm$.
This is well-known, but let us reprove it here for the convenience of the reader: We have the following diagram, the leftmost part of which is cartesian,
$$
\begin{tikzcd}
& \dG\times\dA^1\times\widehat{\dA}^1 \ar{ld}{\pi_{\dG}} \ar{rd}{\widetilde{\phi}
=(f,\pi_{\widehat{\dA}^1},\pi_{\Gm})} \ar[bend left=40]{rrdd}{\widetilde{\pi}_{\widehat{\dA}^1\times \Gm}}\\
\dG=\dG_m^{N-1}\times\Gm \ar{rd}{\phi}&&\dA^1\times\widehat{\dA}^1\times\Gm \ar{ld}{\pi_{\dA^1\times\Gm}} \ar[swap]{rd}{\pi_{\widehat{\dA}^1\times\Gm}}\\
&\dA^1\times\Gm&& \widehat{\dA}^1\times\Gm & \Gm \ar[hook',swap]{l}{\iota}
\end{tikzcd}
$$
where $\pi_{\dG}, \pi_{\Gm},\pi_{\dA^1}, \pi_{\dA^1\times \Gm}, \pi_{\widehat{\dA}^1}, \pi_{\widehat{\dA}^1\times \Gm}$ and $\widetilde{\pi}_{\widehat{\dA}^1\times \Gm}$
denote the obvious projections. We now have (denoting the coordinates on $\dA^1$ resp.
$\widehat{\dA}^1$ by $t$ resp. by $\tau$):
$$
\begin{array}{rclcl}
\iota^+ \FL_{\Gm}(\phi_+\cO^\gamma_\dG) & = &
\iota^+ \pi_{\widehat{\dA}^1\times\Gm,+}\left((\pi_{\dA^1\times\Gm}^+\phi_+\cO^\gamma_\dG)\otimes \cE^{\tau t}\right)
&\quad\quad&\textup{definition of }\FL\\ \\
&\cong&
\iota^+ \pi_{\widehat{\dA}^1\times\Gm,+}\left((\widetilde{\phi}_+\pi_{\dG}^+\cO^\gamma_\dG)\otimes \cE^{\tau t}\right)
&\quad\quad&\textup{base change}\\ \\
&\cong&
\iota^+ \pi_{\widehat{\dA}^1\times\Gm,+}\widetilde{\phi}_+\left(\pi_{\dG}^+\cO^\gamma_\dG\otimes \cE^{\tau f}\right)
&\quad\quad&\textup{projection formula}\\ \\
&\cong&
\iota^+ \widetilde{\pi}_{\widehat{\dA}^1\times\Gm,+}\left(\pi_{\dG}^+\cO^\gamma_\dG\otimes \cE^{\tau f}\right) &\quad\quad& \widetilde{\pi}_{\widehat{\dA}^1\times\Gm}=
\pi_{\widehat{\dA}^1\times\Gm}\circ\widetilde{\phi}\\ \\
&\cong&
p_+\left(\cO_\dG^\gamma\otimes\cE^f\right)\cong p_+  \cE^{\gamma,f}
&\quad\quad&\textup{base change},
\end{array}
$$
and this shows the statement of formula \eqref{eq:ForFLExp}.
Now consider the following diagram
$$
\begin{tikzcd}
& \widetilde{\dG} \ar{dd}{\pi}   \ar[bend left=40]{dddd}{\widetilde{\Phi}}\\ \\
\dG \ar{r}{j} \ar[swap]{rdd}{\phi} \ar{ruu}{\widetilde{j}} & \widehat{\dG} \ar{dd}{\Phi}\\ \\
& \dA^1\times\Gm
\end{tikzcd}
$$
where $\Phi:\widehat{\dG}\rightarrow \dA^1\times \Gm$ is a partial
compactification of $\phi$, i.e.\ $\widehat{\dG}$ is
the toric compactification of the graph of $\phi$, and $\pi:\widetilde{\dG}\rightarrow \widehat{\dG}$ is a resolution of singularities such that $\widetilde{\dG}\backslash \dG$ is a simple normal crossing divisor. In particular, $\Phi$, $\pi$ and $\widetilde{\Phi}$ are projective morphisms, and the non-degeneracy property of Lemma \ref{lem:NonDeg} translates into the fact that $\Phi$ is stratified smooth on $\widehat{\dG} \backslash \dG$, that is, there is a (toric) stratification of $\widehat{\dG} \backslash \dG$ such that
the restriction of $\Phi$ to each of its strata has no critical points. Moreover, since $\phi$ has isolated critical points, the hypotheses of \cite[Corollary 7.6]{SchapSchnei} are satisfied for (the analytification of) $\widetilde{\Phi}$ and for $(\widetilde{j}_+\cO_\dG^\gamma)^{\mathrm{an}}$. It follows then from loc.cit. that
$\cH^i \widetilde{\Phi}^{\mathrm{an}}_+ (\widetilde{j}_+\cO_\dG^\gamma)^{\mathrm{an}}$ is smooth for $i<0$, but since $\widetilde{\Phi}$ is proper we conclude that the same holds
for $\cH^i\phi_+\cO_\dG^\gamma$.

In particular, for $i<0$, $\cH^i \phi_+\cO^\gamma_\dG$ can be seen as a free $\cO_{\dA^1}$-module (of infinite rank), but then
its partial Fourier transform
$$
\FL( \cH^i(\phi_+\cO^\gamma_\dG))=
\cH^i \FL(\phi_+\cO^\gamma_\dG)
$$
(notice that $\FL$ is an exact functor) is supported at
$\{\tau=0\}\times \Gm\subset \widehat{\dA}^1\times \Gm$.
Consequently, we have
$$
\cH^j\iota^+\cH^i\,\FL(\phi_+\cO_\dG^\gamma) =0
$$
for all $j$ and for all $i<0$.

The statement we are after is thus proved once we know that the embedding $\iota$ is non-characteristic for the module $\cH^0\FL(\phi_+\cO_\dG^\gamma)=
\FL(\cH^0\phi_+\cO_\dG^\gamma)$. This follows from \cite[Theorem 1.11 (2)]{DS} (applying it to $M=\cH^0\phi_+\cO_\dG^\gamma$), notice that the condition (NC) in loc.cit. holds by a general argument from \cite[Lemma 3.13]{ReiSe2}, but can also be seen directly, namely, if $\Delta(\phi)$ is the discriminant of the map $\phi$ (containing the singular locus of $M$), then the restriction of the projection
$\dA^1\times\Gm \twoheadrightarrow \Gm$
to $\Delta(\phi)$ satisfies the condition (NC).

\end{proof}

We will now use the presentation of the hypergeometric system given in Proposition~\ref{prop:HypGMSystem} in order to study $K$-structures on the enhanced solutions of $\cH(\alpha;\beta)$ for subfields $K\subset \dC$.

Let $L$ be a subfield of $\dC$ such that $L \supset \dQ(e^{2\pi i \gamma_1},\ldots, e^{2\pi i\gamma_{N-1}})$. Consider a field $K\subset L$ such that $L/ K$ is a finite Galois extension. We write $G:=\Gal(L/K)$, so that we are in the situation considered in Section \ref{sec:BettiConj}. Let $(\Gm)_\infty$ and $\dG_\infty$ be the bordered spaces associated to $\Gm$ and $\dG$, respectively, by a smooth completion as in
Section \ref{sectionRH}. Our main objective is to find criteria such that the object $\SolE{(\Gm)_\infty}(\cH(\alpha;\beta))$ has a $K$-structure, i.e.\ is defined over $K$ in the sense of Proposition \ref{prop:KStructure}.

Using the properties of the algebraic enhanced solution functor and Proposition \ref{prop:HypGMSystem}, we can write
\begin{align*}
\SolE{(\Gm)_\infty}\big(\cH(\alpha;\beta)\big)&\cong \SolE{(\Gm)_\infty}\big(\kappa^+ p_+ \cE^{\gamma,f}\big)\\
&\cong \EE\kappa_\infty^{-1} \EE p_{\infty !!} \big(\pi^{-1}\Sol{\dG}(\cO^\gamma)\otimes \SolE{\dG_\infty}(\cE^f)[N-1])\\
&\cong \EE\kappa_\infty^{-1} \EE p_{\infty !!} \big(\pi^{-1}\Sol{\dG}(\cO^\gamma)\otimes \dE_\dC^{\real f}[N-1]\big)\in \EbI{\dC}{(\Gm)_{\infty}}.
\end{align*}

Note that $\Sol{\dG}(\cO^\gamma)\in\Mod{\dC}{\dG}$ is a local system with semi-simple monodromy with eigenvalues contained in $L$, so one can find a local system $\cF^\gamma\in \Mod{L}{\dG}$ such that $\Sol{\dG}(\cO^\gamma)\cong \dC_\dG \otimes_{L_\dG} \cF^\gamma$. On the other hand, we clearly have $\dE^{\real f}_\dC\cong \pi^{-1}\dC_\dG\otimes_{\pi^{-1}L_\dG} \dE^{\real f}_L$ (indeed, $\dE^{\real f}_\dC$ admits a structure over any subfield of $\dC$). Consequently, in view of Lemma \ref{lemmaExtensionCompatibility}, we obtain
\begin{equation}\label{eq:SolEHypergeom}
\SolE{(\Gm)_\infty}\big(\cH(\alpha;\beta)\big)\cong \pi^{-1}\dC_{\Gm}\otimes_{\pi^{-1}L_{\Gm}}\EE \kappa_\infty^{-1} \EE p_{\infty !!} \big(\pi^{-1}\cF^\gamma\otimes \dE_L^{\real f}[N-1]\big),
\end{equation}
so this object naturally carries a structure over $L$.
We cannot, however, expect to find a structure over $K$ in general. However, under suitable assumptions on $\gamma$, we will show that the above object carries a $K$-structure (even if $\cF^\gamma$ itself does not).

To this end, we introduce the following group theoretic condition on the exponent vector we are interested in.
\begin{definition}\label{def:Ggood}
Let $k\in \dN$ be arbitrary, and consider any vector
$\gamma=(\gamma_1,\ldots,\gamma_k)\in \dR^k$. Let $L\subset \dC$ such that $L\supset \dQ(e^{2\pi i \gamma_1},\ldots,
e^{2\pi i \gamma_k})$. For any $l\in \dN_0$, let
$$
\cO_{\dG_m^{k+l}}^\gamma:=\cO_{\dG_m^k}^\gamma\boxtimes\cO_{\dG_m^l},
$$
be defined as  in formula \eqref{eq:DefOG}. We let $\cF^\gamma$ be the $L$-local system on $\dG_m^{k+l}$ such that $\Sol{\dG_m^{k+l}}(\cO_{\dG_m^{k+l}}^\gamma)=\dC_{\dG_m^{k+l}} \otimes_{L_{\dG_m^{k+l}}}\cF^\gamma$. Let $K\subset L$ be a subfield such that $L/K$ is finite Galois and let $G:=\Gal(L/K)$ be its Galois group. Then we say that $\gamma$ is $G$-good
if there is an action
\begin{align*}
\varrho:G&\longrightarrow \Aut(\dG^k_m) \subset \Aut(\dG_k^{k+l})\\
g&\longmapsto  \varrho_g
\end{align*}
such that
for all $g\in G$ we have an isomorphism
\begin{equation}\label{eq:ConjugationCond}
\psi_g:{\varrho}_{g !} \cF^\gamma \stackrel{\cong}{\longrightarrow} \overline {\cF^\gamma}^g
\end{equation}
such that for all $g,h\in G$, the following diagram commutes:
\begin{equation}\label{eq:Cocycle}
\begin{tikzcd}
\varrho_{g !} \varrho_{h !}\cF^\gamma \arrow{rr}{\varrho_{g !}\psi_h}\arrow[equal]{drr}{\sim} && \varrho_{g !} \overline{\cF^\gamma}^h \cong \overline{\varrho_{g !} \cF^\gamma}^h\arrow{r}{\overline{\psi_g}^h} & \overline{\overline{\cF^\gamma}^g}^h\cong &[-27pt]\overline{\cF^\gamma}^{gh} \\
&&\varrho_{gh !}\cF^\gamma\arrow{rru}[swap]{\psi_{gh}}.
\end{tikzcd}
\end{equation}
\end{definition}
Notice that by requiring that the action of $G$ on $\dG$ factors over $\dG_m^k$ we impose that $G$ acts trivially on the last $l$ factors of $\dG_m^{k+l}$.

This notion of $G$-goodness will be the crucial point for the hypergeometric systems  to have a $G$-structure in the sense of Definition \ref{def:GStructure} and consequently to be defined over the field $K$. To this end, we apply the previous definition for the case $k=N-1$ and $l=1$, that is,
we let $\cF^\gamma$ be the $L$-local system on $\dG$ such that $\Sol{\dG}(\cO^\gamma_\dG) = \dC_\dG\otimes \cF^\gamma$. Moreover, for any function $f\in \cO_\dG$, we write $\dE^{\real f}\vcentcolon= \dE^{\real f}_L$ and $\dE^{\gamma,f}\vcentcolon=\pi^{-1}\cF^\gamma\otimes \dE^{\real f}\in\EbI{L}{\dG_\infty}$ for short.

\begin{proposition}\label{prop:ConjProperty}
For $N\in \dN$ and $\gamma\in\dR ^{N-1}$ let $L\subset \dC$ be as above. Choose a subfield $K\subset L$, such that
\begin{enumerate}
    \item $L/K$ is a finite Galois extension with $G=\Gal(L/K)$,
    \item $\gamma$ is $G$-good.
\end{enumerate}
Then for any $G$-invariant function $f$, i.e.\ for any $f\in\cO_\dG^{\im(\varrho)}$
(where $\varrho$ is the action from the previous definition), the object $\mathrm{E}p_{\infty !!}\dE^{\gamma,f}=\mathrm{E}p_{\infty !!}(\pi^{-1}\cF^\gamma\otimes\dE^{\real f})\in \EbI{L}{(\Gm)_\infty}$
has a $G$-structure.
\end{proposition}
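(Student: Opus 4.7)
The strategy is to construct, for each $g\in G$, an explicit isomorphism
$$
\varphi_g \colon \mathrm{E}p_{\infty!!}\dE^{\gamma,f}\;\xlongrightarrow{\sim}\; \overline{\mathrm{E}p_{\infty!!}\dE^{\gamma,f}}^{\,g}
$$
by combining the compatibility of all six operations with $g$-conjugation (Lemma \ref{lemmaConjCompat}), the hypothesis that $\gamma$ is $G$-good (which handles the local system factor $\cF^\gamma$), the $G$-invariance of $f$ (which handles the exponential factor), and the equality $p\circ\varrho_g=p$ (which absorbs the pushforward).

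More precisely, denote by $\varrho_g\colon \dG\to\dG$ the automorphism $\varrho_g\times\id_{\Gm}$. Then we chain the following isomorphisms. First, by Lemma \ref{lemmaConjCompat}(i)(iii),
$$
\overline{\mathrm{E}p_{\infty!!}(\pi^{-1}\cF^\gamma\otimes\dE^{\real f})}^{\,g}\;\cong\; \mathrm{E}p_{\infty!!}\bigl(\pi^{-1}\overline{\cF^\gamma}^{\,g}\otimes \overline{\dE^{\real f}}^{\,g}\bigr).
$$
Next, since $\dE^{\real f}=L^{\mathrm{E}}_{\dG_\infty}\conv L_{\{t+\real f\geq 0\}}$ is defined over any subfield of $L$, we have a canonical isomorphism $\overline{\dE^{\real f}}^{\,g}\cong \dE^{\real f}$. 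By the $G$-good assumption, the isomorphism $\psi_g$ from \eqref{eq:ConjugationCond} gives
$$
\pi^{-1}\overline{\cF^\gamma}^{\,g}\otimes \dE^{\real f}\;\cong\; \pi^{-1}\varrho_{g!}\cF^\gamma\otimes \dE^{\real f}.
$$
Using base change ($\pi^{-1}\varrho_{g!}\cong (\varrho_g\times\id)_{!!}\pi^{-1}$, valid since $\varrho_g$ is an isomorphism) together with the projection formula for enhanced ind-sheaves, we rewrite the right-hand side as
$$
\mathrm{E}\varrho_{g\infty!!}\bigl(\pi^{-1}\cF^\gamma\otimes \mathrm{E}\varrho_{g\infty}^{-1}\dE^{\real f}\bigr)\;\cong\; \mathrm{E}\varrho_{g\infty!!}\bigl(\pi^{-1}\cF^\gamma\otimes \dE^{\real(f\circ\varrho_g)}\bigr).
$$
The $G$-invariance $f\circ\varrho_g=f$ simplifies the exponential factor to $\dE^{\real f}$. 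Finally, applying $\mathrm{E}p_{\infty!!}$ and using $p\circ\varrho_g=p$, we conclude
$$
\mathrm{E}p_{\infty!!}\mathrm{E}\varrho_{g\infty!!}(\pi^{-1}\cF^\gamma\otimes\dE^{\real f})\;\cong\; \mathrm{E}p_{\infty!!}(\pi^{-1}\cF^\gamma\otimes\dE^{\real f}),
$$
yielding the desired $\varphi_g$.

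It then remains to verify the cocycle condition $\varphi_{gh}=\overline{\varphi_g}^{\,h}\circ\varphi_h$. This reduces, step-by-step along the chain above, to the naturality of base change and the projection formula, the obvious compatibility of the canonical identifications $\overline{\dE^{\real f}}^{\,g}\cong \dE^{\real f}$ in $g$, the equality $\varrho_{gh}=\varrho_g\circ\varrho_h$ (so that pushforwards compose correctly), and crucially the commutativity of diagram \eqref{eq:Cocycle}, which encodes exactly the cocycle identity for the isomorphisms $\psi_g$ of the $G$-good structure on $\cF^\gamma$.

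The main technical obstacle is bookkeeping: each of the six operations transforms under $g$-conjugation and under the action of $\varrho_g$ in slightly different ways, and one must check that the natural transformations expressing these compatibilities (base change, projection formula, naturality of $\pi^{-1}$, exactness of conjugation under $Q$ and $\cL$) all assemble coherently so that the cocycle diagram commutes at the level of $\mathrm{E}p_{\infty!!}\dE^{\gamma,f}$. No genuinely new ideas are needed; the argument is a functorial transport of the $G$-good data on $\cF^\gamma$ through the chain of operations $\pi^{-1}(-)\otimes\dE^{\real f}$ followed by $\mathrm{E}p_{\infty!!}$.
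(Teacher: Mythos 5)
Your proposal is correct and follows essentially the same route as the paper's proof: the same chain of isomorphisms (conjugation compatibility of $\mathrm{E}p_{\infty!!}$ and $\otimes$, the isomorphism $\psi_g$ from $G$-goodness, the projection formula with $\mathrm{E}\varrho_{g\infty}^{-1}\dE^{\real f}\cong\dE^{\real f}$ from $G$-invariance of $f$, and $p\circ\varrho_g=p$), followed by reducing the cocycle condition to naturality of these isomorphisms and the commutativity of diagram \eqref{eq:Cocycle}. No substantive differences.
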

\begin{proof}
By definition, the condition that $\gamma$ is $G$-good means that we have isomorphisms
$$\psi_g\colon \varrho_{g !}\cF^\gamma\xlongrightarrow{\cong} \overline{\cF^\gamma}^g$$
for any $g\in G$ (satisfying the compatibility condition given by diagram \eqref{eq:Cocycle}).

Furthermore, there are isomorphisms for any $g\in G$
\begin{equation}\label{eq:canonicalMorphExp}
\dE^{\mathrm{Re}\, f}\xlongrightarrow{\cong} \overline{\dE^{\mathrm{Re}\, f}}^g\quad\text{and}\quad \mathrm{E}(\varrho_{g})_{\infty}^{-1}\dE^{\mathrm{Re}\, f} \cong \dE^{\mathrm{Re}\, (f\circ \varrho_g)} \cong \dE^{\mathrm{Re}\, f},
\end{equation}
where the first isomorphism is given by the action of $g$ on $L$ (and hence we will denote it simply by $g$) and the second isomorphism follows from \cite[Remark 3.3.21]{DK16} and the fact that $f$ is invariant under the action of $\varrho_g$.

Then, we can conclude using the projection formula:
\begin{align}\label{eq:IsosToConjugate}
\overline{\mathrm{E}p_{\infty !!}(\pi^{-1}\cF^\gamma\otimes \dE^{\real f})}^g&\cong
\mathrm{E}p_{\infty !!}\big(\pi^{-1}\overline{\cF^\gamma}^g\otimes \overline{\dE^{\mathrm{Re}\,f}}^g\big)\notag\\
&\xleftarrow{\sim} \mathrm{E}p_{\infty !!}\big(\pi^{-1}\varrho_{g !}\cF^\gamma\otimes \dE^{\mathrm{Re}\,f}\big)\notag\\
&\cong \mathrm{E}(p\circ \varrho_g )_{\infty !!}\big(\pi^{-1}\cF^\gamma\otimes \mathrm{E}(\varrho_g)_\infty^{-1}\dE^{\mathrm{Re}\,f}\big)\notag\\ &\cong\mathrm{E}p_{\infty !!}(\pi^{-1}\cF^\gamma \otimes \dE^{\real f}),
\end{align}
where we used $p\circ\varrho_g=p$ in the last isomorphism and the second isomorphism is induced by $\psi_g$ and the action of $g$ on the first and second factor of the tensor product, respectively.

Let us call these isomorphisms $\varphi_g: \mathrm{E}p_{\infty !!}\dE^{\gamma,f}\to \overline{\mathrm{E}p_{\infty !!}\dE^{\gamma,f}}^g$ (from right to left in \eqref{eq:IsosToConjugate}). It remains to check that these isomorphisms satisfy the compatibilities from Definition \ref{def:GStructure}, i.e.\ that for $g,h\in G$ the isomorphism
\begin{align*}
\varphi_{gh}: \mathrm{E}p_{\infty !!}\dE^{\gamma,f}\to \overline{\mathrm{E}p_{\infty !!}\dE^{\gamma,f}}^{gh}
\end{align*}
coincides with the isomorphism
\begin{align*}\mathrm{E}p_{\infty !!}\dE^{\gamma,f}\xrightarrow{\varphi_h} \overline{\mathrm{E}p_{\infty !!}\dE^{\gamma,f}}^h \xrightarrow{\overline{\varphi_g}^h} \overline{\overline{\mathrm{E}p_{\infty !!}\dE^{\gamma,f}}^{g}}^h \cong \overline{\mathrm{E}p_{\infty !!}\dE^{\gamma,f}}^{gh}.
\end{align*}
This is mostly a matter of checking that ``resolving'' the $gh$-conjugation is equivalent to resolving the $g$- and $h$-conjugations one after another in \eqref{eq:IsosToConjugate}. Since many of the isomorphisms used in \eqref{eq:IsosToConjugate} are natural (such as the projection formula and the isomorphisms from Lemma \ref{lemmaConjCompat}), the main step is to check the term in parentheses and see that the isomorphism
$$
\pi^{-1}\overline{\cF^\gamma}^{gh}\otimes \overline{\dE^{\mathrm{Re}\,f}}^{gh}\xlongleftarrow{\pi^{-1}\psi_{gh}\otimes (gh)} \pi^{-1}\varrho_{gh!}\cF^\gamma \otimes \dE^{\mathrm{Re}\,f}$$
coincides with
$$
\pi^{-1}\overline{\overline{\cF^\gamma}^g}^h\otimes \overline{\overline{\dE^{\mathrm{Re}\,f}}^g}^h\xlongleftarrow{\pi^{-1}\overline{\psi_g}^h\otimes\overline{g}^h} \pi^{-1}\overline{\varrho_{g  !}\cF^\gamma}^h\otimes \overline{\dE^{\mathrm{Re}\,f}}^h\cong \pi^{-1}\varrho_{g  !}\overline{\cF^\gamma}^h\otimes \overline{\dE^{\mathrm{Re}\,f}}^h\xlongleftarrow{\pi^{-1}\varrho_{g !}\psi_h \otimes h} \pi^{-1}\varrho_{g !}\varrho_{h !}\cF^\gamma \otimes \dE^{\mathrm{Re}\,f}$$
(recall that $g$, $h$ and $gh$ here denote the morphisms induced by the action of the respective Galois group elements, as in the first isomorphism in \eqref{eq:canonicalMorphExp}).
But this holds by \eqref{eq:Cocycle} since $\gamma$ was chosen to be $G$-good.
\end{proof}

We now specify to the case we are mostly interested in, that is, we let $\gamma=(\beta_1,\ldots,\beta_m,\alpha_2,\ldots,\alpha_n)\in[0,1)^{N-1}$, with $\alpha_1=0$ and $\alpha_i\neq \beta_j$ for all $i\in\{1,\ldots,n\}$ and all $j\in\{1,\ldots,m\}$. Then we have the following.
\begin{corollary}\label{cor:Kstruct-directimage}
Under the assumptions of the previous Proposition~\ref{prop:ConjProperty} on $\gamma$, $K$ and $f$, the enhanced ind-sheaf $\SolE{(\Gm)_\infty}(\kappa^+p_+\cE^{\gamma,f})$ admits a $K$-structure. More precisely, there exists $H_K\in \EbRcI{K}{(\Gm)_\infty}$ such that
$$
\SolE{(\Gm)_\infty}(\kappa^+p_+\cE^{\gamma,f})\cong \pi^{-1}\dC_{\Gm}\otimes_{\pi^{-1}K_{\Gm}}H_K.
$$
\end{corollary}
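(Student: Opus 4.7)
The plan is to assemble the $G$-structure produced by Proposition \ref{prop:ConjProperty} on the Gau\ss--Manin side, transport it through the remaining functors, and then invoke the Galois descent statement of Proposition \ref{prop:KStructure} to produce the desired $K$-structure. Concretely, combining the isomorphism $\cH(\alpha;\beta)\cong \kappa^+ p_+\cE^{\gamma,f}$ from Proposition \ref{prop:HypGMSystem} with \eqref{eq:SolEHypergeom}, one has
$$
\SolE{(\Gm)_\infty}(\kappa^+p_+\cE^{\gamma,f})\cong\pi^{-1}\dC_{\Gm}\otimes_{\pi^{-1}L_{\Gm}}H,
$$
where I set $H:=\EE\kappa_\infty^{-1}\EE p_{\infty!!}\big(\pi^{-1}\cF^\gamma\otimes \dE_L^{\real f}[N-1]\big)\in\EbI{L}{(\Gm)_\infty}$. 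It therefore suffices to show that $H$ itself has a $K$-structure in the sense of Definition \ref{def:KStructure}.

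The first step is to equip $H$ with a $G$-structure. Since by the hypotheses of the previous proposition the function $f$ is $G$-invariant, Proposition \ref{prop:ConjProperty} endows $\EE p_{\infty!!}(\pi^{-1}\cF^\gamma\otimes\dE_L^{\real f})$ with isomorphisms satisfying the cocycle condition of Definition \ref{def:GStructure}. These transport across the shift $[N-1]$ and across $\EE\kappa_\infty^{-1}$ by naturality together with Lemma \ref{lemmaConjCompat}(ii), which ensures that $\EE\kappa_\infty^{-1}$ commutes with $g$-conjugation compatibly under composition of Galois elements. Next, I would verify the structural hypotheses of Proposition \ref{prop:KStructure}: the natural completion $\Gm\hookrightarrow \mathbb{P}^1$ makes $(\Gm)_\infty$ a real analytic bordered space whose underlying open is relatively compact and subanalytic, and $\dR$-constructibility of $H$ is inherited from that of $\cF^\gamma$ and $\dE_L^{\real f}$ under the Grothendieck operations involved.

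The main obstacle will be to check that $H$ lies in the heart $\mathrm{E}^0(\mathrm{I}L_{(\Gm)_\infty})$ of the standard t-structure. For this I would observe that the defining operator $P=\prod_{i=1}^n(q\partial_q-\alpha_i)-q\prod_{j=1}^m(q\partial_q-\beta_j)$ has leading term $q^n\partial_q^n$ on $\Gm$ (since $n>m$), with coefficient invertible on $\Gm$. Hence $\cH(\alpha;\beta)$ is an integrable connection of rank $n$ on $\Gm$, and Lemma \ref{lemmaMochizukiDegreeZero} yields $\SolE{(\Gm)_\infty}(\cH(\alpha;\beta))\in\mathrm{E}^0(\mathrm{I}\dC_{(\Gm)_\infty})$. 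Since extension of scalars commutes with the cohomology functors of the standard t-structure (Lemma \ref{lemmaExtensionCompatibility}(iv)) and is conservative (as $\dC$ is faithfully flat over $L$), one deduces $H\in\mathrm{E}^0(\mathrm{I}L_{(\Gm)_\infty})$. Proposition \ref{prop:KStructure} then provides $H_K\in\EbI{K}{(\Gm)_\infty}$ with $H\cong \pi^{-1}L_{\Gm}\otimes_{\pi^{-1}K_{\Gm}}H_K$, and associativity of tensor products gives the claimed isomorphism $\SolE{(\Gm)_\infty}(\kappa^+p_+\cE^{\gamma,f})\cong\pi^{-1}\dC_{\Gm}\otimes_{\pi^{-1}K_{\Gm}}H_K$.
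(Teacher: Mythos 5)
Your proposal follows essentially the same route as the paper: extract the $L$-lift from \eqref{eq:SolEHypergeom}, endow it with a $G$-structure via Proposition \ref{prop:ConjProperty} and Lemma \ref{lemmaConjCompat}, check membership in the heart using Lemma \ref{lemmaMochizukiDegreeZero} (your explicit verification that $\cH(\alpha;\beta)$ is an integrable connection of rank $n$ on $\Gm$ is a welcome detail the paper leaves implicit), and conclude with Proposition \ref{prop:KStructure}. The one point where you deviate is the deduction that the full complex $H=\EE\kappa_\infty^{-1}\EE p_{\infty!!}(\pi^{-1}\cF^\gamma\otimes\dE^{\real f}_L[N-1])$ lies in $\mathrm{E}^0(\mathrm{I}L_{(\Gm)_\infty})$: you invoke conservativity of the scalar-extension functor $\pi^{-1}\dC\otimes_{\pi^{-1}L}(\bullet)$ on enhanced ind-sheaves, which is plausible (and can be proved via the left adjoint $\cL$ and a splitting argument on ind-objects) but is nowhere established in the paper, so as written it is an unjustified step. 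The paper sidesteps this entirely by replacing $H$ with $\cH^0(H)$, which is in the heart by construction, still carries the $G$-structure by Lemma \ref{lemmaConjCompat}(v), and satisfies $\pi^{-1}\dC_{\Gm}\otimes_{\pi^{-1}L_{\Gm}}\cH^0(H)\cong\SolE{(\Gm)_\infty}(\kappa^+p_+\cE^{\gamma,f})$ by Lemma \ref{lemmaExtensionCompatibility}(iv); adopting that small substitution would make your argument complete without any new lemma.
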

\begin{proof}
Since the $\cD_{\Gm}$-module $\kappa^+p_+\cE^{\gamma,f}$ is concentrated in degree $0$ and has no singularities on $\Gm$ (recall that it is isomorphic to an irregular hypergeometric module by Proposition~\ref{prop:PropHyp}, whose singular points are at $0$ and $\infty$), we know from Lemma \ref{lemmaMochizukiDegreeZero} that also $\SolE{(\Gm)_\infty}(\kappa^+p_+\cE^{\gamma,f})\in \mathrm{E}^0(\mathrm{I}\dC_{(\Gm)_\infty})$. By our preliminary observations in \eqref{eq:SolEHypergeom}, we have
\begin{align*}
	\SolE{(\Gm)_\infty}\big(\kappa^+p_+\cE^{\gamma,f}\big)&\cong \pi^{-1}\dC_{\Gm}\otimes_{\pi^{-1}L_{\Gm}}\EE \kappa_\infty^{-1} \EE p_{\infty !!} \big(\dE^{\gamma,f}[N-1]\big)
\end{align*}
and since the functor $\pi^{-1}\dC_{\Gm}\otimes_{\pi^{-1}L_{\Gm}}(\bullet)$ is exact with respect to the standard t-structure (see Proposition~\ref{prop:tExactnessScalarExt}), it follows that $(\EE \kappa_\infty^{-1} \EE p_{\infty !!} \big(\dE^{\gamma,f}[N-1]\big))\in \mathrm{E}^0(\mathrm{I}L_{(\Gm)_\infty})$.

The object $\EE p_{\infty !!}\dE^{\gamma,f}\in\EbRcI{L}{(\Gm)_\infty}$ carries a $G$-structure by Proposition \ref{prop:ConjProperty}, and so does
$$
\EE \kappa_\infty^{-1} \EE p_{\infty !!} \big(\dE^{\gamma,f}[N-1]\big)\in\mathrm{E}^0(\mathrm{I}L_{(\Gm)_\infty})\cap\EbRcI{L}{(\Gm)_\infty}
$$
by Lemma \ref{lemmaConjCompat}. We are now in the situation to apply Proposition~\ref{prop:KStructure} and obtain the desired object $H_K\in \mathrm{E}^0(\mathrm{I}K_{(\Gm)_\infty})\cap\EbRcI{K}{(\Gm)_\infty}$ satisfying
\begin{align*}
	\SolE{(\Gm)_\infty}(\kappa^+p_+\cE^{\gamma,f})&\cong \pi^{-1}\dC_{\Gm}\otimes_{\pi^{-1}L_{\Gm}} \EE \kappa_\infty^{-1} \mathrm{E}\widetilde{p}_{!!}(\dE^{\gamma,f}[N-1])\\
	&\cong \pi^{-1}\dC_{\Gm}\otimes_{\pi^{-1}L_{\Gm}} (\pi^{-1}L_{\Gm} \otimes_{\pi^{-1}K_{\Gm}} H_K)\\
	&\cong \pi^{-1}\dC_{\Gm}\otimes_{\pi^{-1}K_{\Gm}} H_K.
\end{align*}
\end{proof}

Putting these results together, we arrive at the following first main result.
\begin{theorem}\label{theo:MainTheo}
Let $n>m$, put $N:=n+m$ and let $\gamma=(\beta_1,\ldots,\beta_m,\alpha_2,\ldots,\alpha_n)\in [0,1)^{N-1}$ be given,  where $\alpha_i\neq \beta_j$
for all $i\in\{1,\ldots,n\}$ and for all $j\in \{1,\ldots,m\}$ (with $\alpha_1=0$). Write again
$\cH(\alpha;\beta)=\cH(0,\alpha_2,\ldots,\alpha_n;\beta_1,\ldots,\beta_m)$ for the corresponding irreducible hypergeometric system. Consider $L\subset \dC$ such that $L\supset\dQ(e^{2\pi i \gamma_1},\ldots, e^{2\pi i\gamma_{N-1}})$ and let $K\subset L$ be a subfield satisfying the following properties
\begin{enumerate}
    \item $L/K$ is a finite Galois extension with $G:=\Gal(L/K)$,
    \item $\gamma$ is $G$-good,
    \item the function
    $$
    f:=x_1+\ldots+x_m+\frac{1}{x_{m+1}}+\ldots+ \frac{1}{x_{N-1}}+q\cdot x_1\cdot\ldots\cdot x_{N-1}\in \cO_\dG
    $$
    is $G$-invariant, in the sense that $f\in \cO_\dG^{\im(\varrho)}$, where $\varrho$ is the action from Definition \ref{def:Ggood}.
\end{enumerate}
Then the enhanced ind-sheaf $\SolE{(\Gm)_\infty}\big(\cH(\alpha;\beta)\big)$
has a $K$-structure in the sense of Definition \ref{def:KStructure}. More precisely, there is some
$H_K\in \EbRcI{K}{(\Gm)_\infty}$ such that $\SolE{(\Gm)_\infty}(\cH(\alpha;\beta))\cong \pi^{-1}\dC_{\Gm} \otimes_{\pi^{-1}K_{\Gm}} H_K$.

\end{theorem}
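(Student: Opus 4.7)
My plan is to assemble the theorem directly from the two principal preparatory results: the geometric realization of Proposition \ref{prop:HypGMSystem} and the Galois-descent statement of Corollary \ref{cor:Kstruct-directimage}. The hypotheses of the theorem are precisely engineered so that both results apply, leaving essentially nothing new to prove beyond an application of the enhanced solution functor to an established $\cD$-module isomorphism.

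First I would invoke Proposition \ref{prop:HypGMSystem}, which, using $\alpha_1=0$, $n>m$, and $\alpha_i\neq\beta_j$, produces an isomorphism
$$\cH(\alpha;\beta)\cong \kappa^+ p_+ \cE^{\gamma,f}$$
of holonomic $\cD_{\Gm}$-modules, together with the vanishing $\cH^i p_+\cE^{\gamma,f}=0$ for $i\neq 0$. Applying the algebraic enhanced solution functor $\SolE{(\Gm)_\infty}$ to this isomorphism reduces the question of the existence of a $K$-structure on $\SolE{(\Gm)_\infty}(\cH(\alpha;\beta))$ to the same question for $\SolE{(\Gm)_\infty}(\kappa^+ p_+ \cE^{\gamma,f})$.

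At this point I would directly invoke Corollary \ref{cor:Kstruct-directimage}, whose hypotheses match conditions (1)--(3) of the theorem verbatim: $L/K$ is a finite Galois extension, $\gamma$ is $G$-good, and $f$ is $G$-invariant in the sense that $f\in\cO_\dG^{\im(\varrho)}$. The corollary then provides an object $H_K\in \EbI{K}{(\Gm)_\infty}$ together with an isomorphism
$$\SolE{(\Gm)_\infty}(\kappa^+ p_+ \cE^{\gamma,f})\cong \pi^{-1}\dC_{\Gm}\otimes_{\pi^{-1}K_{\Gm}} H_K,$$
which, combined with the preceding reduction, yields the $K$-structure asserted by the theorem.

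There is no substantive obstacle to address at this step: all the real work has already been done, namely the geometric realization via the matrix $A$ and the Schulze--Walther/Reichelt input, the compatibility of extension of scalars with the six operations on enhanced ind-sheaves (Lemma \ref{lemmaExtensionCompatibility}), and the Galois-descent argument for $\dR$-constructible enhanced ind-sheaves concentrated in degree zero (Proposition \ref{prop:KStructure}, together with Lemma \ref{lemmaMochizukiDegreeZero} to ensure we are in that degree). The theorem is thus a clean functorial packaging of Proposition \ref{prop:HypGMSystem} and Corollary \ref{cor:Kstruct-directimage}.
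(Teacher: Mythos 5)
Your proposal is correct and is exactly the paper's argument: the paper proves this theorem in one line by combining the isomorphism \eqref{eq:FourierHypergeom} from Proposition \ref{prop:HypGMSystem} with Corollary \ref{cor:Kstruct-directimage}, precisely as you do. Your additional remarks about the supporting lemmas (Lemma \ref{lemmaExtensionCompatibility}, Lemma \ref{lemmaMochizukiDegreeZero}, Proposition \ref{prop:KStructure}) accurately identify where the real work was done but add nothing beyond the paper's own reduction.
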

\begin{proof}
This follows by combining Corollary \ref{cor:Kstruct-directimage} with the isomorphism
\eqref{eq:FourierHypergeom} in Proposition \ref{prop:HypGMSystem}.
\end{proof}

In the remainder of this section, we will remove the two conditions $n>m$ and $\alpha_1=0$ that we had to impose so far.
This will be done in two steps.

Consider first the case where we are given numbers $n,m\in \dN_0$ (with $(n,m)\neq (0,0)$), where now we only ask that $n\geq m$. Let $\alpha_1,\ldots,\alpha_n\in  [0,1)$ and $\beta_1,\ldots,\beta_m\in (0,1)$ be given, with $\alpha_i\neq \beta_j$. The case where at least one of the numbers $\beta_j$ equals zero is thus excluded, and will be treated later. We still want to consider irreducible hypergeometric modules, and thus assume that
$\alpha_i\neq \beta_j$ for all $i\in\{1,\ldots,n\}$ and all $j\in\{1,\ldots,m\}$. We let $\gamma:=
(\beta_1,\ldots,\beta_m,\alpha_1,\ldots,\alpha_n)\in [0,1)^N$, and, as before, we consider a field $L\subset \dC$ such that $e^{2\pi i \gamma_1},\ldots,e^{2\pi i \gamma_N}\in L$. Then the following holds.

\begin{corollary}\label{cor:CaseNonZero}
Under these hypotheses, let $K\subset L$ be a field such that
\begin{enumerate}
\item $L/K$ is finite Galois with Galois group $G$,
\item $\gamma$ is $G$-good, where now
the numbers $k,l$ in Definition \ref{def:Ggood} are $k=N$ and $l=1$,
\item the function
$$
f:=x_1+\ldots+x_m+\frac{1}{x_{m+1}}+\ldots+ \frac{1}{x_N}+q\cdot x_1\cdot\ldots\cdot x_N\in \cO_\dG =\cO_{\dG_m^N}\boxtimes\cO_{\Gm}
$$
is $G$-invariant.
\end{enumerate}
Then the enhanced ind-sheaf $\SolE{(\Gm)_\infty}\big(\cH(\alpha;\beta)\big)$
has a $K$-structure. More precisely, there exists
$H_K\in \EbRcI{K}{(\Gm)_\infty}$ such that $\SolE{(\Gm)_\infty}(\cH(\alpha;\beta))\cong \pi^{-1}\dC_{\Gm} \otimes_{\pi^{-1}K_{\Gm}} H_K$.
\end{corollary}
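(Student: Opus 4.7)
My plan is to bypass Theorem~\ref{theo:MainTheo} and invoke Corollary~\ref{cor:Kstruct-directimage} directly with the $\gamma$ and $f$ prescribed in the Corollary. That corollary provides a $K$-structure on $\SolE{(\Gm)_\infty}(\kappa^+ p_+ \cE^{\gamma,f})$ under precisely the hypotheses of Proposition~\ref{prop:ConjProperty} ($L/K$ finite Galois with group $G$, $\gamma$ is $G$-good, $f$ is $G$-invariant), which are exactly hypotheses (1)--(3) of the present Corollary. So once the geometric realization $\kappa^+ p_+ \cE^{\gamma,f} \cong \cH(\alpha;\beta)$ is established, the conclusion is immediate.

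The relevant Proposition~\ref{prop:ConjProperty} and Corollary~\ref{cor:Kstruct-directimage} were stated with $\gamma \in \dR^{N-1}$ and a Laurent polynomial in $N-1$ variables, but their proofs rely only on the formal properties of the torus, the twist $\cO^\gamma$, and the exponential module $\cE^f$; they transfer verbatim to the present setup, in which $\gamma \in \dR^N$ and $f$ has $N$ variables, the only change being the bookkeeping in Definition~\ref{def:Ggood} where one takes $k = N$ and $l = 1$. Thus the only real content is the geometric identification, which is an extension of Proposition~\ref{prop:HypGMSystem} to the regime $n \geq m$ without the assumption $\alpha_1 = 0$.

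I expect this extension to be the main obstacle. To prove it, I would follow the proof of Proposition~\ref{prop:HypGMSystem} step by step: write down the exponent matrix $A$ for the $N$ monomials appearing in $f$, apply Lemma~\ref{lem:ApplySchulzeWalther} (whose statement is already formulated for arbitrary $\gamma$ and transfers after re-indexing), perform the twisted Fourier--Laplace computation to determine the annihilating operator of $\cH^0 p_+ \cE^{\gamma,f}$, and apply $\kappa^+$. The delicate part is the non-characteristic embedding argument at the end of that proof, which originally used the ampleness of the anticanonical divisor of the toric bundle $\cO_{\dP^{n-1}}(-1)^m$, an input that strictly requires $n > m$. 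In the boundary case $n = m$, I would replace this ampleness input by a direct Kouchnirenko-style non-degeneracy check for $f$, using its Newton polytope and Adolphson's criteria; this gives the required smoothness of the GKZ-system $\cM_A^\gamma$ on the torus locus where the embedding $k$ lands, hence the non-characteristicity needed to concentrate $p_+ \cE^{\gamma,f}$ in a single cohomology degree and to extract it as a cyclic $\cD_{\Gm}$-module isomorphic to $\cH(\alpha;\beta)$.
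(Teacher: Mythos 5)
Your plan fails at its central step: the geometric identification you want, namely $\kappa^+p_+\cE^{\gamma,f}\cong\cH(\alpha;\beta)$ for the $N$-variable Laurent polynomial $f$ and $\gamma\in[0,1)^N$ of this corollary, is not an ``extension of Proposition~\ref{prop:HypGMSystem} to the regime $n\geq m$ without $\alpha_1=0$'' --- it is false, independently of any non-characteristicity issue. If you run the Euler-operator/Fourier--Laplace computation from the proof of Proposition~\ref{prop:HypGMSystem} with the exponent matrix of the $N+1$ monomials of this $f$, the distinguished monomial $q\cdot x_1\cdots x_N$ pairs with the Fourier variable and contributes the \emph{untwisted} factor ($y_1$ is replaced by $\partial_q$), so after clearing $q$ and applying $\kappa$ the cyclic generator is annihilated by $(q\partial_q)\prod_{i=1}^{n}(q\partial_q-\alpha_i)-q\prod_{j=1}^{m}(q\partial_q-\beta_j)$ (up to the sign normalization by $\kappa$). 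In other words, twisting all $k=N$ torus coordinates produces the rank-$(n{+}1)$ \emph{augmented} system $\cH(0,\alpha_1,\ldots,\alpha_n;\beta)$, never the rank-$n$ module $\cH(\alpha;\beta)$: an upper exponent equal to $0$ is forced by the construction, which is exactly why the paper calls $\alpha_1=0$ a crucial hypothesis. Consequently, invoking Corollary~\ref{cor:Kstruct-directimage} with the data of the present corollary only gives a $K$-structure on $\SolE{(\Gm)_\infty}(\cH(0,\alpha;\beta))$ --- which is precisely what Theorem~\ref{theo:MainTheo} already provides (note $n+1>m$ holds automatically) --- and says nothing yet about $\cH(\alpha;\beta)$.

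The real content of the proof is therefore the descent from $\cH(0,\alpha;\beta)$ to $\cH(\alpha;\beta)$, which your proposal omits entirely. The paper does this via Katz's identity $\cH(0,\alpha;\beta)\cong j^+\FL(j_+\mathrm{inv}_+\cH(\alpha;\beta))$, the exact sequence \eqref{eq:SplitQ} splitting off $\cD_{\dA^1}/\cD_{\dA^1}q$ from $j_\dag\cH(0,\alpha;\beta)$, a $(can,var)$-argument for germs of perverse sheaves on a disc showing that $\Sol{\Delta_r}(\cD_{\dA^1}/\cD_{\dA^1}Q)$ inherits a $K$-structure, a gluing argument over a cover of $\dP^{1,\mathrm{an}}$ to lift this to the enhanced level, and the compatibility of $\mathrm{E}\,\mathrm{inv}_{\infty!!}$, $\mathrm{E}j_\infty^{-1}$ and the enhanced Fourier--Laplace kernel with $K$-structures, so that \eqref{eq:FLTrafoKatz} transports the $K$-structure to $\SolE{(\Gm)_\infty}(\cH(\alpha;\beta))$. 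Your fallback for $n=m$ (a Kouchnirenko-type non-degeneracy check) would not rescue a direct realization either: in that case $\cH(\alpha;\beta)$ is regular with a singular point inside $\dG_m$, so the relevant GKZ module cannot be smooth along the whole embedded torus line where $k$ lands, the concentration argument breaks down, and moreover the appeal to Lemma~\ref{lemmaMochizukiDegreeZero} in Corollary~\ref{cor:Kstruct-directimage} requires an integrable connection on $\Gm$, which $\cH(\alpha;\beta)$ is not when $n=m$. The paper avoids all of this precisely because the reduction runs through the augmented confluent system, for which $n+1>m$ and the ampleness argument applies.
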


\begin{proof}[Proof of Corollary \ref{cor:CaseNonZero}]
	In \cite[Proposition 5.3.3]{Ka}, N.\ Katz showed that one has
	$$\cH(0,\alpha;\beta)\cong j^+ \FL (j_+\mathrm{inv}_+\cH(\alpha;\beta)),$$
	where $\mathrm{inv}\colon \dG_m\to\dG_m, q\mapsto q^{-1}$ is given by multiplicative inversion, $j\colon \dG_m\hookrightarrow\dA^1$ is the embedding and $\FL(\bullet)$ is the Fourier transform with kernel $e^{qw}$ for $\cD$-modules on $\dA^1$. In particular, $\FL (j_+\mathrm{inv}_+\cH(\alpha;\beta))$ is an extension of $\cH(0,\alpha;\beta)$  and hence there exist natural morphisms
	$$
		j_\dagger\cH(0,\alpha;\beta)\longrightarrow \FL (j_+\mathrm{inv}_+\cH(\alpha;\beta))\longrightarrow j_+\cH(0,\alpha;\beta)
	$$
	whose composition is the canonical morphism $j_\dagger \cH(0,\alpha;\beta)\xlongrightarrow{c}j_+\cH(0,\alpha;\beta)$.
	This follows by applying the canonical
	morphisms of functors $j_\dagger j^\dagger = j_\dagger j^+ \rightarrow \id$ and $\id\rightarrow j_+j^+$ (which are part of the  adjunction triangles) to the module $\FL (j_+\mathrm{inv}_+\cH(\alpha;\beta))$.
	Here $j_+$ (resp.\ $j_\dagger$) denotes the direct image (resp.\ proper direct image) for $\cD$-modules.
		This induces morphisms
	$$\mathrm{inv}^+ j^+\FL^{-1}(j_\dagger\cH(0,\alpha;\beta))\xlongrightarrow{b_1} \cH(\alpha;\beta)\xlongrightarrow{b_2} \mathrm{inv}^+ j^+\FL^{-1}(j_+\cH(0,\alpha;\beta)).$$
	We claim that $b_1$ and $b_2$ are nonzero, and for this it obviously suffices to show that their composition is not the zero map. Moreover, since $\mathrm{inv}^+$ is an involution, it is sufficient to show that the morphism
	$$
	j^+\FL^{-1}(c)\colon j^+\FL^{-1}(j_\dagger\cH(0,\alpha;\beta))  \longrightarrow  j^+\FL^{-1}(j_+\cH(0,\alpha;\beta))
	$$
	induced by the canonical morphism
	$c\colon j_\dagger\cH(0,\alpha;\beta))  \rightarrow  j_+\cH(0,\alpha;\beta)$
	is not zero. Consider first the morphism
	$$
	\FL^{-1}(c)\colon\FL^{-1}(j_\dagger\cH(0,\alpha;\beta))  \longrightarrow  \FL^{-1}(j_+\cH(0,\alpha;\beta)),
	$$
	then we have, denoting by $j_{\dagger+}$ the middle extension,
	$$
	    \im(\FL^{-1}(c))=\FL^{-1}(\im(c))=\FL^{-1}(j_{\dagger+}\cH(0,\alpha;\beta)),
	$$
	where we use that the functor $\FL^{-1}$ is exact (for the standard t-structure on $\textup{Mod}(\cD_{\dA^1})$). Now clearly $j_{\dagger+}\cH(0,\alpha;\beta)$ is not a free $\cO_{\dA^1}$-module
	(since $\cH(0,\alpha;\beta)$ has non-trivial monodromy), therefore
	the module $\FL^{-1}(j_{\dagger+}\cH(0,\alpha;\beta))$ is not supported on $\{0\}\subset \dA^1$ and consequently
	$$
	\im(j^+\FL^{-1}(c))=j^+\im(\FL^{-1}(c))=j^+\FL^{-1}(j_{\dagger+}\cH(0,\alpha;\beta))\neq 0,
	$$
	where we have used that also $j^+$ is exact. This shows that $j^+\FL^{-1}(c)\neq 0$, hence $\mathrm{inv}^+j^+\FL^{-1}(c)\neq 0$,  and therefore neither of the morphisms $b_1$ nor $b_2$ can be zero, which proves the claim made above.
	
	Now notice that $\cH(\alpha;\beta)$ is irreducible, from which it follows that $b_1$ must be an epimorphism and $b_2$ is a monomorphism. In other words, $\cH(\alpha;\beta)$ is the image of the canonical morphism $c=b_2\circ b_1$.
	
	Now we know from \cite{EnhPerv} that the contravariant functor $\SolE{(\dG_m)_\infty}(\bullet)[1]$ is exact with respect to the standard t-structure on $\Dbhol{\dG_m}$ and the middle perversity t-structure on $\EbRcI{\dC}{(\dG_m)_\infty}$, which means that $\SolE{(\dG_m)_\infty}(\cH(\alpha;\beta))$ is the coimage of the canonical morphism
	\begin{equation}\label{eq:canonicalMorphSolE}
		\SolE{(\dG_m)_\infty}\big(\mathrm{inv}^+ j^+\FL^{-1}(j_+\cH(0,\alpha;\beta))\big)\longrightarrow \SolE{(\dG_m)_\infty}\big(\mathrm{inv}^+ j^+\FL^{-1}(j_\dagger\cH(0,\alpha;\beta))\big),
	\end{equation}
	which is a strict morphism in the quasi-abelian category ${}^{1/2}\EE_{\rc}^1(\mathrm{I}\dC_{(\dG_m)_\infty})={}^{1/2}\EE_{\rc}^0(\mathrm{I}\dC_{(\dG_m)_\infty})[-1]$ by Lemma~\ref{lemma:SolEstrictness}. Therefore, by Corollary~\ref{cor:scalarExt-kernel} it suffices to show that the domain and target of \eqref{eq:canonicalMorphSolE} admit $K$-structures and the morphism \eqref{eq:canonicalMorphSolE} is compatible with these.
	
	Since the enhanced solution functor commutes with direct and inverse images (see \ref{lemmaAlgRHCompat}), the morphism \eqref{eq:canonicalMorphSolE} is equal to the morphism
	\begin{equation}\label{eq:canonicalMorphSolE2}
	\EE\,\mathrm{inv}^{-1}\,\EE j^{-1}\, {^\EE}\mathrm{FL}^{-1}\,
	\SolE{\dA^1_\infty}\big(j_+\cH(0,\alpha;\beta)\big)
	\longrightarrow
	\EE\,\mathrm{inv}^{-1}\,\EE j^{-1}\, {^\EE}\mathrm{FL}^{-1}\SolE{\dA^1_\infty}\big(j_\dagger\cH(0,\alpha;\beta)\big),
	\end{equation}
	induced by the canonical morphism
	\begin{equation}\label{eq:canonicalMorphA1}
	\SolE{\dA^1_\infty}(j_+\cH(0,\alpha;\beta))
	\longrightarrow
	\SolE{\dA^1_\infty}(j_\dagger\cH(0,\alpha;\beta)).
	\end{equation}
	Here, we denoted by ${^\EE}\mathrm{FL}^{-1}(\bullet)$ the topological counterpart for enhanced ind-sheaves of $\FL^{-1}$. This functor is given by $\EE p_{1\infty !!}(\dE^{-qw}\conv \EE p_{2\infty}^{-1}(\bullet))[1]$ (with $p_1,p_2\colon \dA^1\times\dA^1\to\dA^1$ the projections to the first and second factor, respectively; see e.g.\ \cite{KS16} for a study of integral transforms in the context of enhanced ind-sheaves). Hence, by Lemma~\ref{lemmaExtensionCompatibility} all the functors for enhanced ind-sheaves appearing in \eqref{eq:canonicalMorphSolE2} are compatible with extension of scalars, and it is therefore sufficient to prove that the morphism \eqref{eq:canonicalMorphA1} is defined over $K$.
	
	Let us show that the right-hand side of \eqref{eq:canonicalMorphA1} has a $K$-structure:
	\begin{align*}
		\SolE{\dA^1_\infty}(j_\dagger\cH(0,\alpha;\beta))&\cong \SolE{\dA^1_\infty}(\dD_{\dA^1}j_+\dD_{\dG_m}\cH(0,\alpha;\beta))\\
		&\cong \DE{\dA^1_\infty}\SolE{\dA^1_\infty}(j_+\dD_{\dG_m}\cH(0,\alpha;\beta))[-2]\\
		&\cong \DE{\dA^1_\infty}\big(\pi^{-1}\dC_{\dG_m}\otimes \SolE{\dA^1_\infty}(j_+\dD_{\dG_m}\cH(0,\alpha;\beta))\big)[-2]\\
		&\cong \DE{\dA^1_\infty}\big(\EE j_{\infty !!}\EE j_\infty^{-1} \SolE{\dA^1_\infty}(j_+\dD_{\dG_m}\cH(0,\alpha;\beta))\big)[-2]\\
		&\cong \DE{\dA^1_\infty}\EE j_{\infty !!} \SolE{(\dG_m)_\infty}(\dD_{\dG_m}\cH(0,\alpha;\beta))[-2]\\
		&\cong \DE{\dA^1_\infty}\EE j_{\infty !!}\DE{(\dG_m)_\infty} \SolE{(\dG_m)_\infty}(\cH(0,\alpha;\beta)).
	\end{align*}
The second and last isomorphisms follow from Lemma~\ref{lemmaAlgRHCompat}(iii) and the third one from Lemma~\ref{lemmaAlgRHCompat}(iv), using that $j_+(\bullet)=\cO_{\dA^1}(*0)\otimes_{\cO_{\dA^1}} j_+(\bullet)$. Since $\SolE{(\dG_m)_\infty}(\cH(0,\alpha;\beta))$ has a $K$-structure by Theorem~\ref{theo:MainTheo}, it follows from Lemma~\ref{lemmaExtensionCompatibility} that $\SolE{\dA^1_\infty}(j_\dagger\cH(0,\alpha;\beta))$ has a $K$-structure.

Finally, let us study the left-hand side of \eqref{eq:canonicalMorphA1}:
Since
$$
j_+\cH(0,\alpha;\beta)=j_+j^+ j_\dagger\cH(0,\alpha;\beta)=(j_\dagger\cH(0,\alpha;\beta))(*0),
$$
we obtain from \cite[Corollary 9.4.11]{DK16} that
$$\SolE{\dA^1_\infty}(j_+\cH(0,\alpha;\beta))\cong \pi^{-1}\dC_{\dG_m}\otimes \SolE{\dA^1_\infty}(j_\dagger\cH(0,\alpha;\beta)),$$
so this object also has a $K$-structure. We can now see that the canonical morphism \eqref{eq:canonicalMorphA1} is induced by the canonical inclusion of sheaves $j_!\dC_{\dG_m}\to\dC_{\dA^1}$, and hence it is compatible with the $K$-structure. This completes the proof.
\end{proof}

After the previous discussion, the only case left is when some of the numbers $\beta_j$ equals zero.
This is treated in the following Theorem, which also summarizes our results.
Notice that the assumption $n\geq m$ that we still need to make is not very restrictive: If $n<m$,
then one replaces the given system $\cH(\alpha;\beta)$ by the system
$$
\cH(-\beta_1,\ldots,-\beta_m;-\alpha_1,\ldots,-\alpha_n)=\widetilde{\kappa}^+\text{inv}_+\cH(\alpha;\beta),
$$
where $\widetilde{\kappa}:\Gm\rightarrow \Gm$, $q\mapsto (-1)^{n-m}\cdot q$,
and formulates the hypothesis for this one.
\begin{theorem}\label{theo:MainTheo2}
Let $n\geq m$, and let $\alpha_1,\ldots,\alpha_n,\beta_1,\ldots,\beta_m\in[0,1)$ be given.
Let $r\in \dN_0$ such that
$\beta_1=\ldots=\beta_r=0$ and $\beta_j\neq 0$ for $j\in\{r+1,\ldots,m\}$. Suppose moreover that $\alpha_i\neq\beta_j$ for all $i,j$. Let $L\subset \dC$ as before (i.e.\ containing all of the numbers $e^{2\pi i\alpha_i}$ and $e^{2\pi i \beta_j}$).
Let $K\subset L$ be finite Galois, with $G=\Gal(L/K)$ such that
\begin{enumerate}
    \item $\gamma:=(\beta_{r+1},\ldots,\beta_m,\alpha_1,\ldots,\alpha_n)\in (0,1)^{N-r}$ is $G$-good, with $k=N-r=n+m-r$, $l=1$.
    \item The function
    \begin{equation}\label{eq:LaurPolMainTheo2}    f:=x_{r+1}+\ldots+x_m+\frac{1}{x_{m+1}}+\ldots+ \frac{1}{x_N}+q\cdot x_{r+1}\cdot\ldots\cdot x_N\in \cO_\dG =\cO_{\dG_m^{N-r}}\boxtimes\cO_{\Gm}
    \end{equation}
    lies in $\cO_\dG^{\im(\rho)}$ where again $\rho$ is the action of the group $G$ from Definition \ref{def:Ggood}. Notice that we use $x_{r+1},\ldots,x_N,q$ as coordinates on $\dG$ here.
\end{enumerate}
Then there exists $H_K\in \EbRcI{K}{(\Gm)_\infty}$ such that $\SolE{(\Gm)_\infty}(\cH(\alpha;\beta))\cong \pi^{-1}\dC_{\Gm} \otimes_{\pi^{-1}K_{\Gm}} H_K$. In particular, the enhanced ind-sheaf $\SolE{(\Gm)_\infty}\big(\cH(\alpha;\beta)\big)$ has a $K$-structure.
\end{theorem}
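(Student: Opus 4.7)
The plan is to argue by induction on $r$, the number of zero entries in $\beta$. The base case $r=0$ is exactly Corollary~\ref{cor:CaseNonZero}: there are no zeros in $\beta$, so after the (trivial) relabelling $x_i\mapsto x_{i+r}$ the Laurent polynomial~\eqref{eq:LaurPolMainTheo2} coincides with the one of Corollary~\ref{cor:CaseNonZero}, and conditions~(1)--(2) of Theorem~\ref{theo:MainTheo2} reduce to its hypotheses applied to $\cH(\alpha_1,\ldots,\alpha_n;\beta_1,\ldots,\beta_m)$. For the inductive step, assume the statement whenever there are at most $k$ zero $\beta_j$'s, and consider $\cH(\alpha;\beta)$ with exactly $k+1$ zeros. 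Renumbering so that $\beta_1=0$ and writing $\beta''=(\beta_2,\ldots,\beta_m)$, the hypotheses of Theorem~\ref{theo:MainTheo2} transfer to $\cH(\alpha;\beta'')$ (with the same $\gamma$ and, up to relabelling, the same $f$). Thus the inductive hypothesis provides a $K$-structure on $\SolE{(\Gm)_\infty}(\cH(\alpha;\beta''))$, and one needs to transport it to $\SolE{(\Gm)_\infty}(\cH(\alpha;0,\beta''))=\SolE{(\Gm)_\infty}(\cH(\alpha;\beta))$.

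The transport comes from a $\cD_{\Gm}$-module isomorphism obtained by combining Katz's formula
$$
\cH(0,A;B)\cong j^+\FL\big(j_+\mathrm{inv}_+\cH(A;B)\big)
$$
(Proposition~5.3.3 of \cite{Ka}, already used in the proof of Corollary~\ref{cor:CaseNonZero}) with the symmetry $\cH(-B;-A)\cong(\kappa^{|A|-|B|+1})^+\mathrm{inv}_+\cH(A;B)$ displayed in the paragraph preceding Theorem~\ref{theo:MainTheo2}. Using the symmetry to convert $\cH(\alpha;0,\beta'')$ into $\cH(0,-\beta'';-\alpha)$, invoking Katz's formula there, and applying the symmetry once more to return to parameters $(\alpha;\bullet)$, one obtains
$$
\cH(\alpha;0,\beta'')\cong\Psi\big(\cH(\alpha;\beta'')\big),
$$
where $\Psi$ is an explicit composition of the $\cD$-module functors $\mathrm{inv}_+$, $\kappa^+$, $j_+$, $j^+$ and $\FL$. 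Each of them corresponds under $\SolE{(\cdot)_\infty}$ to a topological operation preserving $K$-structures: by Lemma~\ref{lemmaAlgRHCompat}, $\mathrm{inv}_+$, $\kappa^+$ and $j^+$ translate to $\EE\mathrm{inv}_\infty^{-1}$, $\EE\kappa_\infty^{-1}$ and $\EE j_\infty^{-1}$, while $j_+$ translates to $\EE j_{\infty!!}$; each of these is compatible with extension of scalars via Lemma~\ref{lemmaExtensionCompatibility}. For $\FL$ one uses the topological Fourier description $\EE q_{\infty!!}(\dE^{\pm xy}\conv\EE p_\infty^{-1}(-))$ recalled in the proof of Corollary~\ref{cor:CaseNonZero}, again built from operations preserving $K$-structures. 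Consequently $K$-structure propagates through $\Psi$, which closes the induction after $r$ iterations.

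The principal technical point is the bookkeeping required to write down $\cH(\alpha;0,\beta'')\cong\Psi(\cH(\alpha;\beta''))$ with correct tracking of $\kappa$-shifts and dimension conventions at each application of the symmetry. A pleasant simplification compared with Corollary~\ref{cor:CaseNonZero} is that one works here in the \emph{forward} direction (adding a zero rather than removing one): the $j_+$-step appears as a genuine direct image and translates cleanly to $\EE j_{\infty!!}$, so no gluing argument on an open cover of $\dP^{1,\mathrm{an}}$ and no exact-sequence device of the type~\eqref{eq:SplitQ} is needed. Iterating the inductive step $r$ times then yields the desired $K$-structure on $\SolE{(\Gm)_\infty}(\cH(\alpha;\beta))$.
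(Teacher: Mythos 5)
Your proposal is correct and follows essentially the same route as the paper: induction on the number of zero lower parameters, base case Corollary~\ref{cor:CaseNonZero}, and an inductive step that transports the $K$-structure through the enhanced-solution counterparts of $\kappa^+$, $\mathrm{inv}_+$, $j^+$, $j_+$ and $\FL$, using Lemmas~\ref{lemmaAlgRHCompat} and~\ref{lemmaExtensionCompatibility}, with no gluing needed since the formula runs in the forward direction. The only (harmless) difference is that you re-derive the required identity from Katz's first formula together with the inversion symmetry, whereas the paper simply invokes the second formula of \cite[Proposition 5.3.3]{Ka}, which packages exactly that bookkeeping.
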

\begin{proof}
If $r=0$, then the result follows directly from the previous Corollary \ref{cor:CaseNonZero}.
Otherwise, we will proceed by induction on $r$. Let us write
$$
\cH':=\cH(\alpha_1,\ldots,\alpha_n;\underbrace{0,\ldots,0}_{r-1\; \textup{times}},\beta_{r+1},\ldots,\beta_m)
$$
We can assume that $\SolE{(\Gm)_\infty}(\cH')$ is defined over $K$: for $r=1$, this follows again from Corollary \ref{cor:CaseNonZero}, using our assumptions on $\gamma$ and on $f$.
If $r>1$ this is precisely the induction hypothesis.
Now we use the second formula in \cite[Proposition 5.3.3]{Ka}, which states that
$$
\cH(\alpha;\beta)= \kappa^+\text{inv}_+j^+\FL(j_+\cH').
$$
Similarly to the proof of Corollary~\ref{cor:CaseNonZero}, the functors involved in this formula correspond, via $\SolE{}(\bullet)$ to the functors $\mathrm{E}\kappa_\infty^{-1}$, $\mathrm{E}\,\mathrm{inv}_{\infty !!}$, $\mathrm{E}j_\infty^{-1}$, $\mathrm{E}p_{2\infty !!}(\dE^{qw}\conv \mathrm{E}p_{1\infty}^{-1}(\bullet))[1]$ and
$\mathrm{E}j_{\infty !!}$. They all preserve $K$-structures by
Lemma~\ref{lemmaExtensionCompatibility}. Since, as just explained,
$\SolE{(\Gm)_\infty}(\cH')$ is defined over $K$, we therefore obtain that also
$\SolE{(\Gm)_\infty}\left(\cH(\alpha;\beta)\right)$ has a $K$-structure, which is what we had to prove.
\end{proof}
\label{page:proofMainTheoIntro}
We obtain the version of this theorem stated in the introduction (Theorem~\ref{theo:MainTheoIntro}) as a simply consequence.
\begin{proof}[Proof of Theorem \ref{theo:MainTheoIntro}]
Under the hypotheses of Theorem \ref{theo:MainTheoIntro}, let $r\in\{1,\ldots,m\}$
be such that $\beta_1=\ldots=\beta_r=0$ and $\beta_j\neq 0$ for $j\in\{r+1,\ldots,m\}$.
It is assumed by the hypotheses of the theorem that $G:=\Gal(L/K)$ induces actions on
$\{e^{2\pi i\alpha_1},\ldots,e^{2\pi i\alpha_n}\}$ and on $\{e^{2\pi i\beta_1},\ldots,e^{2\pi i\beta_m}\}$, but since $e^{2\pi i \beta_j}=1$ for $1\leq j \leq r$, and this value is fixed by any element of $G$, it reduces to an action of $G$ on the set
$e^{2\pi i \alpha_1},\ldots, e^{2\pi i \alpha_n}$ and on the set
$e^{2\pi i \beta_{r+1}},\ldots, e^{2\pi i \beta_m}$. This action can be looked at as a group homomorphism $G\rightarrow S_{m-r}\times S_n$ (where $S_k$ is the symmetric group on $k$ elements), and therefore yields a natural action $\rho$ of $G$ on $\dG_m^{N-r}\times \Gm$ by permutation of the first $m-r$ and the next $n$ coordinates (and by leaving invariant the last coordinate). Unwinding Definition \ref{def:Ggood}, this means exactly that the vector $\gamma=(\beta_{r+1},\ldots,\beta_m,\alpha_1,\ldots,\alpha_n)$ is $G$-good. More precisely, the eigenvalues $e^{2\pi i \beta_s}$ and $e^{2\pi i \beta_t}$ of the monodromy operator of $\Sol{\dG^{N-r}\times \Gm}(\cO_{\dG^{N-r}\times \Gm}^\gamma)$ corresponding to a loop around the divisors $x_s=0$ and $x_t=0$ (for $t,s\in\{r+1,\ldots,m\}$) are exchanged by both the action of $\rho$ and by $L$-conjugation (and similarly for the monodromy eigenvalues $e^{2\pi i \alpha_s}$ and $e^{2\pi i \alpha_t}$ for $s,t\in\{1,\ldots,n\}$ corresponding to a loop around $x_{m+s}$ and $x_{m+t}$), and therefore the isomorphisms in formula
\eqref{eq:ConjugationCond} as well as the compatibilities in formula \eqref{eq:Cocycle} hold true.

Moreover, the function $f$ from equation \eqref{eq:LaurPolMainTheo2} lies in $\cO_{\dG}^{\im(\rho)}$, since $\rho$ acts via permutation of the first $m-r$ and the of
the last $n$ coordinates individually.
Then the result follows from the previous Theorem \ref{theo:MainTheo2}.
\end{proof}
For an irregular holonomic $\cD$-module, the perverse sheaf of  solutions is not a primary object of study. Nevertheless, it is worth mentioning that in the situation just studied, this object carries a $K$-structure as well.
\begin{corollary}\label{cor:Sol}
Under the assumptions of Theorem~\ref{theo:MainTheo2} the perverse sheaf of solutions $\Sol{\Gm}(\mathcal{H}(\alpha;\beta))[1]$ has a $K$-structure.
\end{corollary}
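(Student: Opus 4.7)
The strategy is to transfer the $K$-structure on the enhanced solutions obtained in Theorem~\ref{theo:MainTheo2} to a $K$-structure on the ordinary perverse sheaf of solutions by applying the sheafification functor. Concretely, by Theorem~\ref{theo:MainTheo2}, there exists $H_K\in\EbI{K}{(\Gm)_\infty}$ such that
\[
\SolE{(\Gm)_\infty}\bigl(\cH(\alpha;\beta)\bigr)\cong \pi^{-1}\dC_{\Gm}\otimes_{\pi^{-1}K_{\Gm}} H_K .
\]
Applying $\mathsf{sh}_{(\Gm)_\infty}$ and invoking Corollary~\ref{corSheafificationExtension} together with the identity $\mathsf{sh}_{X_\infty}\circ\SolE{X_\infty}\cong \Sol{X}$ recalled in Section~\ref{sectionRH}, we obtain
\[
\Sol{\Gm}\bigl(\cH(\alpha;\beta)\bigr)\cong \mathsf{sh}_{(\Gm)_\infty}\bigl(\SolE{(\Gm)_\infty}(\cH(\alpha;\beta))\bigr)\cong \dC_{\Gm}\otimes_{K_{\Gm}}\mathsf{sh}_{(\Gm)_\infty}(H_K),
\]
so $\Sol{\Gm}(\cH(\alpha;\beta))$, and hence its shift $\Sol{\Gm}(\cH(\alpha;\beta))[1]$, is defined over $K$ in the sense of Definition~\ref{def:KStructure} (the shift is clearly compatible with extension of scalars).

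The only point requiring care is the applicability of Corollary~\ref{corSheafificationExtension}, which requires $\dR$-constructibility of $H_K$. One has to check that the object $H_K$ constructed in the proofs of Corollary~\ref{cor:Kstruct-directimage} and Theorem~\ref{theo:MainTheo2} actually lives in $\EbRcI{K}{(\Gm)_\infty}$. This follows by inspection of the construction: in Proposition~\ref{prop:KStructure}, $H_K$ is defined as the image under the quotient functor $Q$ of an object $F_K\in\mathrm{I}_{\rc}(K_{\widehat{X}\times\sP})$, which is ind-$\dR$-constructible by construction; and the functors $\mathrm{E}j_\infty^{-1}$, $\mathrm{E}\,\mathrm{inv}_{\infty!!}$, $\mathrm{E}q_{\infty!!}$, $\mathrm{E}p_{\infty}^{-1}$ and convolution with $\dE^{xy}$ used in the gluing/Fourier step of Corollary~\ref{cor:CaseNonZero} and in the inductive argument of Theorem~\ref{theo:MainTheo2} all preserve $\dR$-constructibility of enhanced ind-sheaves, so the $K$-model inherits this property.

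The main (small) obstacle is therefore purely bookkeeping: namely verifying that each of the intermediate operations performed on enhanced ind-sheaves in the course of proving Theorem~\ref{theo:MainTheo2} is compatible both with $\dR$-constructibility and with extension of scalars in the sense of Lemma~\ref{lemmaExtensionCompatibility}. Once this is in place, a single application of Corollary~\ref{corSheafificationExtension} suffices, and the result on the perverse sheaf of solutions follows at once, with no further argument at the level of vanishing cycles or perverse $t$-structures.
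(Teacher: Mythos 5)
Your argument is exactly the paper's: it deduces the corollary by applying $\mathsf{sh}_{(\Gm)_\infty}$ and invoking Corollary~\ref{corSheafificationExtension} together with $\mathsf{sh}_{X_\infty}\circ\SolE{X_\infty}\cong\Sol{X}$. The additional bookkeeping you flag about $\dR$-constructibility of the $K$-model is a reasonable refinement of a point the paper leaves implicit, but it does not change the route.
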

\begin{proof}
Since $\Sol{\Gm}(\mathcal{H}(\alpha;\beta))\cong \mathsf{sh}_{(\Gm)_\infty}\SolE{(\Gm)_\infty}(\mathcal{H}(\alpha;\beta))$, this follows directly from Corollary~\ref{corSheafificationExtension}.
\end{proof}

In particular, we get the following result in the regular case.
\begin{corollary}\label{cor:LocSystem}
Assume that $n=m$ and the hypotheses of Theorem~\ref{theo:MainTheo2} are satisfied. Then the $\dC$-local system $\Sol{\Gm\setminus\{1\}}(\mathcal{H}(\alpha;\beta)|_{\Gm\setminus\{1\}})$ is the complexification of a $K$-local system.
\end{corollary}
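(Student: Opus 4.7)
The plan is to deduce this corollary from the already-established $K$-structure on the perverse sheaf of solutions (Corollary~\ref{cor:Sol}) by restricting to the complement of the non-trivial singular point and checking that the restricted object is, up to shift, a local system.

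First I would recall that when $n=m$ the hypergeometric operator $P$ is regular singular on $\dP^1$ with singularities only at $q=0,1,\infty$, so that $\cH(\alpha;\beta)|_{\Gm\setminus\{1\}}$ is an integrable connection on the smooth variety $\Gm\setminus\{1\}$. Consequently $\cS\vcentcolon=\Sol{\Gm\setminus\{1\}}\bigl(\cH(\alpha;\beta)|_{\Gm\setminus\{1\}}\bigr)$ is a $\dC$-local system concentrated in degree $0$, and $\cS[1]$ is a perverse sheaf on $\Gm\setminus\{1\}$.

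Next, Corollary~\ref{cor:Sol} provides a perverse sheaf $\cF^\bullet\in\Db{K}{\Gm}$ together with an isomorphism $\dC_{\Gm}\otimes_{K_{\Gm}}\cF^\bullet\cong\Sol{\Gm}(\cH(\alpha;\beta))[1]$. Writing $i\colon\Gm\setminus\{1\}\hookrightarrow\Gm$ for the open embedding, the inverse image $i^{-1}$ commutes with extension of scalars (this is the sheaf analogue of Lemma~\ref{lemmaExtensionCompatibility}, immediate from the fact that both operations are computed stalkwise). Hence
\begin{equation*}
\dC_{\Gm\setminus\{1\}}\otimes_{K_{\Gm\setminus\{1\}}}i^{-1}\cF^\bullet\cong i^{-1}\Sol{\Gm}(\cH(\alpha;\beta))[1]\cong \cS[1].
\end{equation*}
Thus $i^{-1}\cF^\bullet\in\Db{K}{\Gm\setminus\{1\}}$ is a $K$-structure on $\cS[1]$.

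It remains to upgrade this $K$-structure to a $K$-local system. The functor $\dC_{(-)}\otimes_{K_{(-)}}(-)$ on sheaves of $K$-vector spaces is exact and faithful (extension of scalars along a field extension). Exactness plus the fact that $\cS[1]$ is concentrated in degree $-1$ implies that $\cH^j(i^{-1}\cF^\bullet)=0$ for $j\neq -1$, so $i^{-1}\cF^\bullet\cong \cL_K[1]$ for a single sheaf $\cL_K$ of $K$-vector spaces on $\Gm\setminus\{1\}$ satisfying $\dC_{\Gm\setminus\{1\}}\otimes_{K_{\Gm\setminus\{1\}}}\cL_K\cong\cS$. Finally, the property of being locally constant of given finite rank can be detected on stalks and on restriction maps, and both are faithfully preserved by extension of scalars along $K\hookrightarrow\dC$; therefore $\cL_K$ is itself a $K$-local system, and the last displayed isomorphism yields the desired complexification. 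The only mildly delicate point is the last one—checking that local constancy descends through faithfully exact extension of scalars—but this follows directly from faithful exactness applied to the adjunction/restriction maps that characterize locally constant sheaves.
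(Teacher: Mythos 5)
Your proof is correct and follows essentially the same route as the paper's: restrict the $K$-structure of Corollary~\ref{cor:Sol} along the open embedding of $\Gm\setminus\{1\}$, where $\cH(\alpha;\beta)$ is nonsingular, using compatibility of inverse image with extension of scalars, and then note that a $K$-structure on a shifted local system is a shifted $K$-local system. Only your justification of the final descent step is loosely phrased (local constancy is not a stalkwise property, and for an infinite extension sections over opens need not commute with $\dC_X\otimes_{K_X}(-)$, so ``restriction maps are preserved'' is not literally available); the clean argument is to lift a $K$-basis of the stalk $(\cL_K)_x$ to sections over a small connected open set on which the complexification is constant and check, using that extension of scalars is exact, faithful and computed stalkwise, that the induced morphism from the constant $K$-sheaf is a stalkwise isomorphism.
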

\begin{proof}
It follows directly from Corollary~\ref{cor:Sol} and Lemma~\ref{lemmaExtensionCompatibility} that $\Sol{\Gm}(\mathcal{H}(\alpha;\beta))|_{\Gm\setminus\{1\}}$ has a $K$-structure. Moreover, this object is a local system, since $\cH(\alpha;\beta)|_{\Gm\setminus\{1\}}$ is an integrable connection (its singularities of a hypergeometric system with $m=n$ are at $0$, $1$ and $\infty$). Hence, it is the complexification of a local system over $K$, which follows as in the proof of Lemma~\ref{lemmaGstructureSheaves}.
\end{proof}

\section{Applications}

In this section, we will discuss a few interesting cases in which Theorem \ref{theo:MainTheo2} can be applied.
The first one concerns real structures and is inspired by
\cite[Theorem 2]{Fedorov}, which we will reprove afterwards as a simple corollary.
\begin{theorem}\label{theo:RealStruct}
Let $n\geq m$, consider numbers $\alpha_1,\ldots,\alpha_n,\beta_1,\ldots,\beta_m
\in[0,1)$, with $\alpha_i\neq \beta_j$, and let $s\in\{0,\ldots,n\}$ and $r\in\{0,\ldots,m\}$ such that
\begin{enumerate}
\item $0=\alpha_1=\ldots=\alpha_s<\alpha_{s+1}\leq\ldots\leq \alpha_n<1$,
\item $0=\beta_1=\ldots=\beta_r<\beta_{r+1}\leq\ldots\leq\beta_m<1$,
\item $\alpha_{s+i}+\alpha_{n+1-i}=1$ for all $i\in\{1,\ldots,n-s\}$ and
\item $\beta_{r+j}+\beta_{m+1-j}=1$ for
all $j\in\{1,\ldots,m-r\}$.
\end{enumerate}
(obviously, since $\alpha_i\neq \beta_j$, at most one of the numbers $r$ and $s$ can be positive). Then $\SolE{(\Gm)_\infty}(\cH(\alpha;\beta))$ has an $\dR$-structure in the sense of Definition \ref{def:KStructure}.
\end{theorem}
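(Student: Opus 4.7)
I would apply Theorem~\ref{theo:MainTheo2} with $L=\dC$ and $K=\dR$, so $G=\Gal(\dC/\dR)=\{1,c\}$ with $c$ complex conjugation. The task is to exhibit an action $\varrho$ of $G$ on $\dG_m^{N-r}$ under which the Laurent polynomial $f$ of \eqref{eq:LaurPolMainTheo2} is invariant and under which the exponent vector $\gamma=(\beta_{r+1},\ldots,\beta_m,\alpha_1,\ldots,\alpha_n)$ becomes $G$-good.

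I would take $\varrho_c$ to be the involution of $\dG_m^{N-r}$ (extended trivially on the $\Gm$-factor) that swaps $x_{r+j}\leftrightarrow x_{m+1-j}$ for $1\leq j\leq m-r$ inside the ``$\beta$-block'' $\{x_{r+1},\ldots,x_m\}$, swaps $x_{m+s+i}\leftrightarrow x_{m+n+1-i}$ for $1\leq i\leq n-s$ inside the ``$\alpha$-block'' $\{x_{m+1},\ldots,x_N\}$, and fixes $x_{m+1},\ldots,x_{m+s}$. Invariance of $f$ is then immediate, since each of the summands $\sum_{i=r+1}^{m}x_i$, $\sum_{i=m+1}^{N}\tfrac{1}{x_i}$ and $q\cdot x_{r+1}\cdots x_N$ is symmetric across each block. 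For $G$-goodness, the rank-one local system $\cF^\gamma$ has monodromy $e^{2\pi i\gamma_k}$ around the axis $x_{r+k}$; its $c$-conjugate $\overline{\cF^\gamma}^c$ carries the inverse monodromies $e^{-2\pi i\gamma_k}$, while $\varrho_{c!}\cF^\gamma$ carries those of $\cF^\gamma$ reindexed by $\varrho_c$. The hypotheses $\alpha_{s+i}+\alpha_{n+1-i}=1$, $\beta_{r+j}+\beta_{m+1-j}=1$, together with $\alpha_1=\cdots=\alpha_s=0$, align these two collections of monodromies exactly, producing an isomorphism $\psi_c\colon \varrho_{c!}\cF^\gamma\xrightarrow{\sim}\overline{\cF^\gamma}^c$, unique up to an element of $\dC^*$.

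The main obstacle is the cocycle relation \eqref{eq:Cocycle} for $g=h=c$, namely $\overline{\psi_c}^c\circ\varrho_{c!}(\psi_c)=\id_{\cF^\gamma}$. The left-hand side is an automorphism of a rank-one local system on the connected space $\dG_m^{N-r}\times\Gm$, hence scalar multiplication by some $\mu\in\dC^*$, and rescaling $\psi_c\mapsto\lambda\psi_c$ changes $\mu$ to $|\lambda|^2\mu$. The cocycle can therefore be normalized to the identity if and only if $\mu\in\dR_{>0}$. To establish this positivity, I would evaluate $\mu$ at a $\varrho_c$-fixed point -- for instance any point of the nonempty diagonal locus $\{x_{r+j}=x_{m+1-j},\;x_{m+s+i}=x_{m+n+1-i}\}$ -- where the cocycle reduces to $(\tilde\psi_c)^2=|a|^2\cdot\id$ on the one-dimensional fiber, $\tilde\psi_c$ being the antilinear self-map associated to $\psi_c$ and $a\in\dC^*$ its scalar in a trivialization. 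Rescaling $\psi_c$ by $\mu^{-1/2}$ then furnishes the $G$-goodness, and Theorem~\ref{theo:MainTheo2} yields the desired $\dR$-structure on $\SolE{(\Gm)_\infty}\bigl(\cH(\alpha;\beta)\bigr)$.
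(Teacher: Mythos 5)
Your proposal is correct and follows essentially the same route as the paper: take $L=\dC$, $K=\dR$, let the nontrivial element of $\Gal(\dC/\dR)$ act by swapping the paired coordinates in the $\beta$- and $\alpha$-blocks (fixing those with exponent $0$), observe that conditions 3.\ and 4.\ make the permuted monodromies of $\cF^\gamma$ match those of the conjugate local system while $f$ is manifestly invariant, and then invoke Theorem \ref{theo:MainTheo2}. Your additional normalization of $\psi_c$ so that the cocycle condition \eqref{eq:Cocycle} holds (showing the scalar $\overline{\psi_c}^c\circ\varrho_{c!}\psi_c$ is a positive real by evaluating at a $\varrho_c$-fixed point and then rescaling) is a valid and welcome piece of care that the paper's proof leaves implicit.
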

\begin{proof}
We put
$\gamma:=(\beta_{r+1},\ldots,\beta_m,\alpha_1,\ldots,\alpha_n) \in [0,1)^{n+m-r}$.
Take $L$ to be equal to $\dC$ and $K=\dR$, so that
$G=\Gal(\dC/\dR)=\dZ/2\dZ$. We claim that with these choices, $\gamma$ is $G$-good. Namely,  consider the action $\varrho:G  \longrightarrow  \Aut(\dG)$ (where $\dG:=\dG_m^{n+m-r}\times\Gm
=\Spec[x_{r+1}^\pm,\ldots,x_{m+n}^\pm,q^\pm]$)
such that $\varrho_{[1]}(x_{r+j})=x_{m+1-j}$ and $\varrho_{[1]}(x_{m+s+i})=x_{n+m-i}$ (it is readily checked that $\varrho_{[1]}$ is an involution, thus defining an action of $G$). Then assumptions 3.\ and 4.\ imply the condition in equation \eqref{eq:ConjugationCond} for $g=[1]\in \dZ/2\dZ$, notice that
in this case, the $g$-conjugate of $\cF^\gamma$ is simply the ordinary conjugate $\overline{\cF^\gamma}$. Hence $\gamma$ is $G$-good. Moreover, it is clear that the Laurent polynomial
$$
f=x_{r+1}+\ldots+x_m+\frac{1}{x_{m+1}}+\ldots+ \frac{1}{x_{m+n}}+q\cdot x_{r+1}\cdot\ldots\cdot x_{m+n}
$$
is invariant under $G$ (more precisely, $f\in \cO_\dG^{\im(\varrho)}$) since $G$ acts simply by exchanging pairs of the first $m-r$ and the last $n$ coordinates. Hence we can apply Theorem \ref{theo:MainTheo2}, which tells us that
$\SolE{\Gm}(\cH(\alpha;\beta))$ has an $\dR$-structure, i.e.\ is obtained via extension of scalars from an enhanced ind-sheaf defined over $\dR$.
\end{proof}

As a consequence of Corollary~\ref{cor:LocSystem}, we can now easily get back (the Betti structure part of) Fedorov's result \cite[Theorem 2]{Fedorov} here.
\begin{corollary}
Let numbers $\alpha_1,\ldots,\alpha_n$ and $\beta_1,\ldots,\beta_n$
in $[0,1)$ be given and assume that they satisfy the assumptions of the previous theorem. Then the local system on $\dP^1\setminus\{0,1,\infty\}$ associated to the corresponding hypergeometric equation via the Riemann--Hilbert correspondence is the complexification of a local system of real vector spaces.
\end{corollary}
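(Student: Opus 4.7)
The plan is to derive this essentially for free from Theorem \ref{theo:RealStruct} combined with Corollary \ref{cor:LocSystem}. The hypotheses stated (with $n=m$) are precisely those of Theorem \ref{theo:RealStruct}, which, taking $L=\dC$, $K=\dR$ and $G=\Gal(\dC/\dR)=\dZ/2\dZ$, verifies in the course of its proof that all conditions of Theorem \ref{theo:MainTheo2} hold: the vector $\gamma$ built from the exponents is $G$-good via the involution $\varrho_{[1]}$ exchanging the symmetric pairs around $\tfrac12$, and the corresponding Laurent polynomial $f$ lies in $\cO_{\dG}^{\im(\varrho)}$.

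Consequently, Corollary \ref{cor:LocSystem} applies directly (the assumption $n=m$ puts us in the regular case) and yields that the $\dC$-local system $\Sol{\Gm\setminus\{1\}}\bigl(\cH(\alpha;\beta)|_{\Gm\setminus\{1\}}\bigr)$ is the complexification of an $\dR$-local system. To match this with the statement, I would identify $\Gm^{\mathrm{an}}=\dC^{\ast}=\dP^1\setminus\{0,\infty\}$, so that $\Gm^{\mathrm{an}}\setminus\{1\}=\dP^1\setminus\{0,1,\infty\}$; via the classical Riemann--Hilbert correspondence, the sheaf $\Sol{\Gm\setminus\{1\}}\bigl(\cH(\alpha;\beta)|_{\Gm\setminus\{1\}}\bigr)$ is the local system of horizontal sections of the flat connection underlying $\cH(\alpha;\beta)$ on this open subset, which is precisely the local system of the hypergeometric equation referred to in the statement.

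I do not anticipate any substantive obstacle, as the real content lies in Theorems \ref{theo:MainTheo2} and \ref{theo:RealStruct}, together with the geometric realization of hypergeometric modules from Proposition \ref{prop:HypGMSystem}. The only routine points to verify are that, for $n=m$, the module $\cH(\alpha;\beta)$ is regular holonomic with singularities precisely at $\{0,1,\infty\}\subset\dP^1$, so that its analytic solution sheaf on $\dP^1\setminus\{0,1,\infty\}$ is indeed a local system in the classical sense, and that extension of scalars from $\dR$ to $\dC$ at the level of sheaves is identified with the usual complexification of local systems of real vector spaces.
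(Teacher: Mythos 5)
Your proposal is correct and follows exactly the route the paper intends: the corollary is stated as a consequence of Theorem \ref{theo:RealStruct} (which verifies the hypotheses of Theorem \ref{theo:MainTheo2} for $L=\dC$, $K=\dR$) combined with Corollary \ref{cor:LocSystem} in the regular case $n=m$, with the identification $\Gm\setminus\{1\}\cong\dP^1\setminus\{0,1,\infty\}$. Nothing further is needed.
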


Next we consider the case when all $\alpha_i,\beta_j$ are rational. Then the field $L$ from above can be chosen to be cyclotomic, more precisely,
let $c\in \dZ\backslash \{0\}$ such that $c\alpha_i,c\beta_j\in \dZ$, and put $L:=\dQ(\zeta)$, where $\zeta$ is a primitive $c$-th root of unity, so that
$\SolE{(\Gm)_\infty}(\cH(\alpha;\beta))$ is a priori defined over $L$. Let $H=\Gal(L/\dQ)\cong(\dZ/c\dZ)^*$. For any $g\in H$, and for any $\delta\in [0,1)$ with
$e^{2\pi i\delta}\in L$ we write $\rho_g(\delta)=\widetilde{\delta}\in[0,1)$ if
$$
g.e^{2\pi i \delta}= e^{2\pi i \widetilde{\delta}}.
$$
In this situation, put $M:=\{\beta_j\}_{j=1,\ldots,m}$, $N:=\{\alpha_i\}_{i=1,\ldots,n}$.
\begin{lemma}\label{lem:CycloFields}
Let $G < H$ be a subgroup such that $\rho_g(M)\subset M$ and
$\rho_g(N)\subset N$ for all $g\in G$.
Then $\SolE{(\Gm)_\infty}(\cH(\alpha;\beta))$ has a  $K$-structure in the sense of Definition \ref{def:KStructure}, where $K:=L^G$ is the fixed field of $G$.
\end{lemma}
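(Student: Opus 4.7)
My plan is to observe that this lemma is essentially a direct translation of Theorem~\ref{theo:MainTheoIntro} into the language of cyclotomic fields and subgroups of $H \cong (\dZ/c\dZ)^*$. The strategy is therefore to identify the data and verify the hypotheses of that theorem (or, equivalently, Theorem~\ref{theo:MainTheo2}).

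First, I would invoke elementary Galois theory: since $L = \dQ(\zeta)$ is a finite Galois extension of $\dQ$, and $G < H = \Gal(L/\dQ)$ is a subgroup, the fixed-field construction $K = L^G$ yields a tower $\dQ \subset K \subset L$ in which $L/K$ is finite Galois with $\Gal(L/K) = G$. This establishes the first of the two inputs required by Theorem~\ref{theo:MainTheoIntro}.

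Second, I would unpack the hypothesis of the lemma. The sets $\{e^{2\pi i \alpha_i}\}_{i}$ and $\{e^{2\pi i \beta_j}\}_{j}$ lie in $L$ by construction, and the action of $g \in H$ on $L$ corresponds via the correspondence $\delta \mapsto e^{2\pi i \delta}$ precisely to $\rho_g$. The assumption $\rho_g(M)\subset M$ and $\rho_g(N)\subset N$ for all $g\in G$ (which by finiteness of $M, N$ and the group property of $G$ is equivalent to equality) is then exactly the statement that $\Gal(L/K) = G$ induces actions on $\{e^{2\pi i\alpha_1},\ldots,e^{2\pi i\alpha_n}\}$ and on $\{e^{2\pi i\beta_1},\ldots,e^{2\pi i\beta_m}\}$.

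Since both hypotheses of Theorem~\ref{theo:MainTheoIntro} are now in place, I would conclude by directly applying that theorem to obtain the desired $K$-structure on $\SolE{(\Gm)_\infty}(\cH(\alpha;\beta))$. I do not anticipate any real obstacle: the content of the lemma is the observation that the abstract criterion of Theorem~\ref{theo:MainTheoIntro} is naturally formulated for cyclotomic $L$ in terms of subgroups of $(\dZ/c\dZ)^*$, so the proof is essentially a matter of translating notation.
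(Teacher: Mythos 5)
Your proposal is correct and, in substance, the same as the paper's argument: the paper's own proof verifies directly that the permutation action of $G$ makes $\gamma$ $G$-good and leaves the Laurent polynomial $f$ invariant and then applies Theorem~\ref{theo:MainTheo2}, which is precisely the content of the (already proved, non-circular) Theorem~\ref{theo:MainTheoIntro} that you invoke. Your translation of the hypothesis $\rho_g(M)\subset M$, $\rho_g(N)\subset N$ (automatically equalities) into the statement that $\Gal(L/K)=G$ acts on the sets $\{e^{2\pi i\alpha_i}\}$ and $\{e^{2\pi i\beta_j}\}$, together with $L/K$ finite Galois with group $G$ by Galois theory, is exactly what is needed, so the proof goes through.
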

\begin{proof}
First we remark that the inclusions $\rho_g(M)\subset M$ and
$\rho_g(N)\subset N$  are automatically equalities, and that we obtain an action
$$
\rho: G \longrightarrow S(M)\times S(N)\cong S_m\times S_n.
$$
where we denote by $S(M)$ resp. $S(N)$ the group of permutations of the sets $M$ and $N$, respectively. Notice that if $r\in\{1,\ldots,m\}$ is as before, i.e.
$\beta_j=0$ for $j=1,\ldots,r$ and $\beta_j\neq 0$ for $j>r$, then since necessarily $\rho(0)=0$, this action factors over
$S(\{\beta_{r+1},\ldots,\beta_m\})\times S(N)\cong S_{m-r}\times S_n$.

By construction we have
$$
\overline{\cF^\gamma}^g = \cF^{\rho^{-1}_g(\gamma)}
$$
for each $g\in G$, where $\rho_g(\gamma):=
(\rho_g(\beta_{r+1}),\ldots, \rho_g(\beta_m),
\rho_g(\alpha_1),\ldots,\rho_g(\alpha_n))$.
Again, since $G$ acts on $\dG_m^{m-r+n}$ via symmetry groups in the first $m-r$ and the last $n$ coordinates, we have that the function $$
f=x_{m+r}+\ldots+x_m+\frac{1}{x_{m+1}}+\ldots+\frac{1}{x_{m+n}}+
q\cdot x_{r+1}\cdot\ldots\cdot x_{m+n}
$$
lies in $\cO^{\im(\varrho)}_\dG$. Then the result follows by applying Theorem \ref{theo:MainTheo2}.
\end{proof}

Notice that if the hypotheses of Theorem \ref{theo:RealStruct} are satisfied and if we suppose moreover that $\alpha_i,\beta_j\in \dQ$, then Theorem \ref{theo:RealStruct} follows as a special case from Lemma \ref{lem:CycloFields}, since the fixed field $K$ will automatically be contained in $\dR$, and hence $\SolE{(\Gm)_\infty}(\cH(\alpha;\beta))$
acquires an $\dR$-structure as well.

Finally, if in Lemma \ref{lem:CycloFields} we have $\rho_g(M)\subset M$ and $\rho_g(N)\subset N$ for all $g\in H$, then we automatically get that $\cH(\alpha;\beta)$ is defined over $\dQ$. This condition can actually be rephrased in a nicer way.

\begin{theorem}\label{theo:RatStruct}
Let $s\in\{0,\ldots,n\}$ and $r\in\{0,\ldots,m\}$ be as in Theorem \ref{theo:RealStruct}. Suppose that there exist non-negative integers $e,f$
and positive integers
$r_1,\ldots, r_e$ and $s_1,\ldots,s_f$ such that $n-s=\varphi(r_1)+\ldots+\varphi(r_e)$ and $m-r=\varphi(s_1)+\ldots+\varphi(s_f)$ where  $\varphi$ is Euler's $\varphi$-function.
If now we have
$$
\prod_{i=s+1}^n (q\partial_q-\alpha_i)=\prod_{i'=1}^e \prod_{d\in (\dZ/r_{i'}\dZ)^*} \left(q\partial_q-\frac{d}{r_{i'}}\right)
$$
and
$$
\prod_{j=r+1}^m (q\partial_q-\beta_j)=\prod_{j'=1}^f \prod_{d\in (\dZ/s_{j'}\dZ)^*} \left(q\partial_q-\frac{d}{s_{j'}}\right),
$$
then $\SolE{(\Gm)_\infty}(\cH(\alpha;\beta))$ has a $\dQ$-structure.
\end{theorem}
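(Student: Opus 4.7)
The plan is to reduce directly to Lemma~\ref{lem:CycloFields} with the largest possible group, namely $G=H=\Gal(L/\dQ)$, where $L=\dQ(\zeta)$ is the cyclotomic field containing all of the values $e^{2\pi i\alpha_i}$ and $e^{2\pi i\beta_j}$. Under the hypotheses, it suffices to take $\zeta$ to be a primitive $c$-th root of unity for any common multiple $c$ of $r_1,\dots,r_e,s_1,\dots,s_f$.

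The key step is to verify the combinatorial condition $\rho_g(N)\subset N$ and $\rho_g(M)\subset M$ for every $g\in H$. By the formulas in the statement, the multiset $\{\alpha_{s+1},\dots,\alpha_n\}$ is precisely the disjoint union, over $i'\in\{1,\dots,e\}$, of the sets $\{d/r_{i'} : d\in(\dZ/r_{i'}\dZ)^*\}$, so that
\[
\{e^{2\pi i\alpha_i}\}_{i=s+1}^n = \bigsqcup_{i'=1}^{e}\bigl\{\text{primitive }r_{i'}\text{-th roots of unity}\bigr\}.
\]
Since $\Gal(\dQ(\zeta)/\dQ)\cong(\dZ/c\dZ)^*$ acts on $\mu_{r_{i'}}$ through the natural surjection $(\dZ/c\dZ)^*\sra(\dZ/r_{i'}\dZ)^*$, and this action permutes the primitive $r_{i'}$-th roots of unity, each set of primitive $r_{i'}$-th roots of unity is $H$-stable. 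Hence so is their union, which gives $\rho_g(N)\subset N$ for all $g\in H$. The same argument for the $\beta$'s, using the factors $s_{j'}$, gives $\rho_g(M)\subset M$ (the entries equal to $0$ are trivially fixed by every $g$, so only the nonzero ones need to be checked, and those are handled as above).

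With the condition verified, Lemma~\ref{lem:CycloFields} applied to $G=H$ yields a $K$-structure on $\SolE{(\Gm)_\infty}(\cH(\alpha;\beta))$ where $K=L^H=\dQ$. There is no real obstacle beyond the combinatorial identification above: the whole point of the hypothesis is that the sets of $\alpha$'s and $\beta$'s are unions of complete Galois orbits of primitive roots of unity, which is exactly what permits taking $G$ to be the full Galois group $H$ and therefore $K=\dQ$.
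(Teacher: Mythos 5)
Your proposal is correct and takes essentially the same route as the paper: the paper's proof likewise applies Lemma~\ref{lem:CycloFields} with $G$ equal to the full Galois group $\Gal(L/\dQ)\cong(\dZ/c\dZ)^*$ (with $c$ divisible by $\mathrm{lcm}(r_1,\ldots,r_e,s_1,\ldots,s_f)$), noting that the hypothesis means the nonzero $\alpha$'s and $\beta$'s form complete Galois orbits of primitive roots of unity, so $K=L^G=\dQ$. Your write-up merely makes explicit the "elementary to verify" stability of $M$ and $N$ under the Galois action, which is fine.
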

\begin{proof}
It is elementary to verify that the assumption implies that the group $G$ in Lemma \ref{lem:CycloFields} can be taken to be the full Galois group $\Gal(L/\dQ)\cong (\dZ/c\dZ)^*$ (notice that $c$ is divisible by $\text{lcm}(r_1,\ldots,r_e,s_1,\ldots,s_f)$), and then $K=L^G=\dQ$.
\end{proof}

A special case of this result is worth mentioning, since it is related to various examples of mirror symmetry for toric orbifolds.
\begin{corollary}\label{cor:RatStructuresWeightedProjSp}
Let $m,n$ and $r,s$ be as above, and suppose that there are $p,q\in \dZ_{\geq 0}$ and
$w_1,\ldots, w_p, v_1,\ldots,v_q \in \dZ_{>0}$ such that $n-s=w_1+\ldots+w_p-p$ and $m-r=v_1+\ldots+v_q-q$ and such that
$$
\prod_{i=s+1}^n (q\partial_q-\alpha_i)=\prod_{i'=1}^p \prod_{d=1}^{w_{i'}-1} \left(q\partial_q-\frac{d}{w_{i'}}\right)
$$
and
$$
\prod_{j=r+1}^m (q\partial_q-\beta_j)=\prod_{j'=1}^q \prod_{d=1}^{v_{j'}-1} \left(q\partial_q-\frac{d}{v_{j'}}\right).
$$

Then $\SolE{(\Gm)_\infty}(\cH(\alpha;\beta))$ has a $\dQ$-structure.
\end{corollary}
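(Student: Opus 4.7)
The plan is to deduce this corollary directly from Theorem \ref{theo:RatStruct} by regrouping the given products. The only arithmetic input needed is the classical identity
$$w = \sum_{r \mid w} \varphi(r), \qquad \text{equivalently} \qquad w - 1 = \sum_{\substack{r \mid w \\ r > 1}} \varphi(r),$$
together with the bijection
$$\{1,\ldots,w-1\} \;\longleftrightarrow\; \bigl\{(r,d') : r \mid w,\ r>1,\ d' \in (\dZ/r\dZ)^*\bigr\}, \qquad d \longmapsto \Bigl(\tfrac{w}{\gcd(d,w)},\tfrac{d}{\gcd(d,w)}\Bigr),$$
under which $d/w = d'/r$.

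First I would apply this bijection factor by factor to rewrite the assumed products. For each $i' \in \{1,\ldots,p\}$ one has
$$\prod_{d=1}^{w_{i'}-1}\bigl(t\partial_t - \tfrac{d}{w_{i'}}\bigr) \;=\; \prod_{\substack{r \mid w_{i'}\\ r>1}} \prod_{d' \in (\dZ/r\dZ)^*}\bigl(t\partial_t - \tfrac{d'}{r}\bigr),$$
and analogously for the $v_{j'}$. Let $\{r_1,\ldots,r_e\}$ be the multiset $\bigsqcup_{i'=1}^{p}\{\,r : r\mid w_{i'},\ r>1\,\}$ and, symmetrically, $\{s_1,\ldots,s_f\}$ from the $v_{j'}$. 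Then the hypotheses of Corollary~\ref{cor:RatStructuresWeightedProjSp} become exactly the product identities required by Theorem~\ref{theo:RatStruct}.

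Next I would verify the numerical conditions: using $w_{i'}-1 = \sum_{r \mid w_{i'},\,r>1}\varphi(r)$ and summing over $i'$,
$$n - s \;=\; \sum_{i'=1}^{p}(w_{i'}-1) \;=\; \sum_{i'=1}^{p}\sum_{\substack{r \mid w_{i'}\\ r>1}}\varphi(r) \;=\; \varphi(r_1)+\cdots+\varphi(r_e),$$
and likewise $m - r = \varphi(s_1)+\cdots+\varphi(s_f)$. Thus all hypotheses of Theorem~\ref{theo:RatStruct} are satisfied, and the conclusion on the $\dQ$-structure of $\SolE{(\Gm)_\infty}(\cH(\alpha;\beta))$ follows at once.

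There is essentially no obstacle here: the argument is a purely combinatorial unwinding, and the boundary cases ($w_{i'}=1$ or $p=0$, giving empty products) are handled automatically since then no divisors $>1$ contribute. The substantive content is already encoded in Theorem~\ref{theo:RatStruct}.
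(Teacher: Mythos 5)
Your proposal is correct and follows the same route as the paper: reduce to Theorem \ref{theo:RatStruct} by regrouping each product $\prod_{d=1}^{w_{i'}-1}(t\partial_t-d/w_{i'})$ according to the denominators in lowest terms and checking the counts via $w-1=\sum_{r\mid w,\,r>1}\varphi(r)$. In fact your divisor-based regrouping (taking as $r_1,\ldots,r_e$ the divisors $>1$ of the $w_{i'}$, with multiplicity) is the precise version of what the paper's terser proof gestures at when it speaks of decomposing the $w_i$ and $v_j$ into prime factors, so nothing is missing.
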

\begin{proof}

We have the following identity for any $w\in \dZ_{>0}$
$$
\begin{array}{rcl}
\D \prod_{d=1}^{w-1} \left(q\partial_q -\frac{d}{w}\right)
&=&
\D \prod_{k<w, k|w} \; \prod_{d<w,\textup{gcd}(d,w)=k}  \left(q\partial_q -\frac{d}{w}\right)\\ \\
&=&
\D \prod_{k<w, k|w} \; \prod_{d<w,\textup{gcd}(d,w)=k}  \left(q\partial_q -\frac{d/k}{w/k}\right)\\ \\
&=&
\D \prod_{k<w, k|w} \; \prod_{b<w/k,\textup{gcd}(b,w/k)=1}  \left(q\partial_q -\frac{b}{w/k}\right)
\stackrel{(*)}{=}
\D \prod_{k<w, k|w} \; \prod_{b<k,\textup{gcd}(b,k)=1}  \left(q\partial_q -\frac{b}{k}\right) \\ \\
&=&
\D \prod_{k<w, k|w} \; \prod_{b\in(\dZ/k\dZ)^*}  \left(q\partial_q -\frac{b}{k}\right),
\end{array}
$$
where the equality $(*)$ comes from the fact that
the map from $\{0<k<w\,|\,k|w\}$ to itself sending $k$ to
$w/k$ is a bijection.

By applying this identity to the two monomial operators
$$
\D \prod_{i'=1}^p \prod_{d=1}^{w_{i'}-1} \left(q\partial_q-\frac{d}{w_{i'}}\right)
\quad\quad\textup{and}\quad\quad
\prod_{j'=1}^q \prod_{d=1}^{v_{j'}-1} \left(q\partial_q-\frac{d}{v_{j'}}\right).
$$
and by using the previous theorem, we obtain the desired result.
\end{proof}

\begin{remark}
Suppose that we are given $(\alpha;\beta)\in [0,1)^N$ as before such that the following variant of the assumptions of the previous theorem holds: There exist
$p,q\in \dZ_{\geq 0}$ and
$w_2,\ldots, w_p, v_1,\ldots,v_q\in \dZ_{>0}$ such that $n=w_1+\ldots+w_p$ and $m=v_1+\ldots+v_q$ and such that
$$
\prod_{i=1}^n (q\partial_q-\alpha_i)=\prod_{i'=1}^p \prod_{d=0}^{w_{i'}-1} \left(q\partial_q-\frac{d}{w_{i'}}\right)
$$
(where $w_1=0$, so that necessarily $\alpha_1=0$)
and
$$
\prod_{j=1}^m (q\partial_q-\beta_j)=\prod_{j'=1}^q \prod_{d=0}^{v_{j'}-1} \left(q\partial_q-\frac{d}{v_{j'}}\right).
$$
Notice that here both some of the numbers $\alpha_i$ and some of the numbers $\beta_j$ are equal to zero, so that
the corresponding module $\cH(\alpha;\beta)$ is no longer necessarily irreducible.
We now consider the following Laurent polynomial:
$$
\widetilde{f}:=x_1+\ldots+x_q+\frac{1}{x_{q+1}}+\ldots+\frac{1}{x_{q+p-1}}+q\cdot x_1^{v_1}\cdot\ldots\cdot x_q^{v_q}
\cdot x_{q+1}^{w_2}\cdot\ldots\cdot x_{p+q-1}^{w_p} \in \cO_{\widetilde{\dG}},
$$
where this time $\widetilde{\dG}=\dG_m^{q+p-1}\times\Gm$. Let again
$p:\widetilde{\dG}\twoheadrightarrow \Gm$ be the projection to the last factor, then it can be shown along the lines of Proposition \ref{prop:HypGMSystem} that we have
$$
\kappa^+\cH^0 p_+ \cE^{\widetilde{f}}_{\widetilde{\dG}}  \cong \cH(\alpha;\beta).
$$
This isomorphism is essentially well-known, e.g. the Laurent polynomial $\widetilde{f}$ appears, in the case where $q=0$, as the Landau--Ginzburg model for the quantum cohomology of weighted projective spaces, see \cite{DS2} for a thorough discussion of this example.

As a consequence, one can show directly that this (reducible) module $\cH(\alpha;\beta)$ has a $\dQ$-structure since it is constructed via standard functors from an object (namely $\cE^{\widetilde{f}}_{\widetilde{\dG}}$) which already has a $\dQ$-structure. This is in contrast to the cases discussed before, where we start with the module $\cE^{\gamma,f}$, on which there is no a priori Betti structure.

One can also relate (for the case of parameters $\alpha,\beta$ satisfying the hypotheses of this remark) the two approaches by comparing the Laurent polynomials $f$ and $\widetilde{f}$. This yields an interesting geometric explanation for these two different approaches to the existence of Betti structures. We plan to discuss these issues in a subsequent work.
\end{remark}

\section{Consequences for Stokes matrices}
\label{sec:Stokes}

We apply our results to questions regarding the Stokes matrices for hypergeometric system at the irregular singular point. In Section 9.8 of \cite{DK16}, the authors explain how the Stokes matrices or Stokes multipliers are encoded in the enhanced ind-sheaf of the solutions.

We assume to be in the situation of Theorem \ref{theo:MainTheo2} so that the enhanced solutions carry a $K $-structure
\[
\SolE{(\Gm)_\infty}(\cH(\alpha;\beta))\cong \pi^{-1}\dC_{\Gm} \otimes_{\pi^{-1}K_{\Gm}} H_K
\]
for some $H_K\in \mathrm{E}^\mathrm{b}_{\rc}(\mathrm{I}K_{(\Gm)_\infty})$.

Recall that we considered parameters $\alpha_1, \ldots, \alpha_n, \beta_1, \ldots, \beta_m \in \dC$ with $n>m $. Then $\cH(\alpha;\beta) $ is irregular singular at infinity and if we write $d:=n-m$ it is ramified of degree $d $. Let us denote by $\rho:\Gmu \to \Gm$ the ramification map $\rho:u \mapsto u^d=q$. As usual, we will consider the pull-back of $\cH(\alpha;\beta)$ with respect to $\rho$ and study the Stokes matrices of the resulting enhanced ind-sheaf with the induced $K$-structure (see Lemma \ref{lemmaExtensionCompatibility}(ii)):
\begin{equation}\label{eq:Kstrsol}
\EE \rho^{-1} \SolE{(\Gm)_\infty}(\cH(\alpha;\beta))\cong \pi^{-1}\dC_{\Gmu} \otimes_{\pi^{-1}K_{\Gm}} \EE \rho^{-1} H_K.
\end{equation}
Let us write $\widetilde{H}_K := \EE \rho^{-1} H_K \in \mathrm{E}^\mathrm{b}_{\rc}(\mathrm{I}K_{(\Gmu)_\infty})$ (cf.\ \cite[Proposition 4.9.11]{DK16} for the compatibility of $\dR$-constructibility with pull-backs).

The pull-back $\cH(\alpha;\beta)$ is of slope one and there is a finite set $C_1 \subset \dC^\times$ such that the formal exponential factors of $\cH(\alpha;\beta)$ are the elements of
\[
\{e^0\} \cup \{ e^{cu} \mid c \in C_1 \}
\]
(see \cite{SabStokes} for these notions). Let us write $C:=\{0\} \cup C_1$.

\begin{remark}\label{rem:Katz}
With additional assumptions on the parameters $(\alpha;\beta)$, the exponential factors can be determined rather easily. If the non-resonant parameters satisfy that $d\alpha_j \not \in \dZ$ for all $j $ and that the module is not Kummer induced (see \cite[Kummer Recognition Lemma 3.5.6]{Ka}), a theorem of N. Katz \cite[Theorem 6.2.1]{Ka} relates $\cH(\alpha;\beta)$ with the Fourier-Laplace transform of a regular singular hypergeometric module. Applying the stationary phase formula of C. Sabbah \cite{SaStat}, one then deduces that $C_1$ is given as $C_1=\{ d\cdot \zeta \mid \zeta \in \mu_d \}$, where $\mu_d$ is the group of $d$-th roots of unity.
\end{remark}

We would like to apply results from \cite{MocCurveTest}. There is a natural pre-order for subanalytic functions $f,g $ on a bordered space $(M^\circ,M)$ defined as
\begin{equation}\label{eq:preorder}
f \prec g :\Leftrightarrow f-g \text{ is bounded from above on $U \cap M^\circ$ for any relatively compact subset $U$ of $M $.}
\end{equation}
It induces an equivalence relation by setting $f \sim g :\Leftrightarrow f\prec g $ and $g \prec f$. We will write $[f]$ for the equivalence class of a function.

Let $\Delta$ be a small open neighbourhood of $\infty$ and let $\varpi\colon \widetilde{\Delta}\to \Delta$ be the oriented real blow-up of $\Delta$ at $\infty$. Let us write $\Delta^\circ \vcentcolon= \Delta \smallsetminus \{ \infty \}$ with its inclusion $\iota\colon \Delta^\circ \hookrightarrow \widetilde{\Delta}$, and consider the bordered space $\bDelta \vcentcolon=(\Delta^\circ, \widetilde{\Delta})$.  Since we are interested in the local situation at infinity, we will restrict all sheaves to $\Delta$ (or $\widetilde{\Delta}$). For example, we consider the enhanced ind-sheaf \(\SolE{(\Delta^\circ,\Delta)}((\rho^+\cH(\alpha;\beta))|_{\Delta^\circ})\) instead of the full version on $(\Gmu)_\infty$. Let us remark that we consider $\varpi$ as a morphism of bordered spaces
\(
\varpi\colon \bDelta=(\Delta^\circ,\widetilde{\Delta}) \to (\Delta^\circ,\Delta)
\)
also.

As explained in \cite[Section 9]{DK16}, we know that we can cover $\Delta^\circ$ by sectors $\Sigma_k$ such that we have trivializations of the enhanced solutions of $\rho^+\cH(\alpha;\beta)$ of the form
\[
    \pi^{-1}\dC_{\Sigma_k}\otimes \SolE{(\Delta^\circ,\Delta)}(\rho^+\cH(\alpha;\beta))\cong \pi^{-1}\dC_{\Sigma_k} \otimes \bigoplus_{c\in C} \big(\dE^{\mathrm{Re}(cu)}_{(\Delta^\circ,\Delta),\dC}\big)^{r_c}
\]
with the index set $C$ from above. We write $\dE^{\mathrm{Re}(cu)}_{(\Delta^\circ,\Delta),\dC}$ here for the enhanced ind-sheaf $\dE^{\mathrm{Re}(cu)}_\dC$ defined in \eqref{eq:DefExp} in order to emphasize the bordered space on which it lives. Since
\[
\EE \varpi^{-1} \dE^{\mathrm{Re}(cu)}_{(\Delta^\circ,\Delta),\dC} \cong
\dE^{\mathrm{Re}(cu)}_{\bDelta,\dC} ,
\]
we deduce the trivializations
\begin{equation}\label{eq:sectorK}
    \pi^{-1}\dC_{\Sigma_k}\otimes \mathrm{E} \varpi^{-1}  \SolE{(\Delta^\circ,\Delta)}(\rho^+\cH(\alpha;\beta))\cong \pi^{-1}\dC_{\Sigma_k} \otimes \bigoplus_{c\in C} \big(\dE^{\mathrm{Re}(cu)}_{\bDelta,\dC}\big)^{r_c}.
\end{equation}
We will now work on the bordered space $\bDelta$ and hence omit the subscript by simply writing $\dE^{\mathrm{Re}(cu)}_{\dC}$ again.

It is important to remark that the induced filtration
\begin{equation}\label{eq:enhStokesFiltr}
F_\mathfrak{a} \big(\pi^{-1}\dC_{\Sigma_k}\otimes \mathrm{E} \varpi^{-1} \SolE{(\Delta^\circ,\Delta)}(\rho^+\cH(\alpha;\beta))\big) \cong \pi^{-1}\dC_{\Sigma_k} \otimes \bigoplus_{c\in C: [cu]\prec \mathfrak{a}} \big(\dE^{\mathrm{Re}(cu)}_\dC\big)^{r_c}
\end{equation}
indexed by classes $\mathfrak{a}$ of subanalytic functions is well-defined, i.e.\ does not depend on the choice of the isomorphism in \eqref{eq:sectorK} (cf.\
\cite[Lemma 5.15]{MocCurveTest}).

Since we have pole order at most one at infinity in the exponential factors, the following arguments yield that we can obtain these splittings on two sectors, each of width slightly greater than $\pi$: By classical analysis (cf.\ \cite{balseretal}) this is well-known for asymptotic solutions in two such sectors. Recall the notion of $\cD^\cA$-modules on the real oriented blow-up $\widetilde\Delta$ from \cite[§7.2]{DK16}. The classical result induces a corresponding splitting as $\cD^\cA$-modules and we deduce the existence of a splitting as in \eqref{eq:sectorK} for two sectors of width slightly greater than $\pi$ from \cite[Proposition 3.5]{ItoTakeuchi} (see also \cite[Proposition 3.1]{AHGauss} for details in the one-dimensional case).

Let us choose sectors $S_\pm$ (in $\widetilde{\Delta}$) such that the $\varpi^{-1}(\infty)\cap S_\pm $ cover $\varpi^{-1}(\infty)$, and let
\[
\sigma_+ \cup \sigma_- = S_+ \cap S_-
\]
be the union of the two smaller sectors $\sigma_\pm$, the overlaps of the sectors $S_\pm$. The choice of the sectors $S_\pm$ has some impact on the Stokes matrices one wants to compute -- in principle it amounts to the action of a braid group.

Let us denote by $\cL:=
\big(\mathsf{sh}_{(\Gmu)_\infty}\SolE{(\Gmu)_\infty}(\rho^+\cH(\alpha;\beta))\big)\big|_{\Delta^\circ}$ the local system of solutions on the punctured disc and by $\widetilde{\cL}:=\iota_\ast \cL$ its extension to the boundary. The Stokes filtration on the local system $\widetilde{\cL}|_{\varpi^{-1}(\infty)}$ is the filtration inherited from the filtration in \eqref{eq:enhStokesFiltr} via the sheafification functor. On the sectors $S_\pm$, we have splittings of these filtrations as in \eqref{eq:sectorK}, say
\[
\psi_{\pm}\colon \widetilde{\cL}|_{(\varpi^{-1}(\infty)\cap S_\pm)} \stackrel{\cong}{\to}
\bigoplus_{c\in C} (\cE_{c,\pm})^{r_c}
\]
where $\cE_{c,\pm} $ is the constant rank one local system on the interval $\varpi^{-1}(\infty)\cap S_\pm$ coming from $\iota_*\mathsf{sh}_{\bDelta}\dE^{\mathrm{Re}(cu)}_\dC$. The Stokes matrices are defined to be the matrices representing the transition isomorphims on the overlaps
\[
\cS_+ = \big( \psi_- \circ \psi_+^{-1})|_{\varpi^{-1}(\infty)\cap\sigma_+} \text{ and }
\cS_- = \big( \psi_- \circ \psi_+^{-1})|_{\varpi^{-1}(\infty)\cap\sigma_-}.
\]
(Let us remark that there are different conventions and the Stokes matrices are sometimes also defined as the inverse isomorphisms, which is not important for our purposes. Here, we did not describe the orientation of the sectors and their overlaps explicitly.) In order to prove that one can arrive at Stokes matrices with entries in the subfield $K \subset \dC$, we have to show that the local system $\widetilde{\cL}_{\varpi^{-1}(\infty)}$, its Stokes filtration and splittings can be defined over $K$. We are indebted to T.\ Mochizuki for pointing out the idea how to prove this.

First, note that both sides of \eqref{eq:sectorK} have a $K$-structure,
so that we can write this isomorphism in the form
\begin{equation}\label{eq:tildeHK}
 \pi^{-1}\dC_{\Delta^\circ} \otimes_{\pi^{-1}K_{\Delta^\circ}} \pi^{-1}K_{\Sigma_k} \otimes \EE \varpi^{-1}\widetilde{H}_K \cong
\pi^{-1}\dC_{\Delta^\circ} \otimes_{\pi^{-1}K_{\Delta^\circ}}
\pi^{-1}K_{\Sigma_k} \otimes \bigoplus_{c\in C} \big(\dE^{\mathrm{Re}(cu)}_K\big)^{r_c}.
\end{equation}

Let us denote by $\widetilde{\cL}_K:=\iota_\ast (\mathsf{sh}_{(\Gmu)_\infty}\widetilde{H}_K)|_{\Delta^\circ}$ the associated $K$-structure of $\widetilde{\cL}$.

We know that $\EE\varpi^{-1}\widetilde{H}_K$ is an $\dR$-constructible enhanced ind-sheaf (again by \cite[Proposition 4.9.11]{DK16}) and we deduce from \cite[Lemma 4.9.9]{DK16} that there exists a subanalytic stratification $\widetilde{\Delta}=\bigsqcup_{\lambda \in \Lambda} S_\lambda$ refining $\widetilde{\Delta}=\varpi^{-1}(\infty) \sqcup \Delta^\circ$ such that the following holds: For each stratum $S_\lambda \subset \Delta^\circ$, there exist a finite set of $\dR \cup \{\infty\} $-valued subanalytic functions $f_{\lambda,j}< g_{\lambda,j}$, say for $j=1, \ldots,m $, and isomorphisms
\begin{equation}\label{eq:decompStrat}
\pi^{-1} K_{S_\lambda} \otimes \EE\varpi^{-1}\widetilde{H}_K \cong \pi^{-1} K_{S_\lambda} \otimes \bigoplus_{j=1}^m K^\mathrm{E}_{\bDelta} \conv K_{f_{\lambda,j}\le t < g_{\lambda,j}}
\end{equation}
where (analogously to \eqref{eq:DefExp}) we write
\[
K_{f\le t <g} \vcentcolon= K_{\{ (u,t) \in \widetilde{\Delta} \times \mathsf{P} \mid u \in \Delta^\circ, t \in \dR, f(u)\le t < g(u) \}}.
\]

For each $j $, the pair $(f_{\lambda,j},g_{\lambda,j})$ then is non-equivalent in the sense of \cite[§5.2.2]{MocCurveTest}.

Let us now consider the situation around points on the boundary of the real blow-up: For all but finitely many points $p \in \varpi^{-1}(\infty)$, we find one-dimensional strata $S_\eta \subset \varpi^{-1}(\infty)$ containing $p$ and $S_\lambda \subset \Delta^\circ$ such that their union contains an open neighbourhood $U_p $ of $p $ in $\widetilde{\Delta}$ and such that \eqref{eq:decompStrat} holds over $S_\lambda$ and consequently also over $U_p^\circ=U_p \cap \Delta^\circ$. Let $Z \subset \varpi^{-1}(\infty)$ be the finite set of point where this does not hold, i.e.\ zero-dimensional strata in $\varpi^{-1}(\infty)$ or limit points of (real) one-dimensional strata in $\Delta^\circ$.
Consider a point $p \in \varpi^{-1}(\infty) \smallsetminus Z$ and apply the notations as above. Since
\[
\pi^{-1} \dC_{\Delta^\circ}\otimes_{\pi^{-1} K_{\Delta^\circ}}\pi^{-1}K_{U_p^\circ}\otimes
K^\mathrm{E}_{\bDelta} \conv K_{f_{\lambda,j}\le t < g_{\lambda,j}} \cong \pi^{-1} \dC_{U_p^\circ} \otimes \dC^\mathrm{E}_{\bDelta} \conv \dC_{f_{\lambda,j}\le t < g_{\lambda,j}}
\]
for all $j$, we deduce from \eqref{eq:decompStrat} the isomorphism
\begin{equation}\label{eq:decompStratC}
\pi^{-1} \dC_{\Delta^\circ} \otimes_{\pi^{-1} K_{\Delta^\circ}}\pi^{-1}K_{U_p^\circ} \otimes \EE\varpi^{-1}\widetilde{H}_K \cong \pi^{-1}\dC_{U_p^\circ}\otimes \bigoplus_{j=1}^m \dC^\mathrm{E}_{\bDelta} \conv \dC_{f_{\lambda,j}\le t < g_{\lambda,j}}.
\end{equation}
If $U_p^\circ$ is chosen small enough, it is contained in one of the sectors $\Sigma_k$ from \eqref{eq:tildeHK} and since
\[
\dE^{\mathrm{Re}(cu)}_\dC=\dC^{\mathrm{E}}_{\bDelta} \conv \dC_{-\mathrm{Re}(cu)\le t},
\]
we combine \eqref{eq:sectorK} and \eqref{eq:decompStratC} to obtain the isomorphism
\begin{equation}\label{eq:KMIs}
\pi^{-1}\dC_{U_p^\circ}\otimes\bigoplus_{j=1}^m \dC^\mathrm{E}_{\bDelta} \conv \dC_{f_{\lambda,j}\le t < g_{\lambda,j}} \cong \pi^{-1}\dC_{U_p^\circ}\otimes \bigoplus_{c\in C} \dC^{\mathrm{E}}_{\bDelta} \conv (\dC_{-\mathrm{Re}(cu) \le t})^{r_c}.
\end{equation}

On the basis of \cite{MocCurveTest}\footnote{We refer to the arXiv version of Mochizuki's paper. However, the paper has been reorganized in the meantime and will be published in two parts. The notation $\mathrm{Sub}_{\not\sim}^{\langle 2 \rangle, \ast}$ is the one from the reorganized article (part II). We are grateful to Takuro Mochizuki for providing the new versions. The notation of \cite{MocCurveTest} is slightly different but similar enough not to create confusion.}, let us denote by
\(
\mathrm{Sub}_{\not\sim}^{\langle 2 \rangle, \ast}(U_p^\circ,U_p)
\)
the set of either pairs $(f,g) $ of non-equivalent subanalytic functions (with $f(p)<g(p) $ pointwise) or of pairs of one subanalytic function together with $\infty$ on the bordered space $(U_p^\circ,U_p)$.

The pre-order $\prec$ from \eqref{eq:preorder} induces a pre-order on the
set $\mathrm{Sub}_{\not\sim}^{\langle 2 \rangle, \ast}(U_p^\circ,U_p)$ by setting $(f_1,g_1) \prec (f_2,g_2)$ if and only if $f_1 \prec f_2 $ and $g_1 \prec g_2$ -- where of course $f \prec \infty $ for all subanalytic $f $. The quotient with respect to the induced equivalence relation is denoted by
\(
\overline{\mathrm{Sub}}_{\not\sim}^{\langle 2 \rangle, \ast}(U_p^\circ,U_p).
\)
Now, both sides of \eqref{eq:KMIs} are associated to finite multi-subsets $(I,\mathfrak{m})$ of $\mathrm{Sub}_{\not\sim}^{\langle 2 \rangle, \ast}(U_p^\circ,U_p)$, i.e.\ finitely many elements $\mathfrak{a}$ of the latter set together with a multiplicity $m_\mathfrak{a} \in \dN$ for each, namely
\begin{itemize}
    \item $(I_\mathrm{left},\mathfrak{m}_\mathrm{left}) $ consisting of the restrictions of the pairs $(f_{\lambda,j},g_{\lambda,j})$ to $(U_p^\circ,U_p)$ and the multiplicities induced by equivalent pairs for the left hand side of \eqref{eq:KMIs}, and
    \item $(I_\mathrm{right},\mathfrak{m}_\mathrm{right}) $ being the multiset of the pairs $(-\mathrm{Re}(ct),\infty)$ with multiplicity $r_c $.
\end{itemize}
Each side of \eqref{eq:KMIs} is constructed in the obvious way from these multi-subsets. If we mimic the notation from \cite{MocCurveTestII} and write\footnote{Note that T.~Mochizuki more generally considers graded multi-sets where an additional grading information refers to a shift of the enhanced ind-sheaves as building blocks for $\mathrm{K}_M(I,\mathfrak{m})$. We don't need these gradings here, since all sheaves are concentrated in one degree.}
\[
\mathrm{K}_{(U_p^\circ,U_p)}(I,\mathfrak{m}) \vcentcolon= \bigoplus_{(f,g)\in I} \dC^\mathrm{E}_{(U_p^\circ,U_p)} \conv (\dC_{f \le t < g})^{m_{(f,g)}},
\]
the isomorphism \eqref{eq:KMIs} after pull-back via the embedding $(U_p^\circ,U_p) \hookrightarrow \bDelta $ reads as
\[
\mathrm{K}_{(U_p^\circ,U_p)}(I_\mathrm{left},\mathfrak{m}_\mathrm{left}) \cong
\mathrm{K}_{(U_p^\circ,U_p)}(I_\mathrm{right},\mathfrak{m}_\mathrm{right}).
\]
Now, due to \cite[§5.2.6]{MocCurveTest}\footnote{In loc.~cit.~the statement is referred to as a direct analogy to Lemma 5.15. In the reorganized article it is worked out in all details in \cite[Lemma 3.29]{MocCurveTestII}.} we conclude that the induced multi-subsets of $\overline{\mathrm{Sub}}_{\not\sim}^{\langle 2 \rangle, \ast}(U_p^\circ,U_p)$ coincide and so do the canonical filtrations.

Hence, we obtain an isomorphism
\begin{equation}\label{eq:Kdecomp}
\pi^{-1} K_{U_p^\circ} \otimes \mathrm{E}\varpi^{-1}\widetilde{H}_K \cong
\pi^{-1} K_{U_p^\circ} \otimes \bigoplus_{c \in C} K^\mathrm{E}_{\bDelta} \conv \big(\dE^{\mathrm{Re}(cu)}_K\big)^{r_c}
\end{equation}
and an induced filtration $F^p_{\mathfrak{a}}(\widetilde{\cL}_{K,p})$ on the stalk $\widetilde{\cL}_{K,p}$ which induces the Stokes filtration on $\widetilde{\cL}_p$ after extension of scalars from $K$ to $\dC$.

For a point $p\in Z$ consider two nearby points $p_1,p_2 \in \varpi^{-1}(\infty)$ on each side of $p$, i.e.\ in the components of a sufficiently small punctured interval at $p$. Then we have canonical isomorphisms of the stalks $\widetilde{\cL}_{p_j} \cong \widetilde{\cL}_p$ as well as for the stalks of the $K $-structure. With respect to the first isomorphisms, the Stokes filtrations on $\widetilde{\cL}$ are related by
\begin{equation}\label{eq:Fp1p2p}
F^p_\mathfrak{a} (\widetilde{\cL}_p) = F^{p_1}_\mathfrak{a} (\widetilde{\cL}_{p_1}) \cap F^{p_2}_\mathfrak{a} (\widetilde{\cL}_{p_2}),
\end{equation}
(note that both stalks on the right hand side are equal if $p$ is no Stokes direction of a pair $(cu,\mathfrak{a})$ of exponential factors, anyway.) Hence, if we define $F^p_\mathfrak{a}(\widetilde{\cL}_{K,p})$ analogously to \eqref{eq:Fp1p2p}, we obtain a filtration also on $\widetilde{\cL}_{K,p}$ inducing the one on $\widetilde{\cL}_p$.

In summary, there exist filtrations $F^p_\mathfrak{a}(\widetilde{\cL}_K)$ on any stalk $p \in \varpi^{-1}(\infty)$ inducing the Stokes filtration after scalar extension from $K$ to $\dC$.

\begin{lemma}\label{lem:gradedLocSys}
The graded objects
\[
\mathrm{Gr}^{F^p_\bullet}_\mathfrak{a}(\widetilde{\cL}_{K,p}) =
F^p_\mathfrak{a}(\widetilde{\cL}_{K,p})/ \sum_{\mathfrak{b} \prec \mathfrak{a}} F^p_\mathfrak{b}(\widetilde{\cL}_{K,p})
\]
glue to a local system of $K$-vector spaces on $\varpi^{-1}(\infty)$.
\end{lemma}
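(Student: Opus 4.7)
The plan is to construct, for each equivalence class $\mathfrak{a}$, a subsheaf $\cF_\mathfrak{a} \subset \widetilde{\cL}_K|_{\varpi^{-1}(\infty)}$ of $K$-vector spaces with stalks $F^p_\mathfrak{a}(\widetilde{\cL}_{K,p})$ as defined in \eqref{eq:Fp1p2p}, and then to verify that
\[
\cG_\mathfrak{a} \vcentcolon= \cF_\mathfrak{a}/\sum_{\mathfrak{b}\prec\mathfrak{a}} \cF_\mathfrak{b}
\]
is locally constant. This requires separate arguments on the arcs of $\varpi^{-1}(\infty)\setminus Z$ and at the Stokes points $p \in Z$, with the latter being the delicate part.

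First I would observe that $\widetilde{\cL}_K|_{\varpi^{-1}(\infty)}$ is itself a local system of $K$-vector spaces. The complex sheaf $\widetilde{\cL}|_{\varpi^{-1}(\infty)}$ is known to be a local system (the standard real-blow-up extension of the local system $\cL$ of solutions of $\rho^+\cH(\alpha;\beta)$), and by the decomposition \eqref{eq:Kdecomp} together with Corollary~\ref{corSheafificationExtension}, its $K$-structure splits compatibly on short arcs into rank-one constant $K$-sheaves, each coming from a summand $\iota_\ast \mathsf{sh}_\bDelta (K^\mathrm{E}_\bDelta \conv K_{-\mathrm{Re}(cu) \le t})$; hence the total restriction to $\varpi^{-1}(\infty)$ is a locally constant $K$-sheaf of rank $\sum_{c\in C} r_c$.

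On each connected component $J$ of $\varpi^{-1}(\infty) \setminus Z$ the pre-order among the classes $[-\mathrm{Re}(cu)]$, $c \in C$, is constant, and the above decomposition gives a splitting $\widetilde{\cL}_K|_J = \bigoplus_{c\in C} \widetilde{\cL}^{(c)}_K|_J$ into locally constant subsheaves of rank $r_c$. I would then set
\[
\cF_\mathfrak{a}|_J \vcentcolon= \bigoplus_{[-\mathrm{Re}(cu)] \prec \mathfrak{a}} \widetilde{\cL}^{(c)}_K|_J,
\]
which makes both $\cF_\mathfrak{a}|_J$ and $\cG_\mathfrak{a}|_J$ manifestly locally constant of well-defined $K$-ranks. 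At a Stokes point $p \in Z$, the stalk $\cF_{\mathfrak{a},p}$ is defined via the intersection \eqref{eq:Fp1p2p}, so that $\cF_\mathfrak{a}$ extends naturally to a subsheaf of $\widetilde{\cL}_K|_{\varpi^{-1}(\infty)}$ across all of $\varpi^{-1}(\infty)$.

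The main obstacle will be to show that the $K$-rank of $\cG_{\mathfrak{a},p}$ at a Stokes point $p \in Z$ equals its generic rank on the two neighboring arcs. For this I would reduce to the complexified picture: $\dC \otimes_K \widetilde{\cL}_K \cong \widetilde{\cL}$, and the filtrations $\dC \otimes_K F^p_\bullet$ coincide with the classical Stokes filtration on $\widetilde{\cL}|_{\varpi^{-1}(\infty)}$ coming from $\SolE{(\Gmu)_\infty}(\rho^+\cH(\alpha;\beta))$. For this Stokes-filtered local system, the fact that the $\mathfrak{a}$-graded pieces form a local system on $\varpi^{-1}(\infty)$ is the classical Deligne--Malgrange--Sabbah theorem (see e.g.\ \cite[Section 5]{MocCurveTest}); in particular $\dim_\dC \mathrm{Gr}^{F^p_\bullet}_\mathfrak{a}(\widetilde{\cL}_p)$ is independent of $p$. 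Since all operations ($\bigoplus$, $\cap$, quotient) involved in the construction commute with the exact faithfully flat extension $\dC \otimes_K (-)$, one obtains $\dim_K \cG_{\mathfrak{a},p} = \dim_\dC \mathrm{Gr}^{F^p_\bullet}_\mathfrak{a}(\widetilde{\cL}_p)$, which is constant in $p$. Combined with the local constancy on $\varpi^{-1}(\infty) \setminus Z$ established above, this promotes $\cG_\mathfrak{a}$ to a local system of $K$-vector spaces on all of $\varpi^{-1}(\infty)$.
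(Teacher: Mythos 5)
Your treatment of the open arcs via \eqref{eq:Kdecomp} and the reduction of the stalk at a Stokes point to the two-sided intersection \eqref{eq:Fp1p2p} match the paper's setup, but the final step at the points of $Z$ has a genuine gap. From the classical statement over $\dC$ and flatness you extract only an equality of stalk dimensions, and then conclude that constant stalk dimension together with local constancy on $\varpi^{-1}(\infty)\smallsetminus Z$ promotes $\cG_\mathfrak{a}$ to a local system. That inference is not valid: local constancy at $p\in Z$ means that the canonical maps induced by the inclusions $F^p_\mathfrak{b}=F^{p_1}_\mathfrak{b}\cap F^{p_2}_\mathfrak{b}\subset F^{p_i}_\mathfrak{b}$,
\[
\Bigl(F^{p_1}_\mathfrak{a}\cap F^{p_2}_\mathfrak{a}\Bigr)\Big/\sum_{\mathfrak{b}\prec\mathfrak{a}}\bigl(F^{p_1}_\mathfrak{b}\cap F^{p_2}_\mathfrak{b}\bigr)\;\longrightarrow\; F^{p_i}_\mathfrak{a}\Big/\sum_{\mathfrak{b}\prec\mathfrak{a}}F^{p_i}_\mathfrak{b},
\]
are isomorphisms, and equal dimensions give neither injectivity nor surjectivity of these particular maps. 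A two-dimensional toy configuration shows the principle you invoke is false: take two exponential factors whose order flips at $p$, so that on one side the filtration is a line $L_1\subset V$ inside $V$, on the other side a line $L_2\subset V$ inside $V$, and at $p$ the two classes are incomparable with $F^p$ given by $L_1$ and $L_2$. All graded stalks, including the one at $p$, are one-dimensional whatever the relative position of $L_1$ and $L_2$; but if $L_1=L_2$ the connecting map above is zero and the graded sheaf is not locally constant at $p$. So the rank count does not exclude precisely the degeneration you need to exclude.

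What is needed at $p\in Z$ --- and what the paper's proof supplies --- is a splitting of the $K$-filtration on a full two-sided neighbourhood of $p$: choose $K$-complements $G^p_{K,\mathfrak{a}}$ of $\sum_{\mathfrak{b}\prec\mathfrak{a}}F^p_\mathfrak{b}(\widetilde{\cL}_{K,p})$ inside $F^p_\mathfrak{a}(\widetilde{\cL}_{K,p})$, complexify them, and use the fact (available over $\dC$ because the graded pieces are already known to form a local system there) that any such pointwise complement splits the Stokes filtration at every point of a small interval around $p$; since the $K$-filtration complexifies to the Stokes filtration, this forces the $G^p_{K,\mathfrak{a}}$ to split the $K$-filtration on that interval, and local constancy of $\cG_\mathfrak{a}$ over $K$ follows. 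Alternatively, you could repair your argument by identifying the connecting maps displayed above, after $\otimes_K\dC$, with the corresponding maps for $\widetilde{\cL}$ (isomorphisms by the classical theorem) and concluding by faithful flatness; but some argument comparing the maps, and not merely the dimensions, is indispensable.
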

\begin{proof}
First let us remark that each interval $I \subsetneq \varpi^{-1}(\infty)$ induces canonical isomorphisms
\begin{equation}\label{eq:stalksiso}
    \widetilde{\cL}_p \cong H^0(I,\widetilde{\cL}) \cong \widetilde{\cL}_q
\end{equation}
for all $p,q \in I$. The same holds for the local system $\widetilde{\cL}_K$.

We know that the claim of the lemma is true over $\dC$, the graded objects associated to the filtrations as $\dC$-sheaves are local systems of $\dC$-vector spaces -- cf.\ e.g.\ \cite[Section 6.1]{DKmicFS} or \cite[Proposition 2.7]{SabStokes}, going back to ideas of Deligne and Malgrange (see \cite[Section IV.2]{Mal7}). Consequently, if one chooses complementary vector spaces $G^p_\mathfrak{a} \subset F^p_{\mathfrak{a}}(\widetilde{\cL}_p)$ for each $p \in \varpi^{-1}(\infty)$ such that the canonical morphism
\[
G^p_\mathfrak{a} \hookrightarrow F^p_{\mathfrak{a}}(\widetilde{\cL}_p) \to \mathrm{Gr}^p_{\mathfrak{a}}(\widetilde{\cL}_p)
\]
is an isomorphism, then each $p$ has an open neighbourhood $I_p$ such that
\(
\widetilde{\cL}_q = \bigoplus_{c\in C} G^p_{[cu]}
\)
is a splitting of the filtration for each $q \in I_p$ where we use the identifications from \eqref{eq:stalksiso}.

In the same way, we can choose complementary $K$-vector spaces $G^p_{K,\mathfrak{a}} \subset F^p_{K,\mathfrak{a}} $ for each $p$. Then $G^p_\mathfrak{a}:=G^p_{K,\mathfrak{a}} \otimes_K \dC$ is a choice as above and we know that for each $p$ the $\dC$-vector spaces  $G^p_\mathfrak{a}$ induce a local splitting of the filtration on $\widetilde{\cL}_q $ for $q$ in a neighbourhood of $p$. Therefore, the same is true for the $K$-vector spaces $G^p_{K,\mathfrak{a}}$. The claim of the lemma easily follows.
\end{proof}

We want to convince ourselves that we can find local splittings of the filtration on $\widetilde{\cL}_K$ over the same intervals as it is the case for $\widetilde{\cL}$.

\begin{lemma}\label{lemma:splittingK}
Suppose we have a splitting $\widetilde{\cL}|_I= \bigoplus_{c\in C} \cG_{I,[cu]}$ of the Stokes filtration over an interval $I \subsetneq \varpi^{-1}(\infty)$. Then there is a splitting
\[
\widetilde{\cL}_K|_I= \bigoplus_{c\in C} \cG_{K,I,[cu]}
\]
over $I$ inducing the given splitting after extension of scalars from $K$ to $\dC$.
\end{lemma}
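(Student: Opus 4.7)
The strategy is to construct the $K$-splitting locally on small arcs inside $I$ by invoking the $K$-decomposition \eqref{eq:Kdecomp}, then to glue using the fact that a $K$-local subsystem of $\widetilde{\cL}_K$ is determined by its scalar extension to $\dC$; the finite exceptional set $Z$ is handled using Lemma \ref{lem:gradedLocSys}.

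First, for each $p\in I\setminus Z$, I would apply the sheafification functor $\mathsf{sh}_{\bDelta}$ to \eqref{eq:Kdecomp} and pull back to the arc $V_p\vcentcolon=U_p\cap\varpi^{-1}(\infty)$, obtaining a direct-sum decomposition
\[
\widetilde{\cL}_K|_{V_p}\;\cong\;\bigoplus_{c\in C}\cG^{(p)}_{K,c}
\]
into $K$-local subsystems of ranks $r_c$, one for each exponential factor $[cu]$. After extension of scalars to $\dC$ this produces a $\dC$-splitting of the Stokes filtration on $\widetilde{\cL}|_{V_p}$; since the $K$-structure $\widetilde{H}_K$ was fixed in \eqref{eq:Kstrsol} so as to recover the sectorial decomposition \eqref{eq:sectorK} after tensoring with $\dC$, both decompositions arise from the same Hukuhara--Turrittin-type asymptotic splitting of $\rho^+\cH(\alpha;\beta)$ and may be arranged to coincide with the restriction $\bigoplus_c\cG_{I,[cu]}|_{V_p}$ of the given splitting.

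Next, I would glue these local data. The elementary observation that for any $K$-subspace $W$ of a $K$-vector space $V$ one has $W=(W\otimes_K\dC)\cap V$ inside $V\otimes_K\dC$, applied stalkwise, shows that the subsystems $\cG^{(p)}_{K,c}$ agree on overlaps $V_p\cap V_{p'}$ (since both complexify to the same $\cG_{I,[cu]}$), so they glue into a $K$-splitting of $\widetilde{\cL}_K|_{I\setminus Z}$. To extend across the finitely many points of $I\cap Z$, I would simply set
\[
\cG_{K,I,[cu]}\vcentcolon=\cG_{I,[cu]}\cap\widetilde{\cL}_K|_I
\]
inside $\widetilde{\cL}|_I$, which on $I\setminus Z$ recovers the subsystem just constructed; Lemma \ref{lem:gradedLocSys} implies that the associated graded is already a $K$-local system on all of $\varpi^{-1}(\infty)$, which forces $\cG_{K,I,[cu]}$ to be a $K$-local subsystem of the expected rank $r_c$ on all of $I$. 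That the direct sum of the $\cG_{K,I,[cu]}$ fills $\widetilde{\cL}_K|_I$ then follows by comparing ranks with the $\dC$-splitting.

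The main obstacle is the identification in the first step between the local $K$-decomposition coming from \eqref{eq:Kdecomp} and the restriction of the prescribed $\dC$-splitting: both are asymptotic decompositions of the same enhanced ind-sheaf, and one needs to invoke the uniqueness results underlying Mochizuki's analysis \cite{MocCurveTest} (already used for Lemma \ref{lem:gradedLocSys}) to guarantee this compatibility. Once that matching is established, the rest of the argument is essentially bookkeeping.
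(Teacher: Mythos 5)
There is a genuine gap, and it sits exactly at the step you yourself flag as "the main obstacle": the claim that the local $K$-decomposition obtained from \eqref{eq:Kdecomp} on an arc $V_p$ "may be arranged to coincide" with the restriction of the prescribed splitting $\bigoplus_{c}\cG_{I,[cu]}$. If this were possible, then $\cG_{I,[cu]}|_{V_p}$ would be the complexification of a $K$-local subsystem, i.e.\ the \emph{given} splitting would locally be defined over $K$ — but an arbitrary splitting of a filtration defined over $K$ need not itself be defined over $K$ (already for $K=\dR$, $V_K=K^2$, $F=Ke_1$: the complement $\dC\cdot(e_2+ie_1)$ is a splitting over $\dC$ with no real model). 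Splittings of a Stokes filtration are non-unique, and no uniqueness statement in \cite{MocCurveTest} identifies a prescribed $\dC$-splitting with one coming from the $K$-structure; Mochizuki's rigidity applies to the induced \emph{filtrations} (this is exactly how \eqref{eq:Kdecomp} is obtained in the paper), not to the choice of complements. Since your gluing on overlaps relies on "both complexify to the same $\cG_{I,[cu]}$", and your extension across $Z$ via $\cG_{K,I,[cu]}\vcentcolon=\cG_{I,[cu]}\cap\widetilde{\cL}_K|_I$ can drop rank for the same reason (in the example above the intersection is $0$), the construction collapses once the matching step is removed. Note also that even without insisting on matching the given splitting, gluing arbitrary local $K$-splittings on overlapping arcs is not automatic, again because splittings are not unique.

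The paper's proof avoids any pointwise matching. It works with $H^0(I,\widetilde{\cL}_K)$, forms $\bigcap_{p\in I}F^p_{[cu]}\bigl(H^0(I,\widetilde{\cL}_K)\bigr)$, and shows that the natural map from this intersection to $H^0\bigl(I,\mathrm{Gr}^F_{K,[cu]}(\widetilde{\cL}_K)\bigr)$ (a $K$-local system by Lemma \ref{lem:gradedLocSys}) is surjective; a $K$-subspace $G_{K,[cu]}$ lifting the graded piece then spreads out to constant subsystems giving the splitting over $I$. The given $\dC$-splitting is used only to prove surjectivity of the corresponding map over $\dC$, and surjectivity descends to $K$ because the $\dC$-map is the scalar extension of the $K$-map. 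In particular, the output is a $K$-splitting of the \emph{same Stokes filtration} over the same interval — which is what the Stokes-matrix theorem needs — rather than a $K$-form of the originally chosen complements, which in general does not exist. If you want to repair your argument, you should abandon the attempt to reproduce the given splitting over $K$ and instead prove existence of some $K$-splitting over $I$, e.g.\ along the lines of the paper's global-sections argument.
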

\begin{proof}
Let us denote by $\mathrm{Gr}^F_{K,[cu]}(\widetilde{\cL}_K)$ the local systems associated to the filtered local system $\widetilde{\cL}_K$ on $\varpi^{-1}(\infty)$ as in Lemma~\ref{lem:gradedLocSys}.

Let $I$ be an interval as in the assumptions. The filtration $F^p_\bullet(\widetilde{\cL}_{K,p})$ induces a filtration on $H^0(I,\widetilde{\cL}_K)$ -- recall the identifications \eqref{eq:stalksiso} and its variant for $\widetilde{\cL}_K$. We have a surjective morphism
\[
F^p_{[cu]}(H^0(I,\widetilde{\cL}_K)) \twoheadrightarrow H^0(I, \mathrm{Gr}^F_{K,[cu]}(\widetilde{\cL}_K))
\]
for each $p \in I$, hence we also get a morphism
\begin{equation}\label{eq:bigcapsurjective}
    \bigcap_{p\in I} F^p_{[cu]}(H^0(I,\widetilde{\cL}_K)) \longrightarrow H^0(I, \mathrm{Gr}^F_{K,[cu]}(\widetilde{\cL}_K)).
\end{equation}
It is easy to see that it suffices to show that the latter is surjective, since then we can find a subspace $G_{K,[cu]} \subset \bigcap_{p\in I} F^p_{[cu]}(H^0(I,\widetilde{\cL}_K))$ such that the induced morphism
\[
G_{K,[cu]} \to H^0(I, \mathrm{Gr}^F_{K,[cu]}(\widetilde{\cL}_K))
\]
is an isomorphism. Then, if we denote by $\cG_{K,I,[cu]}$ the constant local system of $K $-vector spaces over $I$ associated to $G_{K,[cu]}$, these define a splitting of $\widetilde{\cL}_K$ over $I$.

To prove that \eqref{eq:bigcapsurjective} is surjective, observe that the given splitting $\widetilde{\cL}|_I= \bigoplus_{c\in C} \cG_{I,[cu]}$ yields that
\[
H^0(I,\cG_{I,[cu]}) \subset F^p_{[cu]}(H^0(I,\widetilde{\cL}))
\]
for all $p \in I$ and
\(
H^0(I,\cG_{I,[cu]}) \to H^0(I,\mathrm{Gr}^F_{[cu]}(\widetilde{\cL}))
\)
is an isomorphism. Consequently, the natural morphism
\begin{equation}\label{eq:bigcapsurjectiveC}
    \bigcap_{p\in I} F^p_{[cu]}(H^0(I,\widetilde{\cL})) \twoheadrightarrow H^0(I, \mathrm{Gr}^F_{[cu]}(\widetilde{\cL}))
\end{equation}
is surjective. Since \eqref{eq:bigcapsurjectiveC} is the obtained from \eqref{eq:bigcapsurjective} by extension of scalars from $K$ to $\dC$, and since this extension is a right-exact functor, it follows that the morphism \eqref{eq:bigcapsurjective} is surjective as well.
\end{proof}

We can now state and prove the final result of this section.
\begin{theorem}
Assume that we are in the situation of Theorem \ref{theo:MainTheo2} and that  moreover we have $n>m$. Put again $d\vcentcolon=n-m$, let $\rho\colon u \to u^d=q$ be the local ramification map at infinity of degree $d$ and consider the pull-back $\rho^+ \cH(\alpha;\beta)$ of the hypergeometric system.

Then there is a representation of the Stokes matrices for $\rho^+ \cH(\alpha;\beta)$ with values in the field $K$.
\end{theorem}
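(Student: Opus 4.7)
The plan is to combine the previously established $K$-structure on the enhanced solution complex with the two technical lemmas, in order to show that on each of the sectors $S_\pm$ the trivialization of $\widetilde{\cL}$ by the exponential factors lifts to a trivialization of the $K$-local system $\widetilde{\cL}_K$. Once this is achieved, the Stokes matrices, being the transition matrices between two such trivializations on the overlap $\sigma_+\cup\sigma_-$, will automatically be $K$-valued.

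First, I would record the input data: by Theorem \ref{theo:MainTheo2} and formula \eqref{eq:Kstrsol} the enhanced ind-sheaf $\EE\rho^{-1}\SolE{(\Gm)_\infty}(\cH(\alpha;\beta))$ has a $K$-structure $\widetilde{H}_K\in\mathrm{E}^\mathrm{b}(\mathrm{I}K_{(\Gmu)_\infty})$, and the preceding analysis (culminating in the local splitting \eqref{eq:Kdecomp} away from the finite set $Z\subset\varpi^{-1}(\infty)$ and the intersection description \eqref{eq:Fp1p2p} at the exceptional points) produces a filtration $F^p_\bullet(\widetilde{\cL}_{K,p})$ on every stalk of $\widetilde{\cL}_K$ whose scalar extension to $\dC$ coincides with the Stokes filtration on $\widetilde{\cL}$. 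By Lemma \ref{lem:gradedLocSys}, the associated graded pieces glue to local systems of $K$-vector spaces on $\varpi^{-1}(\infty)$.

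Next I would apply Lemma \ref{lemma:splittingK} to the intervals $I_\pm\vcentcolon= \varpi^{-1}(\infty)\cap S_\pm$. Over these intervals we are given the $\dC$-splittings $\psi_\pm$ coming from \eqref{eq:sectorK}, which decompose $\widetilde{\cL}|_{I_\pm}$ into the constant rank-one pieces $\cE_{c,\pm}$ of multiplicity $r_c$, for $c\in C$. The lemma lifts each such $\dC$-splitting to a splitting
\[
\widetilde{\cL}_K|_{I_\pm}\;\cong\;\bigoplus_{c\in C}\,\cG^K_{\pm,[cu]}
\]
of the $K$-filtered local system over $I_\pm$. Since each graded piece $\mathrm{Gr}^{F}_{K,[cu]}(\widetilde{\cL}_K)$ is itself a $K$-local system on the connected interval $I_\pm$, it is constant, and the summand $\cG^K_{\pm,[cu]}$ is isomorphic to the constant $K$-local system on $I_\pm$ of rank $r_c$; hence after choosing $K$-bases in each graded piece we obtain $K$-linear trivializations $\psi^K_\pm$ lifting $\psi_\pm$.

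Finally, on each connected component $\sigma_\varepsilon$ ($\varepsilon=\pm$) of the overlap, the composition $\psi^K_-\circ(\psi^K_+)^{-1}$ is an isomorphism of constant $K$-local systems on $\sigma_\varepsilon$, and its matrix in the chosen $K$-bases represents the corresponding Stokes matrix $\cS_\varepsilon$ after extension of scalars. This gives a representation of $\cS_+$ and $\cS_-$ with entries in $K$, which is the desired statement. The main obstacle in this argument is ensuring that the $K$-splitting provided by Lemma \ref{lemma:splittingK} can be made compatible with the preferred filtration coming from the exponential factors $cu$ rather than from arbitrary subanalytic functions; this is guaranteed by the identification of multi-subsets of $\overline{\mathrm{Sub}}_{\not\sim}^{\langle 2\rangle,\ast}$ via \cite[§5.2.6]{MocCurveTest} used to derive \eqref{eq:Kdecomp}, so no further work is needed beyond invoking the lemmas already established.
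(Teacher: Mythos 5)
Your proposal is correct and follows essentially the same route as the paper: it uses the $K$-structure from Theorem \ref{theo:MainTheo2} together with the $K$-form of the Stokes filtration constructed before Lemma \ref{lem:gradedLocSys}, then applies Lemma \ref{lemma:splittingK} to the intervals $I_\pm=\varpi^{-1}(\infty)\cap S_\pm$ and reads off the Stokes matrices as transition isomorphisms between $K$-splittings. This is exactly the argument given in the paper, so no further comment is needed.
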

\begin{proof}
We pick up the notation from above. By Lemma \ref{lemma:splittingK}, we know that the local system $\widetilde{\cL}_K$ of $K$-vector spaces splits on an interval $I \subsetneq \varpi^{-1}(\infty)$ as a local system of $K$-vector spaces  whenever $\widetilde{\cL}$ splits on $I$.
In our situation, we have splittings of $\widetilde{\cL}$ over two intervals $I_\pm= \varpi^{-1}(\infty)\cap S_\pm$ and the Stokes matrices are the connecting isomorphisms between these splittings on the intersection.
We deduce from Lemma \ref{lemma:splittingK} that the splittings and hence the connecting isomorphisms arise from the same construction over $K$.
\end{proof}
Notice that a related statement for the cases $K=\dR$ and $K=\dQ$ and the assumptions as in Remark \ref{rem:Katz} was found in  \cite[Corollary 6.3 and 6.4]{Hien}, using rather different methods.

\bibliographystyle{amsalpha}

\def\cprime{$'$}
\providecommand{\bysame}{\leavevmode\hbox to3em{\hrulefill}\thinspace}
\providecommand{\MR}{\relax\ifhmode\unskip\space\fi MR }
\providecommand{\MRhref}[2]{%
  \href{http://www.ams.org/mathscinet-getitem?mr=#1}{#2}
}
\providecommand{\href}[2]{#2}

\vspace{1cm}

Davide Barco\\
Fakult\"at f\"ur Mathematik\\
Technische Universit\"at Chemnitz\\
09107 Chemnitz\\
Germany\\
davide.barco@mathematik.tu-chemnitz.de

\vspace{1cm}

\nd
Marco Hien\\
Institut f\"ur Mathematik \\
Universit\"at Augsburg\\
86135 Augsburg\\
Germany

marco.hien@math.uni-augsburg.de
\vspace{1cm}

\nd
Andreas Hohl\\
Universit{\'e} de Paris and Sorbonne Universit{\'e}, CNRS\\
Institut de Math{\'e}matiques de Jussieu-Paris Rive Gauche (IMJ-PRG)\\
75013 Paris\\
France

andreas.hohl@imj-prg.fr
\vspace{1cm}

\nd
Christian Sevenheck\\
Fakult\"at f\"ur Mathematik\\
Technische Universit\"at Chemnitz\\
09107 Chemnitz\\
Germany\\
christian.sevenheck@mathematik.tu-chemnitz.de

\end{document}